\documentclass[10pt,a4paper]{article}

\usepackage{amssymb}
\usepackage{amsfonts}
\usepackage{amsmath}
\usepackage{graphicx}

\newtheorem{theorem}{Theorem}

\newtheorem{corollary}[theorem]{Corollary}
\newtheorem{assumption}[theorem]{Assumption}
\newtheorem{definition}[theorem]{Definition}
\newtheorem{example}[theorem]{Example}

\newtheorem{lemma}[theorem]{Lemma}

\newtheorem{proposition}[theorem]{Proposition}
\newtheorem{remark}[theorem]{Remark}

\newenvironment{proof}[1][Proof]{\textbf{#1.} }{\hfill $\Box$}

\newcommand{\eps}{\varepsilon}
\renewcommand{\epsilon}{\eps}

\begin{document}
\title{Fast Diffusion Limit for Reaction-Diffusion Systems with Stochastic
Neumann Boundary Conditions}
\author{ Wael W. Mohammed$^{1}$ and Dirk Bl\"{o}mker$^{2}$ \\
$^{1}$Department of Mathematics, Faculty of \ Science, \\
Mansoura University, Egypt\\
{\small E-mail: wael.mohammed@mans.edu.eg}\\
$^{2}$Institut f\"{u}r Mathematik\\
Universit\"at Augsburg, Germany\\
{\small E-mail: dirk.bloemker@math.uni-augsburg.de } }
\date{\today }
\maketitle

\begin{abstract}
We consider a class of reaction-diffusion equations with a stochastic
perturbation on the boundary. We show that in the limit of fast diffusion,
one can rigorously approximate solutions of the system of PDEs with
stochastic Neumann boundary conditions by the solution of a suitable
stochastic/deterministic differential equation for the average concentration
that involves reactions only. An interesting effect occurs, if the noise on
the boundary does not change the averaging concentration, but is
sufficiently large. Then surprising additional effective reaction terms
appear.

We focus on systems with polynomial nonlinearities only and give
applications to the two dimensional nonlinear heat equation and the cubic
auto-catalytic reaction between two chemicals.
\end{abstract}

\textbf{Keywords: } Multi-scale analysis, SPDEs, stochastic boundary
conditions, reaction-diffusion equations, fast diffusion limit.

\textbf{Mathematics Subject Classification:} 60H10, 60H15, 35R60, 35K57.

\section{ Introduction}

Stochastic partial differential equations (SPDEs) appear naturally as models
for dynamical systems with respect to random influences. Sometimes in a
complex physical system the noise has an impact not only on the bulk of the
system but on its physical boundary, too. This happens for instance in heat
transfer in a solid in contact with a fluid \cite{Langer}, chemical reactor
theory \cite{Lapidus}, colloid and interface chemistry \cite{Vold}, and the
air-sea interactions on the ocean surface \cite{Peixoto}.

Let $G$ be a bounded sufficiently smooth domain in $\mathbb{R}^{d}$ for $%
d\geq 1,$ which has a smooth boundary $\partial G$. We consider the
following system of stochastic reaction-diffusion equations for $n$ species
with respect to random Neumann boundary conditions%
\begin{eqnarray}
\partial _{t}u &=&\varepsilon ^{-2}\mathcal{A}u+\mathcal{F}(u),\ \ \ \text{%
for}\ t\geq 0,\ x\in G,  \notag \\
\frac{\partial u}{\partial \nu } &=&\sigma _{\epsilon }\partial _{t}W(t)\ \
\ \ \ \ \ \text{for}\ \ t\geq 0,\ x\in \partial G,  \label{eq1a} \\
u(0,x) &=&u_{0}(x)\ \ \ \ \ \ \ \text{for}\ \ x\in G,  \notag
\end{eqnarray}%
with%
\begin{equation*}
\mathcal{A}u=\left( 
\begin{array}{c}
\mathcal{A}_{1}u_{1} \\ 
\vdots \\ 
\mathcal{A}_{n}u_{n}%
\end{array}%
\right) ,\ \ \mathcal{F}(u)=\left( 
\begin{array}{c}
\mathcal{F}_{1}(u_{1},..,u_{n}) \\ 
\vdots \\ 
\mathcal{F}_{n}(u_{1},..,u_{n})%
\end{array}%
\right) \text{ \ and }W(t)=\left( 
\begin{array}{c}
W_{1}(t) \\ 
\vdots \\ 
W_{n}(t)%
\end{array}%
\right) ,
\end{equation*}%
where $\mathcal{A}$ is the diffusion term, the reaction terms $\mathcal{F}%
_{i}(u_{1},u_{2},....,u_{n})$ are polynomials of degree $m_{i}$, $W_{i}$ are
independent $Q$-Wiener process in $\mathcal{L}^{2}(\partial G)$, and $\frac{%
\partial u}{\partial \nu }$\ is the normal derivative of $u$ on $\partial G$%
. The assumption of independence is mainly for convenience of presentation,
as now some terms cancel and the technicalities are less involved.

Sowers \cite{Sowers} investigated multidimensional stochastic reaction
diffusion equation with Neumann boundary conditions and he showed that there
is a unique solution. Da Prato and Zabczyk \cite{PratoZabczyk2,
PratoZabczyk1} discussed the difference between the problems with Dirichlet
and Neumann boundary noises, while \cite{Dir1, Dir2} study random Dirichlet
boundary conditions. Other results are \cite{Mas, BD}.

An very interesting result is by Schnaubelt and Veraar \cite{mildweak}, where regularity of solutions
is studied. Furthermore, mild and weak solutions are shown to coincide.

Recently, Cerrai and Freidlin \cite{Cerrai} considered a class of stochastic
reaction-diffusion equations with Neumann boundary noise. Also, they showed
that when the diffusion rate is much larger than the rate of reaction, it is
possible to replace the SPDE by a suitable one-dimensional stochastic
differential equation. But their result only allowed for weak convergence of
the approximation without any order of the error.

Our aim is to establish rigorously error bounds results for the
fast-diffusion limit for the general class of PDEs with stochastic Neumann
boundary conditions given by (\ref{eq1a}). The error estimates are performed
in an $\mathcal{L}^{p}$-space setting, as we cannot expect solutions to (\ref%
{eq1a}) to be smooth. Especially, at the forced boundary the solution $u$ is
expected to be even unbounded, although it is smoother inside the domain.
See \cite{mildweak} or for Dirichlet boundary \cite{Dir1}.

We consider two cases. The second on is the relatively simple limit, where
the fast diffusion just disappears in the limit, while in the first case
large noise changes the limiting reaction equation. The reason for large
noise might be that both diffusion and noise are enhanced by stirring.

\textit{First case}: If the noise does not change the average ($W_c=0$) but
is sufficiently large ($\sigma _{\epsilon }=\varepsilon ^{-1}$), then the
solutions of Equation (\ref{eq1a}) are well approximated by 
\begin{equation}
u(t,x)\simeq b(t)+\mathcal{Z}^{s}(t,x)+\text{error},  \label{Aprox}
\end{equation}%
where $b(t)\in \mathbb{R}^{n}$ represents the average concentration of the
components of $u$ given in general formulation as a solution of 
\begin{equation}
\partial _{t}b(t)=\mathcal{F}(b(t))+\mathcal{G}(b(t)),  \label{ODE}
\end{equation}%
for some polynomial $\mathcal{G}$\ of degree less than or equal $m-2$
depending on the structure of the noise. The stochastic perturbation $%
\mathcal{Z}^{s}(t,x)$ is defined later in (\ref{E40}). It is an $\varepsilon$%
-dependent fast Ornstein-Uhlenbeck process (OU-process) corresponding to
white noise in the limit $\varepsilon \rightarrow 0$. The index $c$ denotes
the average (i.e., $v_{c}=|G|^{-1}\int_{G}vdx$ which is the projection onto
the constants).

The ODE $\partial _{t}b(t)=\mathcal{F}(b)$\ is the expected result, but due
to noise an additional term of noise induced effective reactions appears. We
illustrate our results using a relatively simple auto-catalytic reaction. For
the result presented we always need a square which averages to a constant in
the limit $\epsilon\to 0$. This is mainly, because we assumed independent
noise terms for each species. In contrast, if the noise terms are dependent,
then any reaction term could lead to an additional effective reaction term
in the limit.

\textit{Second case}: If $W_{c}\neq 0$ and $\sigma _{\epsilon }=1$, then the
solution of Equation (\ref{eq1a}) are well approximated by%
\begin{equation}
u(t,x)=b(t)+\text{error},  \label{Aprox1}
\end{equation}%
and $b$\ is the solution of stochastic ordinary differential equation%
\begin{equation}
\partial _{t}b(t)=\mathcal{F}(b(t))+\partial _{t}\tilde{\beta}(t),
\label{SODE}
\end{equation}%
for some Wiener process $\tilde{\beta}$ in $%
\mathbb{R}
^{n},$ which is essentially the projection of $W$ onto the dominant constant
modes, i.e. the direct impact of the noise on the average. This is the
somewhat expected result, where the reaction-diffusion equation under fast
diffusion is well approximated by the reaction ODE.

As an application of our results, we give some examples from physics
(nonlinear heat equation) and from chemistry (cubic auto-catalytic reaction
between two chemicals according to the rule $A+B\rightarrow 2B$). To
illustrate our results let us focus for a moment on the relatively simple
two dimensional nonlinear heat equation (also called Ginzburg-Landau or
Allen-Cahn), which is partly covered by the setting of \cite{Cerrai}, too. 
\begin{eqnarray}
\partial _{t}u &=&\varepsilon ^{-2}\Delta u+u-u^{3}\ \ \text{for}\ t\geq 0,\
x\in \left[ 0,1\right] ^{2},  \notag \\
\frac{\partial u}{\partial \nu } &=&\sigma _{\epsilon }\partial _{t}W(t)\ \
\ \ \ \ \ \text{for}\ \ t\geq 0,\ x\in \partial \left[ 0,1\right] ^{2}.
\label{Heat1}
\end{eqnarray}%
For the first case we suppose $W_{c}=0$ and $\sigma _{\epsilon }=\varepsilon
^{-1}$, and our main Theorem \ref{thm}\ states that the solution of (\ref%
{Heat1}) is well approximated by (\ref{Aprox}) and $b$ is the solution of%
\begin{equation*}
db=[(1-C_{\alpha ,\lambda })b-b^{3}]dt,
\end{equation*}%
where $C_{\alpha ,\lambda }$ is a constant depending on the noise intensity
parameters $\alpha _{i,k}$\ and the eigenvalues of the operator $\Delta $.

For the second case $W_{c}\neq 0$ and $\sigma _{\epsilon }=1$ our main
Theorem \ref{thm2} states that the solution of (\ref{Heat1}) is of the form (%
\ref{Aprox1}) and $b$ is the solution of%
\begin{equation*}
db=[b-b^{3}]dt+dB,
\end{equation*}%
where $B$\ is a $\mathbb{R}$-valued standard Brownian motion.

The main novelties of this paper are on one hand the explicit error estimate%
\textbf{\ }in terms of high moments of the error, as usually only weak
convergence is treated (see e.g. \cite{Cerrai}), and on the other hand the
observation that large mass-conservative noise has the potential to change
effective reaction equations in the limit of large diffusion.

The paper is organized as follows. Our assumptions and some definitions are
given in the next section. In Section $3$ we derive the fast-diffusion limit
with error terms and present the main theorem. Section $4$ gives bounds for
high non-dominant modes, while Section $5$ provides averaging results over
the fast OU-process. In Section $6$, we give the proof of the approximation
Theorem I and some examples from physics and chemistry as applications of
our results. Finally, we prove the approximation Theorem II and apply this
result to nonlinear heat equation and cubic auto-catalytic reaction between
two chemicals.

\section{Definition and Assumptions}

This section states the precise setting for (\ref{eq1a}) and summarizes all
assumptions necessary for our results. For the analysis we work in the
separable Hilbert space $\mathcal{L}^{2}(G)$ of square integrable functions$%
, $ where $G\subset \mathbb{R}^{d}$ is a bounded domain with sufficiently
smooth boundary $\partial G$ (e.g. Lipschitz), equipped with scalar product $%
\langle \cdot ,\cdot \rangle $ and norm $\Vert \cdot \Vert $.

\begin{definition}
\label{Def.Lin}Define for $i=1,2,....,n$ and diffusion constants $d_{i}>0$%
\begin{equation}
\mathcal{A}_{i}=d_{i}\Delta \text{ }  \label{E101}
\end{equation}%
with%
\begin{equation*}
D(\mathcal{A}_{i})=\left\{ u\in \mathcal{H}^{2}:\left. \partial _{\nu
}u\right\vert _{\partial G}=0\right\} ,
\end{equation*}%
where $\partial _{\nu }u$\ is the normal derivative of $u$ on $\partial G$.
\end{definition}

Let $\left\{ g_{k}\right\} _{k=1}^{\infty }$ be an orthonormal basis of
eigenfunctions of $\mathcal{A}_{i}$ in $\mathcal{L}^{2}(G)$. It is obviously
the same basis for all $i$ with corresponding eigenvalues $%
\{d_{i}\lambda_{k}\}_{k=0}^{\infty }$ depending on $i$ ({cf. Courant and
Hilbert }\cite{CH}). Also, let $\left\{ e_{k}\right\} _{k=1}^{n}$ be the
standard orthonormal basis of $\mathbb{R}^{n}$. Hence, $\left\{
g_{k}e_{i}\right\} $ for $k\in \mathbb{N}_{0}$ and $i=\{1,..,n\}$, is an
orthonormal basis of $\mathcal{A}$ in $\left[ \mathcal{L}^{2}(G)\right] ^{n}$
such that $\mathcal{A}\left( g_{k}e_{i}\right) =-d_{i}\lambda _{k}g_{k}e_{i}.
$

\begin{assumption}
\label{assg} We assume that for all $k\in\mathbb{N}$ 
\begin{equation*}
\|g_k\|_\infty \leq C \lambda_k^{\gamma_1} \quad\text{ for some }\gamma_1
\geq 0 \;. 
\end{equation*}
\end{assumption}

This is true in $\mathbb{R}^2$ for instance on squares, hexagons, and
triangles with $\gamma_1=0$, while the worst case is $\gamma_1=(d-1)/2$
realized for balls and spheres. See \cite{Grie}. This condition might be
relaxed, but we focused in examples mainly on cases with $\gamma_1=0$.

Define 
\begin{equation*}
\mathcal{N} :=\ker \mathcal{A} =\text{span}\{e_{1}g_{0},....,e_{n}g_{0}\}, 
\end{equation*}
where $g_{0}=\left\vert G\right\vert ^{-\frac{1}{2}}$ is a constant and $%
\lambda _{0}=0$. Define $S=\mathcal{N}^{\bot }$ to be the orthogonal
complement of $\mathcal{N}$ in$\ \left[ \mathcal{L}^{2}(G)\right] ^{n}.$
Denote by $P_{c}u=\frac{1}{\left\vert G\right\vert }\int_{G}udx$ the
projection onto $\mathcal{N}$ and define $P_{s}u:=(\mathcal{I}-P_{c})u$ for
the projection onto the orthogonal complement, where $\mathcal{I}$\ is the
identity operator on $\left[ \mathcal{L}^{2}(G)\right] ^{n}.$ We define $%
\mathcal{L}_{n}^{p}:=\left[ \mathcal{L}^{p}(G)\right] ^{n}$.

The operator $\mathcal{A}$ given by Definition \ref{Def.Lin} generates an
analytic semigroup $\{e^{t\mathcal{A}}\}_{t\geq 0}$ (cf. Dan Henry \cite%
{henry} or Pazy \cite{Pazy}), on $\mathcal{L}_{n}^{p}$ for all $p\geq 2$. It
has the following property: There is an $\omega >0$\ such that for all $t>0$
and all $u\in \mathcal{L}_{n}^{p}$%
\begin{equation}
\left\Vert e^{t\mathcal{A}}P_{s}u\right\Vert _{\mathcal{L}_{n}^{p}}\leq
Me^{-\omega t}\left\Vert P_{s}u\right\Vert _{\mathcal{L}_{n}^{p}},
\label{E3}
\end{equation}%
where $\omega $ depends in general on $d_{i}$.

Moreover, we obtain 
\begin{equation}
\left\Vert e^{t\mathcal{A}}u\right\Vert _{\mathcal{L}_{n}^{p}}\leq
M\left\Vert u\right\Vert _{\mathcal{L}_{n}^{p}}.  \label{E3d}
\end{equation}%
Also, we suppose

\begin{assumption}
\label{asscoeff}There is a constant $M>0$ such that for all $t>0$ and $u\in 
\mathcal{L}_{n}^{mp}$%
\begin{equation}
\left\Vert e^{t\mathcal{A}}u\right\Vert _{\mathcal{L}_{n}^{mp}}\leq
M(1+t^{-\alpha })\left\Vert u\right\Vert _{\mathcal{L}_{n}^{p}}  \label{E4d}
\end{equation}%
with $\alpha =\frac{d}{p}\left( \frac{m-1}{m}\right) \in (0,1).$
\end{assumption}

The previous assumption is needed for the existence of the solutions and
global bounds. Equation (\ref{E4d}) follows the Sobolev-embedding of $%
W^{\alpha ,p}$ into $\mathcal{L}^{mp}.$ The main assumption is that the
coefficient is between $0$ and $1$.

Immediate conclusion of Assumption \ref{asscoeff} and Equation (\ref{E3}) is%
\begin{equation}
\left\Vert e^{t\mathcal{A}}P_{s}u\right\Vert _{\mathcal{L}_{n}^{mp}}\leq
M(1+t^{-\alpha })e^{-\omega t}\left\Vert P_{s}u\right\Vert _{\mathcal{L}%
_{n}^{p}},  \label{E4dd}
\end{equation}%
where for simplicity we denote different constants $\omega ,$ $M$ by the same
name.

For the noise we suppose:

\begin{assumption}
\label{Wiener}Let $W=(W_{1},....,W_{n})$ be a collection of $n$ independent
Wiener process on an abstract probability space $(\Omega $, $F$, $\mathbb{P})
$ with a bounded covariance operator $Q_{i}:\mathcal{L}^{2}(\partial
G)\rightarrow \mathcal{L}^{2}(\partial G)$ defined by $Q_{i}f_{k}=\alpha
_{i,k}f_{k}$ for \ $i=1,2,..,n,$ where $\left( \alpha _{i,k}\right) _{k\in 
\mathbb{N}_{0}}$\ is a bounded sequence of real numbers and $\left(
f_{k}\right) _{k\in \mathbb{N}_{0}}$ be any orthonormal basis on $\mathcal{L}%
^{2}(\partial G)$ with $f_{0}\equiv $Constant.\ For $t\geq 0$ we can write $%
W_{i}(t)$ (cf. Da Prato and Zabczyk \cite{Prato}) as%
\begin{equation}
W_{i}(t)=\sum_{k\in \mathbb{N}_{0}}\alpha _{i,k}\beta _{i,k}(t)f_{k}\text{ \
for \ }i=1,2,..,n\text{,}  \label{E4}
\end{equation}%
where $\left( \beta _{i,k}\right) _{k\in \mathbb{N}_{0}}$ are independent,
standard Brownian motions in $\mathbb{R}$. Also, we assume by using the
orthonormal basis $g_{k}$ of $\mathcal{A}_{i}$ in $\mathcal{L}^{2}(G)$ that
for some small $\gamma \in (0,\frac{1}{2})$%
\begin{equation}
\sum_{k,\ell =1}^{\infty }(\lambda _{k}+\lambda _{\ell })^{2\gamma+2\gamma_1
-1}q_{k,\ell }^{i,i}<\infty \text{ for \ }i=1,2,..,n,  \label{e4a}
\end{equation}%
where the covariance $q_{k,\ell }^{i,j}$ is defined by%
\begin{equation}
q_{j,k}^{i,\ell }=\frac{1}{t}\mathbb{E}\left( \tilde{W}_{i,j}\tilde{%
(t)W_{\ell ,k}}(t)\right) =\left\{ 
\begin{array}{c}
0\text{ \ \ \ \ \ \ \ \ \ \ \ \ \ \ \ \ \ \ \ \ if }i\neq \ell , \\ 
\left\langle Q_{i}g_{j},g_{k}\right\rangle _{\mathcal{L}^{2}(\partial G)}%
\text{ \ \ if }i=\ell ,%
\end{array}%
\right.  \label{E10d}
\end{equation}%
with%
\begin{equation}
\tilde{W}_{i,j}=\left\langle W_{i},g_{j}\right\rangle _{\mathcal{L}%
^{2}(\partial G)}.  \label{E6c}
\end{equation}
\end{assumption}

For the nonlinearity we assume

\begin{assumption}
\label{Poly}The nonlinearity $\mathcal{F}$ is a polynomial of at most degree 
$m$. Thus for all $p\geq 1$\ it is bounded by%
\begin{equation}
\left\Vert \mathcal{F}(u)\right\Vert _{\mathcal{L}_{n}^{p}}\leq
C(1+\left\Vert u\right\Vert _{\mathcal{L}_{n}^{pm}}^{m})\text{ for all }u\in 
\mathcal{L}_{n}^{pm}.  \label{E3a}
\end{equation}%
where $m=\max (m_{1},.....,m_{n})$ and the $m_i$ are the degrees of the
polynomials $\mathcal{F}_i$.
\end{assumption}
The following assumption ensures, that the noise is mass-conserving and that
various series converge. This is used in Case 1 only.
\begin{assumption}
\label{W-C}Assume for \ $i=1,2,..,n$ that%
\begin{equation*}
\alpha _{i,0}=0\text{,}
\end{equation*}%
and for any $N\leq m$ and any $\ell\in\{1,\ldots,N\}^n$ 
\begin{equation}
\sum_{k_{1},k_{2},..,k_{N}=1}^{\infty }\Big(\frac{1}{\sum%
\limits_{i=1}^{N}d_{\ell _{i}}\lambda _{k_{i}}}\mathop{\textstyle \prod }%
\limits_{i=1}^{N}\lambda _{k_{i}}^{2\gamma_1-1}\emph{q}_{k_{i},k_{i}}^{\ell
_{i},\ell _{i}}\Big)^{\frac{1}{2}}<\infty .  \label{z10a}
\end{equation}
\end{assumption}
\begin{remark}
Condition (\ref{z10a}) for all $N\geq 1$, for example in case $\gamma_1=0$,  is implied by the
weaker condition%
\begin{equation*}
\sum_{k=1}^{\infty }\frac{\Big(\emph{q}_{k,k}^{i,i}\Big)^{\frac{1}{2}}}{%
\lambda _{k}^{\frac{1}{2}+\frac{1}{2m}}}<\infty .
\end{equation*}%
\end{remark}
We fix a universal $T_0>0$ that is the upper bound for all times involved.

The following two assumptions are used in the two cases separately. They
are usually lemmas that follows directly from the fact that $\mathcal{F}$\
is a polynomial. Note that $T_{1}$ in general depends on the initial
condition $b(0)$.
\begin{assumption}
\label{Amp} Let $b(t)$ in $\mathcal{N}$\ be the solution of (\ref{ODE}).
Suppose there is a stopping time $T_{1}\leq T_{0}$ and a constant $C>0,$
such that 
\begin{equation}
\sup_{\lbrack 0,T_{1}]}|b| \leq C.  \label{E4a}
\end{equation}
\end{assumption}
\begin{assumption}
\label{Amp1} Let $b(t)$ in $\mathcal{N}$\ be the solution of (\ref{SODE}).
Suppose there is a stopping time $T_{1}\leq T_{0}$ and $C>0,$ such that for
sufficiently large $\zeta \gg 1$ and for $\delta >0$ and $\kappa \in (0,%
\frac{1}{m+1})$%
\begin{equation}
\mathbb{P}\Big(\sup_{t\in \lbrack 0,T_{1}]}|b(t)| ^{m-1}\leq C\ln
(\varepsilon ^{-\frac{1}{\zeta }})\Big)\geq 1-\varepsilon ^{\delta \kappa }.
\label{E4b}
\end{equation}%
We remark that $\zeta $\ depends mainly on $T_{0}$ and $\kappa $\ (cf.
Section $8$).
\end{assumption}
For our result we rely on a cut off argument. We consider only solutions $u$
that are not too large, as given by the next definition.
\begin{definition}
\label{stopping time} For a mild solution $u$ of (\ref{eq1a}) we define for $%
\kappa \in (0,\frac{1}{m+1})$ the stopping time $\tau ^{\ast }$\ as%
\begin{equation}
\tau ^{\ast }:=T_{0}\wedge \inf \left\{ t>0:\left\Vert u\right\Vert _{%
\mathcal{L}_{n}^{2m}}>\varepsilon ^{-\kappa }\right\} .  \label{E5}
\end{equation}
\end{definition}
We give error estimates in terms of the following $\mathcal{O}$-notation.
\begin{definition}
\label{def:O} For a real-valued family of processes $\left\{ X_{\varepsilon
}(t)\right\} _{t\geq 0}$\ we say that $X_{\varepsilon }$\ is of order $%
f_{\varepsilon }$, i.e. $X_{\varepsilon }=\mathcal{O}(f_{\varepsilon })$, if
for every $p\geq 1$\ there exists a constant $C_{p}$\ such that%
\begin{equation}
\mathbb{E}\sup_{t\in \lbrack 0,\tau ^{\ast }]}\left\vert X_{\varepsilon
}(t)\right\vert ^{p}\leq C_{p}f_{\varepsilon }^{p}.  \label{E6}
\end{equation}%
We use also the analogous notation for time-independent random variables.
\end{definition}
\begin{definition}
(Multi-Index Notation) Let $\ell \in \mathbb{N}_{0}^{n},$ i.e. $\ell =(\ell
_{1},\ell _{2},......,\ell _{n})$ be a vector of nonnegative integers, $%
u=(u_{1},u_{2},...u_{n})$. Then we define%
\begin{equation*}
\vert \ell\vert =\sum_{i=1}^n\ell _{i}, \quad \ell ! = \prod_{i=1}^n \ell
_{1}!, \quad u^{\ell } =\prod_{i=1}^n u_{i}^{\ell _{i}}, \quad D^{\ell }
=\partial _{u_{1}}^{\ell _{1}}\partial _{u_{2}}^{\ell _{2}}.....\partial
_{u_{n}}^{\ell _{n}} 
\end{equation*}
\end{definition}
%
%
\section{Random boundary conditions}
\begin{definition}
(Neumann map)\label{Def1} The Neumann map $\mathcal{D}:\mathcal{L}%
^{2}(\partial G)\rightarrow \mathcal{H}^{\frac{3}{2}}(G)$\ is a continuous
linear operator. It is defined for $f\in \mathcal{L}^{2}(\partial G)$ as the
solution $\mathcal{D}f$\ of%
\begin{equation*}
(1-\Delta \mathcal{)D}f=0\ \ \text{and}\ \ \partial _{\nu }\left( \mathcal{D}%
f\right) =f\text{ }.
\end{equation*}
\end{definition}
With a slight abuse of notation, we also denote by $\mathcal{D}$ the
extension from $\mathcal{L}_{n}^{2}(\partial G)$ to $\left[ \mathcal{H}^{%
\frac{3}{2}}(G)\right] ^{n}$.
\begin{definition}
Define the stochastic convolution $\mathcal{Z}(t)$ as%
\begin{equation}
\mathcal{Z}(t)=\sigma _{\epsilon }(1-\Delta )\int_{0}^{t}e^{\varepsilon
^{-2}(t-s)\mathcal{A}}\mathcal{D}dW(s).  \label{E6a}
\end{equation}
\end{definition}
The next lemma expands the stochastic convolution $\mathcal{Z}$ as a
Fourier series.
\begin{lemma}
Under Assumption \ref{Wiener} let $\mathcal{Z}$ be the stochastic
convolution defined in (\ref{E6a}), then (with  $\tilde{W}_{i,j}$ defined in (\ref{E6c}))
\begin{equation}
\mathcal{Z}(t)=\sigma _{\epsilon }\sum_{i=1}^{n}\sum_{j=0}^{\infty
}\int_{0}^{t}e^{-\varepsilon ^{-2}d_{i}(t-s)\lambda _{j}}d\tilde{W}%
_{i,j}(s)g_{j} \cdot e_{i}\;.  \label{E6b}
\end{equation}
\end{lemma}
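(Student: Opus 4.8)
The plan is to compute the Fourier coefficients of $\mathcal{Z}(t)$ with respect to the orthonormal basis $\{g_{j}e_{i}\}$ of $\left[\mathcal{L}^{2}(G)\right]^{n}$ and to identify them with the scalar stochastic convolutions in (\ref{E6b}). Since this basis is complete, it suffices to show for each $i\in\{1,\dots,n\}$ and $j\in\mathbb{N}_{0}$ that
\begin{equation*}
\langle\mathcal{Z}(t),g_{j}e_{i}\rangle=\sigma_{\epsilon}\int_{0}^{t}e^{-\varepsilon^{-2}d_{i}(t-s)\lambda_{j}}\,d\tilde{W}_{i,j}(s).
\end{equation*}
I would first use the stochastic Fubini theorem to pull the inner product inside the convolution integral in (\ref{E6a}); because $W$ has independent components and $\{e_{i}\}$ is orthonormal, only the $i$-th component survives, and the whole statement reduces to the deterministic kernel identity: for $\tau>0$, any $h\in\mathcal{L}^{2}(\partial G)$, and $\mathcal{A}_{i}=d_{i}\Delta$,
\begin{equation*}
\langle(1-\Delta)e^{\tau\mathcal{A}_{i}}\mathcal{D}h,\,g_{j}\rangle_{\mathcal{L}^{2}(G)}=e^{-\tau d_{i}\lambda_{j}}\,\langle h,g_{j}\rangle_{\mathcal{L}^{2}(\partial G)} .
\end{equation*}

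The heart of the proof is this identity, and I would establish it in two moves. For the first, set $\psi=e^{\tau\mathcal{A}_{i}}\mathcal{D}h$; since $\{e^{\tau\mathcal{A}_{i}}\}$ is analytic, $\psi\in D(\mathcal{A}_{i})$ for every $\tau>0$, so $\psi\in\mathcal{H}^{2}$ with $\partial_{\nu}\psi=0$. Applying Green's identity to move $(1-\Delta)$ onto $g_{j}$, both boundary contributions vanish because $\partial_{\nu}\psi=0$ and $\partial_{\nu}g_{j}=0$, giving $\langle(1-\Delta)\psi,g_{j}\rangle=\langle\psi,(1-\Delta)g_{j}\rangle=(1+\lambda_{j})\langle\psi,g_{j}\rangle$. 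Using self-adjointness of the semigroup then yields
\begin{equation*}
\langle(1-\Delta)e^{\tau\mathcal{A}_{i}}\mathcal{D}h,g_{j}\rangle=(1+\lambda_{j})e^{-\tau d_{i}\lambda_{j}}\,\langle\mathcal{D}h,g_{j}\rangle_{\mathcal{L}^{2}(G)} .
\end{equation*}
For the second move I must remove the spurious factor $(1+\lambda_{j})$ via the defining property of the Neumann map. Applying Green's identity to the pair $(\mathcal{D}h,g_{j})$ and using $(1-\Delta)\mathcal{D}h=0$, $\partial_{\nu}\mathcal{D}h=h$, $\partial_{\nu}g_{j}=0$ from Definition \ref{Def1}, I get
\begin{equation*}
0=\langle(1-\Delta)\mathcal{D}h,g_{j}\rangle=(1+\lambda_{j})\langle\mathcal{D}h,g_{j}\rangle_{\mathcal{L}^{2}(G)}-\langle h,g_{j}\rangle_{\mathcal{L}^{2}(\partial G)},
\end{equation*}
so that $\langle\mathcal{D}h,g_{j}\rangle_{\mathcal{L}^{2}(G)}=(1+\lambda_{j})^{-1}\langle h,g_{j}\rangle_{\mathcal{L}^{2}(\partial G)}$. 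Substituting this cancels $(1+\lambda_{j})$ and leaves exactly the claimed kernel $e^{-\tau d_{i}\lambda_{j}}\langle h,g_{j}\rangle_{\mathcal{L}^{2}(\partial G)}$.

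To finish I would put $\tau=\varepsilon^{-2}(t-s)$ and $h=dW_{i}(s)$, recognise $\langle dW_{i}(s),g_{j}\rangle_{\mathcal{L}^{2}(\partial G)}=d\tilde{W}_{i,j}(s)$ from (\ref{E6c}), integrate in $s$, and reassemble the sum over $i,j$. Convergence of this series in $L^{2}(\Omega;[\mathcal{L}^{2}(G)]^{n})$ — which simultaneously legitimises the stochastic Fubini step and the interchange of summation with the inner product — follows from the It\^o isometry, since the $(i,j)$ term has second moment $\sigma_{\epsilon}^{2}\,q_{j,j}^{i,i}\int_{0}^{t}e^{-2\varepsilon^{-2}d_{i}(t-s)\lambda_{j}}\,ds$ by (\ref{E10d}), and summability is precisely what Assumption \ref{Wiener}, in particular (\ref{e4a}), guarantees. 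The step I expect to be the main obstacle is the boundary bookkeeping that produces the clean cancellation: justifying that $e^{\tau\mathcal{A}_{i}}\mathcal{D}h$ genuinely lies in $D(\mathcal{A}_{i})$ so that no boundary term survives in the first Green's identity, and dealing with the kernel singularity as $s\to t$ (where $\tau\to0$ and the semigroup smoothing degenerates). Because the final kernel $e^{-\tau d_{i}\lambda_{j}}\langle h,g_{j}\rangle_{\mathcal{L}^{2}(\partial G)}$ is continuous and bounded in $s$ up to $t$, the identity extends to this limit by continuity and the stochastic integral is well defined, but that limiting argument is where the care is required.
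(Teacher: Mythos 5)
Your proposal is correct and follows essentially the same route as the paper: expand $\mathcal{Z}$ in the eigenbasis $\{g_j e_i\}$, move the semigroup onto the eigenfunction, and use Green's identity together with the defining properties $(1-\Delta)\mathcal{D}h=0$, $\partial_\nu \mathcal{D}h=h$ of the Neumann map to turn the interior pairing into the boundary pairing $\langle h,g_j\rangle_{\mathcal{L}^2(\partial G)}$. The only cosmetic difference is that you introduce and then cancel the factor $(1+\lambda_j)$ via the separate identity $\langle\mathcal{D}h,g_j\rangle_G=(1+\lambda_j)^{-1}\langle h,g_j\rangle_{\partial G}$, whereas the paper applies Green's formula directly to $\langle\mathcal{D}h,(1-\Delta)g_j\rangle$; your additional care about $e^{\tau\mathcal{A}_i}\mathcal{D}h\in D(\mathcal{A}_i)$ and the series convergence (which the paper only asserts) is a welcome refinement, not a divergence in method.
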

\begin{proof}
Writing $\mathcal{Z}(t)$ in Fourier expansion yields
\begin{equation*}
\mathcal{Z}(t)=\sum_{i=1}^{n}\sum_{j=0}^{\infty }\left\langle \mathcal{Z}%
(t),e_{i}g_{j}\right\rangle g_{j}\cdot e_{i}.
\end{equation*}%
Using Equation (\ref{E6a})%
\begin{eqnarray*}
\left\langle \mathcal{Z}(t),e_{i}g_{j}\right\rangle _{\mathcal{L}%
_{n}^{2}(G)} &=&\left\langle \mathcal{Z}_{i}(t),g_{j}\right\rangle _{%
\mathcal{L}^{2}(G)} \\
&=&\Big\langle \sigma _{\epsilon }(1-\Delta )\int_{0}^{t}e^{\varepsilon
^{-2}(t-s)d_{i}\Delta }\mathcal{D}dW_{i}(s),g_{j}\Big\rangle _{\mathcal{L}%
^{2}(G)} \\
&=&\sigma _{\epsilon }\int_{0}^{t}e^{-\varepsilon ^{-2}d_{i}(t-s)\lambda
_{j}}\left\langle \mathcal{D}dW_{i}(s),(1-\Delta )g_{j}\right\rangle _{%
\mathcal{L}^{2}(G)} \\
&=&\sigma _{\epsilon }\int_{0}^{t}e^{-\varepsilon ^{-2}d_{i}(t-s)\lambda
_{j}}\{\left\langle \mathcal{D}dW_{i}(s),g_{j}\right\rangle _{\mathcal{L}%
^{2}(G)}-\left\langle \mathcal{D}dW(s),\Delta g_{j}\right\rangle _{\mathcal{L%
}^{2}(G)}\} \\
&=&\sigma _{\epsilon }\int_{0}^{t}e^{-\varepsilon ^{-2}d_{i}(t-s)\lambda
_{j}}\{\left\langle (1-\Delta )\mathcal{D}dW_{i}(s),g_{j}\right\rangle _{%
\mathcal{L}^{2}(G)} \\
&&\ \ \ \ \ \ \ \ \ \ \ \ \ \ \ \ \ \ \ \ \ \ \ \ \ \ \ \ \ +\left\langle
\partial _{\nu }\mathcal{D}dW_{i}(s),g_{j}\right\rangle _{\mathcal{L}%
_{n}^{2}(\partial G)}\} \\
&=&\sigma _{\epsilon }\int_{0}^{t}e^{-\varepsilon ^{-2}d_{i}(t-s)\lambda
_{j}}\left\langle dW_{i}(s),g_{j}\right\rangle _{\mathcal{L}^{2}(G)},
\end{eqnarray*}%
where we used Gauss--Green formula and Definition \ref{Def1}. Hence%
\begin{equation*}
\mathcal{Z}(t)=\sigma _{\epsilon }\sum_{i=1}^{n}\sum_{j=0}^{\infty
}\int_{0}^{t}e^{-\varepsilon ^{-2}d_{i}(t-s)\lambda _{j}}d\tilde{W}%
_{i,j}(s)g_{j}\cdot e_{i}\text{.}
\end{equation*}
It is easy to check, that this series converges in $\mathcal{L}_n^2$.
\end{proof}
%
%
\section{Limiting equation and main theorem}
%
%
In this section we derive formally the limiting equation for (\ref{eq1a})
and we state without proof the main theorem of this paper. First, let us
define the mild solution of Equation (\ref{eq1a}) according to \cite%
{PratoZabczyk2, PratoZabczyk1} as follows
\begin{definition}
For any fixed $\varepsilon >0$, we call a $\mathcal{L}_{n}^{p}$-valued
stochastic process $u$\ a mild solution of\ (\ref{eq1a}) in $\mathcal{L}%
_{n}^{p}$ if for all $t>0$ up to a positive stopping time 
\begin{eqnarray}
u(t) &=&e^{\varepsilon ^{-2}t\mathcal{A}}u(0)+\int_{0}^{t}e^{\varepsilon
^{-2}(t-s)\mathcal{A}}\mathcal{F}(u(s))ds + \mathcal{Z}(t).   \label{E7a}
\end{eqnarray}
\end{definition}
Because we are working with a locally
Lipschitz nonlinearity,
under Assumption \ref{asscoeff}, the existence and uniqueness of solutions is standard, 
once $\mathcal{Z}$ is sufficiently regular. See
e.g. \cite{Prato} and \cite{PratoZabczyk1}.

We can rewrite Equation (\ref{E7a}) by using Equation (\ref{E6b}) as%
\begin{eqnarray}
u(t) &=&e^{\varepsilon ^{-2}t\mathcal{A}}u(0)+\int_{0}^{t}e^{\varepsilon
^{-2}(t-s)\mathcal{A}}\mathcal{F}(u(s))ds  \notag \\
&&+\sigma _{\epsilon }\sum_{i=1}^{n}\sum_{j=0}^{\infty
}\int_{0}^{t}e^{-\varepsilon ^{-2}d_{i}(t-s)\lambda _{j}}d\tilde{W}%
_{i,j}(s)g_{j}\cdot e_{i}\text{,}  \label{E7b}
\end{eqnarray}%
with $\tilde{W}_{i,j}$ defined in (\ref{E6c}).

Now, let us discuss two cases depending on $\sigma _{\epsilon }$ and $\alpha
_{i,0}$ for $i=1,...,n.$
%
%
\subsection{First case: $\protect\sigma _{\protect\epsilon }=\protect%
\varepsilon ^{-1}$ and $\protect\alpha _{i,0}=0$ for $i=1,..,n$}
%
%
In this case Equation (\ref{E7b}) takes the form%
\begin{equation}
u(t)=e^{\varepsilon ^{-2}t\mathcal{A}}u(0)+\int_{0}^{t}e^{\varepsilon
^{-2}(t-s)\mathcal{A}}\mathcal{F}(u(s))ds+\mathcal{Z}^{s}(t)\text{,}
\label{E7c}
\end{equation}%
where%
\begin{equation}
\mathcal{Z}^{s}(t)=\sum_{i=1}^{n}\mathcal{Z}_{i}(t)e_{i}:=\sum_{i=1}^{n}%
\sum_{j=1}^{\infty }\mathcal{Z}_{i,j}(t)g_{j}\cdot e_{i}\text{,}  \label{E40}
\end{equation}%
with%
\begin{equation}
\mathcal{Z}_{i}(t)=\sum_{j=1}^{\infty }\mathcal{Z}_{i,j}(t)g_{j}\text{ \ for 
}i=1,2....,n,  \label{E40b}
\end{equation}%
and%
\begin{equation}
\mathcal{Z}_{i,j}(t)=\varepsilon ^{-1}\int_{0}^{t}e^{-\varepsilon
^{-2}d_{i}(t-s)\lambda _{j}}d\tilde{W}_{i,j}(s).  \label{E40a}
\end{equation}%
In order to derive the limiting equation, we split the solution $u$ into 
\begin{equation}
u(t,x)=a(t)+\psi (t,x),  \label{E7}
\end{equation}
with $a\in \mathcal{N}$ and $\psi \in \mathcal{S}$. Plugging (\ref{E7}) into
(\ref{E7c}) and projecting everything onto $\mathcal{N}$ and $\mathcal{S}$ 
we obtain (with $\mathcal{F}^{c}=P_{c}\mathcal{F}$ and $\mathcal{F}^{s}=P_{s}\mathcal{F%
}$)
\begin{equation}
a(t)=a(0)+\int_{0}^{t}\mathcal{F}^{c}(a+\psi )ds,  \label{E8}
\end{equation}%
and
\begin{equation}
\psi (t)=e^{\varepsilon ^{-2}t\mathcal{A}}\psi
(0)+\int_{0}^{t}e^{\varepsilon ^{-2}(t-\tau )\mathcal{A}_{s}}\mathcal{F}%
^{s}(a+\psi )d\tau +\mathcal{Z}^{s}(t).  \label{E9}
\end{equation}
Formally, we see later (cf.\ Lemma \ref{Lemma1}) that 
 $\psi $ is well approximated by the fast Ornstein-Uhlenbeck process $\mathcal{Z}%
^{s}$. Thus, we can eliminate $\psi $ in (\ref{E8}) by explicitly
averaging over the fast modes.

Now the first main result of this paper is:

\begin{theorem}
\label{thm}(Approximation I) Under Assumptions \ref{assg}, \ref{Wiener}, \ref%
{W-C}, \ref{Poly}, and \ref{Amp}, let $u$ be a solution of (\ref{eq1a}) with
splitting $u=a+\psi $ defined in (\ref{E7}) with the initial condition $%
u(0)=a(0)+\psi (0)$ with $a(0)\in \mathcal{N}$ and $\psi (0)\in S$\ where $%
a(0)$ and $\psi (0)$ are of order one, and $b$ is a solution of (\ref{ODE})
with $b(0)=a(0)$. Then for all $p>0$ and all $\kappa \in (0,\frac{1}{2m+1})$%
, there exist a constant $C>0$ such that%
\begin{equation}
\mathbb{P}\Big(\sup_{t\in [ 0,T_{1}\wedge \tau ^{\ast }] }\Big\|u(t)-b(t)-%
\mathcal{Q}(t)\Big\|_{\mathcal{L}_{n}^{p}}>\varepsilon ^{1-2m\kappa -\kappa }%
\Big)\leq C\varepsilon ^{p},  \label{eq16}
\end{equation}%
where with fast OU-process $\mathcal{Z}^{s}$ defined in (\ref{E40})
\begin{equation}
\mathcal{Q}(t)=e^{\varepsilon ^{-2}t\mathcal{A}_{s}}\psi (0)+\mathcal{Z}%
^{s}(t). \label{eq17}
\end{equation}%
\end{theorem}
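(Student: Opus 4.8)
The plan is to control the two pieces of the decomposition $u-b-\mathcal{Q}=(a-b)+(\psi-\mathcal{Q})$ separately, using the split equations (\ref{E8}) and (\ref{E9}). The fluctuation part $\psi-\mathcal{Q}$ should be negligible because of the fast damping of the non-dominant modes, whereas the bulk of the work is to show that the slow amplitude $a$ is well approximated by $b$; here the effective reaction $\mathcal{G}$ arises precisely from averaging the quadratic part of $\mathcal{F}$ against the fast Ornstein--Uhlenbeck process $\mathcal{Z}^s$. Throughout I work on the stopped interval $[0,T_1\wedge\tau^*]$, where $\|u\|_{\mathcal{L}_n^{2m}}\le\varepsilon^{-\kappa}$ by (\ref{E5}) and $|b|\le C$ by Assumption \ref{Amp}, so that all nonlinear quantities are a priori polynomially controlled in $\varepsilon^{-\kappa}$ via (\ref{E3a}).

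First I would estimate $\psi-\mathcal{Q}$. Subtracting (\ref{eq17}) from (\ref{E9}) leaves only the deterministic Duhamel term $\int_0^t e^{\varepsilon^{-2}(t-\tau)\mathcal{A}_s}\mathcal{F}^s(a+\psi)\,d\tau$. Applying the smoothing estimate (\ref{E4dd}) and substituting $s=\varepsilon^{-2}(t-\tau)$ turns the exponential factor $e^{-\omega\varepsilon^{-2}(t-\tau)}$ into an overall gain of order $\varepsilon^2$ (the integrable singularity $s^{-\alpha}$, $\alpha\in(0,1)$, being harmless), so that $\|\psi-\mathcal{Q}\|_{\mathcal{L}_n^p}\le C\varepsilon^2\sup\|\mathcal{F}^s(a+\psi)\|_{\mathcal{L}_n^p}$. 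The stopping-time bound together with (\ref{E3a}) then gives $\psi-\mathcal{Q}=\mathcal{O}(\varepsilon^{2-m\kappa})$, which is of strictly higher order than the target exponent $1-(2m+1)\kappa$ since $1+(m+1)\kappa>0$. This is essentially the non-dominant-mode bound of Section 4.

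The core is the amplitude equation. From (\ref{E8}) and (\ref{ODE}) with $a(0)=b(0)$,
\[
a(t)-b(t)=\int_0^t\big[\mathcal{F}^c(a+\psi)-\mathcal{F}(b)-\mathcal{G}(b)\big]\,ds.
\]
I would Taylor-expand the polynomial $\mathcal{F}(a+\psi)$ about $b$ in the multi-index notation, writing $a+\psi-b=(a-b)+(\psi-\mathcal{Z}^s)+\mathcal{Z}^s$ and projecting onto $\mathcal{N}$. The zeroth-order term reproduces $\mathcal{F}(b)$; each first-order term carries either the factor $a-b$ (to be absorbed by Gronwall) or $P_c\mathcal{Z}^s=0$ (since $\mathcal{Z}^s\in\mathcal{S}$); and by independence of the noises (Assumption \ref{Wiener}) all mixed second-order products across distinct species average to zero in mean. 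The decisive point, supplied by the averaging results of Section 5, is that for the surviving diagonal quadratic terms $\int_0^t\big(P_c[(\mathcal{Z}^s)^\ell]-c_\ell\big)\,ds$ with $|\ell|=2$ is of strictly positive order in $\varepsilon$, and that $\sum_{|\ell|=2}\tfrac1{\ell!}D^\ell\mathcal{F}(b)\,c_\ell$ reproduces exactly $\mathcal{G}(b)$; here the spectral summability in Assumptions \ref{Wiener} and \ref{W-C}, via (\ref{e4a}) and (\ref{z10a}), guarantees that the limiting constants $c_\ell$ are finite. The fast transient $e^{\varepsilon^{-2}t\mathcal{A}_s}\psi(0)$ contributes only $\mathcal{O}(\varepsilon^2)$ to this time integral and may be discarded, while all terms with $|\ell|\ge3$ integrate to negligible order because odd products of the Gaussian OU process vanish in mean and the higher even ones carry additional powers of $\varepsilon$.

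After this cancellation the remaining integrand is Lipschitz in $a-b$, with constant controlled by $|b|\le C$ and the stopping time, so a Gronwall argument yields $a-b=\mathcal{O}(\varepsilon^{r})$ for some $r>1-(2m+1)\kappa$. Combined with the bound on $\psi-\mathcal{Q}$, this gives $u-b-\mathcal{Q}=\mathcal{O}(\varepsilon^{r})$ in the sense of Definition \ref{def:O}, and finally Markov's inequality at a moment $q$ chosen large relative to $p/(r-(1-(2m+1)\kappa))$ converts the moment bound into the probability estimate (\ref{eq16}) with right-hand side $C\varepsilon^p$. The main obstacle is the averaging step: quantifying the convergence of the time integral of the squared fast OU process to the deterministic constant $c_\ell$, uniformly in $\varepsilon$ and in the relevant $\mathcal{L}^p$ norms, while tracking the precise $\varepsilon$-exponent of the fluctuation. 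This needs careful It\^o and moment estimates together with the series bounds (\ref{e4a}) and (\ref{z10a}), and is where keeping the analysis in $\mathcal{L}^p$ rather than $\mathcal{L}^2$ for the polynomial nonlinearity is most delicate.
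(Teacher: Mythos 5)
Your overall architecture matches the paper's: bound $\psi-\mathcal{Q}$ by the exponentially damped Duhamel term (this is Lemma \ref{Lemma1}, giving $\mathcal{O}(\varepsilon^{2-m\kappa})$ exactly as you say), Taylor-expand the nonlinearity in the dominant-mode equation, average over the fast OU-process, close with Gronwall on $a-b$, and convert moment bounds to the probability estimate by Chebychev with a large exponent. However, there is one substantive error in the averaging step. You claim that among the Taylor terms $P_c\int_0^t \frac{D^{\ell}\mathcal{F}(a)}{\ell!}(\mathcal{Z}^s)^{\ell}\,d\tau$ only $|\ell|=2$ survives, because ``the higher even ones carry additional powers of $\varepsilon$.'' They do not. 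The process $\mathcal{Z}^s$ is an order-one object: its components satisfy $\sup_t\mathbb{E}|\mathcal{Z}_{i,j}(t)|^2\leq q_{j,j}^{i,i}/(2d_i\lambda_j)$ uniformly in $\varepsilon$ (see (\ref{z3a})), so $(\mathcal{Z}^s)^{\ell}$ for every even multi-index averages to a nonzero order-one constant. The paper's Lemma \ref{Lemmach3.4} makes this precise: for all even $\ell$ with $2\leq|\ell|\leq m$ one gets $P_c\int_0^t X(\mathcal{Z}^s)^{\ell}d\tau=C_{\ell}\int_0^t X\,d\tau+\mathcal{O}(\varepsilon^{1-r})$ with $C_{\ell}$ given by (\ref{z10}), and consequently $\mathcal{G}(b)=\sum_{|\ell|=2,4,\ldots}\frac{C_{\ell}}{\ell!}D^{\ell}\mathcal{F}(b)$ (Lemma \ref{Lemma10}), not just the $|\ell|=2$ piece. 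Your identification happens to be correct for the cubic examples ($m=3$, where $|\ell|=4$ exceeds the degree), but the theorem is stated for general polynomial degree $m$, and for $m\geq4$ your argument produces the wrong limiting ODE.

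Two smaller points. First, ``odd products of the Gaussian OU process vanish in mean'' is not by itself enough: one needs the pathwise time integral, uniformly over $[0,T_1\wedge\tau^*]$ and in high moments, to be $\mathcal{O}(\varepsilon^{1-r})$, which is the content of the It\^o-formula computations in Lemma \ref{Lemma4}; you do flag this as the main obstacle, so I count it as identified rather than missed. Second, your error budget is left as ``some $r>1-(2m+1)\kappa$''; the paper's exponent $1-2m\kappa-\kappa$ comes from tracking the Taylor remainder $R^{(1)}$, in which the derivative prefactors $D^{\ell}\mathcal{F}(a+\mathcal{Z}^s)$ cost up to $\varepsilon^{-(m-1)\kappa}$ against the cutoff, multiplied by the $\mathcal{O}(\varepsilon^{2-m\kappa})$ replacement error for $\psi$; this bookkeeping is what fixes the admissible range $\kappa<\frac{1}{2m+1}$ and should be carried out explicitly.
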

We see that the first part of (\ref{eq17}) depending on the initial
condition decays exponentially fast on the time-scale of order $\mathcal{O}%
(\varepsilon ^{2})$.
\begin{corollary}
\label{coll5} If in the previous theorem we additionally assume that
Assumption \ref{asscoeff} holds and $\left\Vert \psi (0)\right\Vert _{%
\mathcal{L}_{n}^{mp}}\leq C$ for some $C>0$, then we can replace $%
T_{1}\wedge \tau ^{\ast }$ in (\ref{eq16}) by $T_{1}$.
\end{corollary}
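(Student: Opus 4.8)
The plan is to show that the strengthened hypotheses force the cut-off stopping time $\tau^*$ not to activate before $T_1$, except on an event of probability $\mathcal{O}(\varepsilon^p)$; since enlarging the failure probability in (\ref{eq16}) by $C\varepsilon^p$ leaves its form unchanged, this is exactly what is needed. First I would record the set inclusion
\[
\Big\{\sup_{[0,T_1]}\|u-b-\mathcal{Q}\|_{\mathcal{L}_n^p}>\varepsilon^{1-2m\kappa-\kappa}\Big\}\subseteq\Big\{\sup_{[0,T_1\wedge\tau^*]}\|u-b-\mathcal{Q}\|_{\mathcal{L}_n^p}>\varepsilon^{1-2m\kappa-\kappa}\Big\}\cup\{\tau^*<T_1\},
\]
which holds because on $\{\tau^*\geq T_1\}$ one has $T_1\wedge\tau^*=T_1$ and the two suprema coincide. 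Theorem \ref{thm} bounds the probability of the first event on the right by $C\varepsilon^p$, so the whole task reduces to proving $\mathbb{P}(\tau^*<T_1)\leq C\varepsilon^p$ for every $p$.

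To control $\mathbb{P}(\tau^*<T_1)$ I would run an a-priori argument on the threshold defining $\tau^*$. By path-continuity of the mild solution in $\mathcal{L}_n^{2m}$, on the event $\{\tau^*<T_1\leq T_0\}$ one has $\|u(\tau^*)\|_{\mathcal{L}_n^{2m}}=\varepsilon^{-\kappa}$, while the estimate of Theorem \ref{thm}, applied with norm parameter $\geq 2m$, remains valid up to the included endpoint $\tau^*$ since there $T_1\wedge\tau^*=\tau^*$. Hence on the good event where that estimate holds,
\[
\varepsilon^{-\kappa}=\|u(\tau^*)\|_{\mathcal{L}_n^{2m}}\leq\|b(\tau^*)\|_{\mathcal{L}_n^{2m}}+\|\mathcal{Q}(\tau^*)\|_{\mathcal{L}_n^{2m}}+\varepsilon^{1-2m\kappa-\kappa}.
\]
The first term is $\mathcal{O}(1)$ because $b\in\mathcal{N}$ is constant and $\sup_{[0,T_1]}|b|\leq C$ by Assumption \ref{Amp}. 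For $\mathcal{Q}=e^{\varepsilon^{-2}t\mathcal{A}_s}\psi(0)+\mathcal{Z}^s$ the semigroup part is bounded by $M\|\psi(0)\|_{\mathcal{L}_n^{2m}}$ via (\ref{E3d}), and the new hypothesis $\|\psi(0)\|_{\mathcal{L}_n^{mp}}\leq C$ (for the relevant $p\geq 2$) together with $\mathcal{L}^{mp}\hookrightarrow\mathcal{L}^{2m}$ on the bounded domain $G$ makes this $\mathcal{O}(1)$; the fast OU-part $\mathcal{Z}^s$ is controlled in $\mathcal{L}_n^{2m}$, with all moments, by the averaging estimates built on the smoothing of Assumption \ref{asscoeff}, giving $\sup_{[0,T_0]}\|\mathcal{Z}^s\|_{\mathcal{L}_n^{2m}}$ at most poly-logarithmic in $\varepsilon^{-1}$. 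Since $1-2m\kappa-\kappa>0$ for $\kappa\in(0,\tfrac{1}{2m+1})$, the right-hand side is $o(\varepsilon^{-\kappa})$, contradicting the equality for all small $\varepsilon$; thus $\{\tau^*<T_1\}$ is contained in the complement of the good event.

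It then remains to bound the probability of this complementary event by $C\varepsilon^p$ for every $p$. I would obtain this from the full-moment content underlying Theorem \ref{thm} in the sense of Definition \ref{def:O}: for each moment $r$ one has $\mathbb{E}\sup_{[0,T_1\wedge\tau^*]}\|u-b-\mathcal{Q}\|_{\mathcal{L}_n^{2m}}^r\leq C_r\varepsilon^{(1-2m\kappa)r}$, and similarly all moments of $\sup_{[0,T_0]}\|\mathcal{Z}^s\|_{\mathcal{L}_n^{2m}}$ are finite; Chebyshev at the threshold $\varepsilon^{1-2m\kappa-\kappa}$ with a sufficiently high $r$ yields $C_p\varepsilon^p$ for arbitrary $p$, and a union bound gives $\mathbb{P}(\tau^*<T_1)\leq C\varepsilon^p$. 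I expect the main obstacle to be precisely the strong-norm control of the fast Ornstein--Uhlenbeck process: one must estimate $\sup_{[0,T_0]}\|\mathcal{Z}^s\|_{\mathcal{L}_n^{2m}}$ in the $\mathcal{L}^{2m}$-norm (not merely in $\mathcal{L}^2$), with all moments and with $\varepsilon$-dependence safely below $\varepsilon^{-\kappa}$, which is exactly where the smoothing Assumption \ref{asscoeff} is indispensable and explains why it is added here but not in Theorem \ref{thm}; the embedding step also pins down the regularity range $p\geq 2$ implicit in the hypothesis on $\psi(0)$. $\Box$
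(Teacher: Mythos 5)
Your overall strategy is the paper's own: reduce the corollary to showing that, off an event of probability $C\varepsilon^{p}$, the cut-off $\tau^{\ast}$ does not fire before $T_{1}$, by producing an a priori bound on the solution in the norm defining $\tau^{\ast}$ that is $o(\varepsilon^{-\kappa})$. The set inclusion in your first paragraph, the decomposition $u=b+\mathcal{Q}+\mathrm{error}$, and the treatment of $b$, of $e^{\varepsilon^{-2}t\mathcal{A}_{s}}\psi(0)$ via the new hypothesis on $\psi(0)$, and the final Chebyshev/union bound all match the paper's proof (which phrases the same argument as $\Omega^{\ast}\subseteq\{\tau^{\ast}>T_{1}\}$).

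There is, however, a genuine gap in the pivotal step ``the estimate of Theorem \ref{thm}, applied with norm parameter $\geq 2m$, remains valid up to the included endpoint.'' The error $u-b-\mathcal{Q}$ contains the Duhamel term $\varepsilon^{-\rho}\int_{0}^{t}e^{\varepsilon^{-2}(t-\tau)\mathcal{A}_{s}}\mathcal{F}^{s}(u)\,d\tau$, and the only way Lemma \ref{Lemma1} controls this in $\mathcal{L}_{n}^{q}$ is through $\Vert\mathcal{F}^{s}(u)\Vert_{\mathcal{L}_{n}^{q}}\leq C(1+\Vert u\Vert_{\mathcal{L}_{n}^{qm}}^{m})$; taking $q=2m$ would require $\Vert u\Vert_{\mathcal{L}_{n}^{2m^{2}}}\leq\varepsilon^{-\kappa}$, which the stopping time (defined through $\Vert u\Vert_{\mathcal{L}_{n}^{2m}}$ only) does not provide. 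So Theorem \ref{thm} cannot simply be re-invoked in the stronger norm. This is exactly why Assumption \ref{asscoeff} is added in the corollary: the paper bounds $\Vert\mathcal{F}^{s}(u)\Vert_{\mathcal{L}_{n}^{p}}$ by $\Vert u\Vert_{\mathcal{L}_{n}^{pm}}^{m}\leq\varepsilon^{-m\kappa}$ with $pm=2m$ and then uses the smoothing estimate (\ref{E4dd}) of the semigroup from $\mathcal{L}_{n}^{p}$ into $\mathcal{L}_{n}^{mp}$ (integrable singularity $t^{-\alpha}$, $\alpha<1$) to close the loop and recover an $\mathcal{L}_{n}^{mp}$ bound of order $\varepsilon^{2-m\kappa}$ for the convolution. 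Relatedly, you locate the need for Assumption \ref{asscoeff} in the $\mathcal{L}^{2m}$ control of the fast OU-process; that is not where it is used: Lemma \ref{Lemma2} already gives $\mathbb{E}\sup_{[0,T_{0}]}\Vert\mathcal{Z}^{s}\Vert_{\mathcal{L}_{n}^{p}}^{p}\leq C\varepsilon^{-\kappa_{0}}$ for every $p$ using only Assumptions \ref{assg} and \ref{Wiener} (and the bound is $\varepsilon^{-\kappa_{0}}$, not polylogarithmic, which still suffices since $\kappa_{0}<\kappa$). Once the Duhamel term is treated with (\ref{E4dd}), your contradiction argument at $t=\tau^{\ast}$ and the final probability estimate go through and reproduce the paper's conclusion.
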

\begin{remark}
In case of Corollary \ref{coll5} we can bound the error even in $\mathcal{L}%
_{n}^{pm}.$
\end{remark}
%
%
\subsection{Second case $\protect\sigma _{\protect\epsilon }=1$ and $\protect%
\alpha _{i,0}\neq 0$ for $i=1,..,n$}
%
%
In this case (\ref{E7b}) takes the form%
\begin{eqnarray}
u(t) &=&e^{\varepsilon ^{-2}t\mathcal{A}}u(0)+\int_{0}^{t}e^{\varepsilon
^{-2}(t-s)\mathcal{A}}\mathcal{F}(u(s))ds  \notag \\
&&+\sum_{i=1}^{n}\sum_{j=0}^{\infty }\int_{0}^{t}e^{-\varepsilon
^{-2}d_{i}(t-s)\lambda _{j}}d\tilde{W}_{i,j}(s)g_{j}\cdot e_{i}\text{.}
\label{E1}
\end{eqnarray}%
Again (cf.\  (\ref{E7})) 
we split the solution $u$ into
$u(t,x)=a(t)+\varepsilon \psi (t,x).$
Plugging (\ref{E7}) into (\ref{E1}) and projecting everything onto $%
\mathcal{N}$ and $\mathcal{S}$ yields%
\begin{equation}
a(t)=a(0)+\int_{0}^{t}\mathcal{F}^{c}(a+\varepsilon \psi )ds+\sum_{i=1}^{n}%
\tilde{W}_{i,0}(t) g_{0} \cdot e_{i},  \label{E10a}
\end{equation}
and
\begin{equation}
\psi (t)=e^{\varepsilon ^{-2}t\mathcal{A}}\psi (0)+\frac{1}{\varepsilon }%
\int_{0}^{t}e^{\varepsilon ^{-2}(t-\tau )\mathcal{A}_{s}}\mathcal{F}%
^{s}(a+\varepsilon \psi )d\tau +\mathcal{Z}^{s}(t),  \label{E10}
\end{equation}
where $\mathcal{Z}^{s}(t)$ was defined in (\ref{E40}). We write 
(\ref{E10a}) as%
\begin{equation*}
a_{i}(t)=a_{i}(0)+\int_{0}^{t}\mathcal{F}_{i}^{c}(a+\varepsilon \psi )ds+%
\tilde{W}_{i,0}(t)g_{0}\text{ \ for }i=1,2...,n.
\end{equation*}
Now, applying Taylor's expansion to the function $\mathcal{F}_{i}^{c}:%
\mathcal{L}^{2}(G)\rightarrow \mathbb{R},$\ yields the following stochastic limiting
equation with error%
\begin{equation}
a_{i}(t)=a_{i}(0)+\int_{0}^{t}\mathcal{F}_{i}(a)ds+\tilde{W}%
_{i,0}(t)g_{0}+R_{i}^{(2)}(t),  \label{Ampl2}
\end{equation}%
where%
\begin{equation}
R_i^{(2)}(t)=\sum_{|\ell| \geq 1}P_{c}\int_{0}^{t}\frac{D^{\ell }\mathcal{F}%
_{i}(a)}{\ell !}\mathcal{(}\varepsilon \psi )^{\ell }d\tau =\mathcal{O}%
(\varepsilon ^{1-}).  \label{E11b}
\end{equation}%
The second main result of this paper is:
\begin{theorem}
\label{thm2}(Approximation II) Under Assumptions \ref{assg}, \ref{Wiener}, %
\ref{Poly} and \ref{Amp1}, let $u$ be a solution of (\ref{eq1a}) with
splitting $u=a+\varepsilon \psi $ defined in (\ref{E7}) with the initial
condition $u(0)=a(0)+\varepsilon \psi (0)$ with $a(0)\in \mathcal{N}$ and $%
\psi (0)\in S $\ where $a(0)$ and $\psi (0)$ are of order one, and $b$ is a
solution of (\ref{SODE}) with $b(0)=a(0)$. Then for all $p>0,\ $for
sufficiently large $\zeta \gg 1$ and all $\kappa \in (0,\frac{1}{m+2})$,
there exists $C>0$ such that%
\begin{equation}
\mathbb{P}\Big(\sup_{t\in [ 0,T_{1}\wedge \tau ^{\ast }] }\|u(t)-b(t)\|_{%
\mathcal{L}_{n}^{p}}>\varepsilon ^{1-(m+2)\kappa }\Big)\leq C\varepsilon
^{\delta \kappa }.  \label{eq16a}
\end{equation}
\end{theorem}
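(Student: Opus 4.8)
The plan is to control the error via the splitting $u-b=(a-b)+\varepsilon\psi$ from (\ref{E7}), bounding the two pieces separately on $[0,T_{1}\wedge\tau^{\ast}]$. On this interval the cut-off $\tau^{\ast}$ keeps $\Vert u\Vert_{\mathcal{L}_{n}^{2m}}\leq\varepsilon^{-\kappa}$, while Assumption \ref{Amp1} keeps $\sup|b|^{m-1}\lesssim\ln(\varepsilon^{-1/\zeta})$ with probability at least $1-\varepsilon^{\delta\kappa}$.

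First I would bound the fast component $\varepsilon\psi$ from its mild equation (\ref{E10}). The initial-data term is harmless by the exponential stability (\ref{E3}), and the fast Ornstein--Uhlenbeck term $\mathcal{Z}^{s}$ of (\ref{E40}) has $\mathcal{O}(1)$ moments by the averaging estimates of Section 5. The only delicate term is the nonlinear integral with its singular prefactor $\varepsilon^{-1}$; here I would invoke (\ref{E4dd}) and note that, after the substitution $\sigma=\varepsilon^{-2}(t-\tau)$, the time integral $\int_{0}^{t}(1+(\varepsilon^{-2}(t-\tau))^{-\alpha})e^{-\omega\varepsilon^{-2}(t-\tau)}\,d\tau$ is $\mathcal{O}(\varepsilon^{2})$ because $\alpha<1$, so one power of $\varepsilon$ is left to beat the prefactor. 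Together with the polynomial bound (\ref{E3a}) and $\Vert u\Vert_{\mathcal{L}_{n}^{2m}}\leq\varepsilon^{-\kappa}$ this shows $\varepsilon\psi=\mathcal{O}(\varepsilon^{1-m\kappa})$.

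Next I would treat $a-b$. Writing (\ref{SODE}) in integrated form and subtracting (\ref{Ampl2}), the constant-mode noise cancels exactly, since $\tilde\beta_{i}=g_{0}\tilde W_{i,0}$ is precisely the projection of $W$ onto $\mathcal{N}$, and there remains
\begin{equation*}
a(t)-b(t)=\int_{0}^{t}\big(\mathcal{F}(a)-\mathcal{F}(b)\big)\,ds+R^{(2)}(t).
\end{equation*}
Because $a$ is spatially constant and $\psi\in\mathcal{S}$ has vanishing average, the linear ($|\ell|=1$) contribution to $R^{(2)}$ in (\ref{E11b}) drops out, leaving a remainder quadratic in $\varepsilon\psi$ and hence $\mathcal{O}(\varepsilon^{1-})$. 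Since $\mathcal{F}$ is a polynomial (Assumption \ref{Poly}) it is locally Lipschitz, so Gronwall's lemma gives $\sup|a-b|\leq e^{LT_{1}}\sup|R^{(2)}|$ with $L\lesssim\sup(|a|^{m-1}+|b|^{m-1})$.

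The main obstacle is this Gronwall factor: as $b$ solves a stochastic ODE, it is only logarithmically bounded, and a crude stopping-time bound $|a|\lesssim\varepsilon^{-\kappa}$ would make $e^{LT_{1}}$ super-polynomially large. I would therefore run a continuation (bootstrap) argument that keeps $a$ within the target distance of $b$, so that $|a|$ inherits the logarithmic bound of Assumption \ref{Amp1}; then $L\lesssim\ln(\varepsilon^{-1/\zeta})$ and $e^{LT_{1}}\lesssim\varepsilon^{-CT_{1}/\zeta}$, which for $\zeta\gg1$ is an arbitrarily small negative power of $\varepsilon$ --- this is exactly the role of the large parameter $\zeta$. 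Bookkeeping the $\varepsilon^{-\kappa}$ stopping-time factors through the degree-$\leq m$ nonlinearity, together with this slack, produces the exponent $1-(m+2)\kappa$, the constraint $\kappa<\tfrac{1}{m+2}$ guaranteeing positivity. Finally I would collect $u-b=(a-b)+\varepsilon\psi=\mathcal{O}(\varepsilon^{1-(m+2)\kappa})$ in the sense of Definition \ref{def:O} and pass to the probability estimate (\ref{eq16a}) by Chebyshev's inequality: the moment bound contributes $C\varepsilon^{p}$ for arbitrary $p$, while the failure of the event in Assumption \ref{Amp1} contributes the dominant $C\varepsilon^{\delta\kappa}$.
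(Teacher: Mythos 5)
Your proposal is correct and follows essentially the same route as the paper: the splitting $u-b=(a-b)+\varepsilon\psi$, the bound on $\varepsilon\psi$ via the mild formulation (Lemma \ref{Lemma1} with $\rho=1$ together with Lemma \ref{Lemma2}), the remainder $R^{(2)}$ for $a-b$, the Gronwall/continuation argument in which the logarithmic bound of Assumption \ref{Amp1} yields a factor $\varepsilon^{-cT_0/\zeta}$ that is absorbed into the exponent by taking $\zeta$ large, and finally the Chebyshev step in which the moment bounds give $C\varepsilon^{p}$ while the failure probability in Assumption \ref{Amp1} contributes the dominant $C\varepsilon^{\delta\kappa}$. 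Your additional observation that the $|\ell|=1$ term of $R^{(2)}$ vanishes because $P_c\psi=0$ is correct but not exploited by the paper, which simply bounds that term directly to obtain $R^{(2)}=\mathcal{O}(\varepsilon^{1-m\kappa})$ in Lemma \ref{Lemma11}.
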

In our examples if we assume $\mathbb{E}\exp\{ c\delta \vert b(0)\vert
^{m-1}\} \leq C$ for some suitable $c>0$ and for one $\delta >0,$\ then
Assumption \ref{Amp1}\ is true. See Section $8.1$.
\begin{corollary}
\label{coll5a} If in the previous theorem additionally Assumption \ref%
{asscoeff} holds and $\Vert \psi (0)\Vert _{\mathcal{L}_{n}^{mp}}\leq C$ for 
$C>0$, then we can replace $T_{1}\wedge \tau ^{\ast }$ in (\ref{eq16a}) by $%
T_{1}$.
\end{corollary}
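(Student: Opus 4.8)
The plan is to establish the corollary without re-running the approximation argument: under the two extra hypotheses I will show that the cut-off in (\ref{E5}) never activates before $T_{1}$, i.e. that $\tau ^{\ast }\geq T_{1}$ holds on an event of probability at least $1-C\varepsilon ^{\delta \kappa }$. On that event $T_{1}\wedge \tau ^{\ast }=T_{1}$, so the estimate (\ref{eq16a}) of Theorem \ref{thm2} instantly becomes the asserted estimate on all of $[0,T_{1}]$. The entire argument is a continuity (bootstrap) argument carried out in the same norm $\mathcal{L}_{n}^{2m}$ that defines $\tau ^{\ast }$.

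The first step is to upgrade the error bound of Theorem \ref{thm2} from $\mathcal{L}_{n}^{p}$ to the stronger norm $\mathcal{L}_{n}^{2m}$. Writing $u-b=(a-b)+\varepsilon \psi $, the dominant-mode part $a-b$ lies in $\mathcal{N}$ and is therefore spatially constant, so all its $\mathcal{L}_{n}^{p}$ norms are equivalent and (\ref{eq16a}) already controls it in $\mathcal{L}_{n}^{2m}$ by $\varepsilon ^{1-(m+2)\kappa }$. It remains to bound $\varepsilon \psi $ in $\mathcal{L}_{n}^{2m}$ on $[0,T_{1}\wedge \tau ^{\ast }]$ from the mild equation (\ref{E10}), and this is exactly where the two new hypotheses enter. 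The initial layer $e^{\varepsilon ^{-2}t\mathcal{A}_{s}}\psi (0)$ is handled by the exponential decay (\ref{E3}) directly in $\mathcal{L}_{n}^{2m}$; the hypothesis $\Vert \psi (0)\Vert _{\mathcal{L}_{n}^{mp}}\leq C$ is needed precisely here, since it lets us avoid the non-integrable factor $t^{-\alpha }$ at $t=0$ that the smoothing estimate would otherwise force if $\psi (0)$ were only known in $\mathcal{L}_{n}^{2}$. The nonlinear convolution is estimated with the smoothing bound (\ref{E4dd}) of Assumption \ref{asscoeff}: substituting $r=\varepsilon ^{-2}(t-\tau )$ turns $\frac{1}{\varepsilon }\int_{0}^{t}\big(1+(\varepsilon ^{-2}(t-\tau ))^{-\alpha }\big)e^{-\varepsilon ^{-2}\omega (t-\tau )}\,d\tau $ into $\varepsilon \int_{0}^{\varepsilon ^{-2}t}(1+r^{-\alpha })e^{-\omega r}\,dr$, a convergent integral because $\alpha <1$, while Assumption \ref{Poly} gives $\Vert \mathcal{F}^{s}(u)\Vert _{\mathcal{L}_{n}^{2}}\leq C(1+\Vert u\Vert _{\mathcal{L}_{n}^{2m}}^{m})\leq C(1+\varepsilon ^{-m\kappa })$ up to $\tau ^{\ast }$; the fast term $\mathcal{Z}^{s}$ is bounded in $\mathcal{L}_{n}^{2m}$ by the moment estimates of Section $5$ under the summability (\ref{e4a}). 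Collecting these, $\Vert \varepsilon \psi \Vert _{\mathcal{L}_{n}^{2m}}$ is at most of order $\varepsilon $ up to logarithmic factors, so $\Vert u-b\Vert _{\mathcal{L}_{n}^{2m}}$ remains of order $\varepsilon ^{1-(m+2)\kappa }$.

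The core step is the contradiction argument. On the intersection of the good event of Theorem \ref{thm2} with the event of Assumption \ref{Amp1}, which has probability at least $1-C\varepsilon ^{\delta \kappa }$, every time $t\leq T_{1}\wedge \tau ^{\ast }$ satisfies
\[
\Vert u(t)\Vert _{\mathcal{L}_{n}^{2m}}\leq \Vert b(t)\Vert _{\mathcal{L}_{n}^{2m}}+\Vert u(t)-b(t)\Vert _{\mathcal{L}_{n}^{2m}}\leq |G|^{\frac{1}{2m}}\,|b(t)|+\varepsilon ^{1-(m+2)\kappa }.
\]
Since $b\in \mathcal{N}$ is spatially constant, Assumption \ref{Amp1} yields $|b(t)|\leq \big(C\ln (\varepsilon ^{-1/\zeta })\big)^{1/(m-1)}$, a purely logarithmic quantity, so the right-hand side is strictly below $\varepsilon ^{-\kappa }$ for all small $\varepsilon $. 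Suppose now $\tau ^{\ast }<T_{1}$; because $T_{1}\leq T_{0}$ the infimum in (\ref{E5}) is not truncated by $T_{0}$, so by (right-)continuity of $t\mapsto \Vert u(t)\Vert _{\mathcal{L}_{n}^{2m}}$ we would have $\Vert u(\tau ^{\ast })\Vert _{\mathcal{L}_{n}^{2m}}\geq \varepsilon ^{-\kappa }$, contradicting the displayed bound evaluated at $t=\tau ^{\ast }=T_{1}\wedge \tau ^{\ast }$. Hence $\tau ^{\ast }\geq T_{1}$ on this event, which is exactly what lets us replace $T_{1}\wedge \tau ^{\ast }$ by $T_{1}$ in (\ref{eq16a}).

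I expect the main obstacle to be the higher-norm control of the second paragraph, in particular obtaining uniform $\mathcal{L}_{n}^{2m}$ bounds on the fast Ornstein--Uhlenbeck process $\mathcal{Z}^{s}$ and on the $\varepsilon ^{-1}$-amplified nonlinear convolution up to $\tau ^{\ast }$. The delicate point is that the prefactor $\varepsilon ^{-1}$ in (\ref{E10}) is only compensated after integrating the smoothing kernel, so one must check that the $t^{-\alpha }$ singularity and the exponential decay combine to yield the saving factor $\varepsilon $ rather than a diverging constant; here the restriction $\alpha \in (0,1)$ from Assumption \ref{asscoeff} is essential. One should also verify the continuity of $t\mapsto \Vert u(t)\Vert _{\mathcal{L}_{n}^{2m}}$ used to pin the value at $\tau ^{\ast }$, which follows from the mild formulation (\ref{E7a}) together with the regularity furnished by Assumption \ref{asscoeff}.
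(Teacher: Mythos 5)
Your proposal is correct and follows essentially the same route as the paper: the paper proves Corollary \ref{coll5a} by the ``same steps'' as Corollary \ref{coll5}, namely bounding $\psi$ in $\mathcal{L}_{n}^{mp}$ from the mild formulation (\ref{E10}) via the decay estimate (\ref{E3}) for $e^{\varepsilon^{-2}t\mathcal{A}}\psi(0)$, the smoothing bound (\ref{E4dd}) combined with the cut-off $\Vert u\Vert_{\mathcal{L}_n^{2m}}\leq\varepsilon^{-\kappa}$ for the $\varepsilon^{-1}$-amplified convolution, and Lemma \ref{Lemma2} for $\mathcal{Z}^{s}$, and then deducing from the boundedness of $a$ the inclusion $\overset{\ast\ast}{\Omega}\subseteq\{\tau^{\ast}>T_{1}\}$ --- exactly your continuity/contradiction step, just stated as a set inclusion. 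The only cosmetic discrepancies are that the $\mathcal{Z}^{s}$ moment bound is Lemma \ref{Lemma2} of Section $4$ rather than Section $5$, and that the loss in your $\Vert\varepsilon\psi\Vert_{\mathcal{L}_n^{2m}}$ bound is a factor $\varepsilon^{-\kappa_{0}}$ (arbitrarily small power, via $\kappa_{0}<\kappa$) rather than logarithmic, neither of which affects the argument.
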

The sufficiently large $\zeta$ depends mainly on $\kappa$ and $T_0$.


\section{Bounds for the high modes}


Let us summarize Equations (\ref{E9}) and (\ref{E10}) for $\rho\in\{0,1\}$ by 
\begin{equation}
\psi (t)=e^{\varepsilon ^{-2}t\mathcal{A}}\psi (0)+\varepsilon ^{-\rho
}\int_{0}^{t}e^{\varepsilon ^{-2}(t-\tau )\mathcal{A}_{s}}\mathcal{F}%
^{s}(a+\varepsilon \psi )d\tau +\mathcal{Z}^{s}(t).  \label{E200}
\end{equation}
In the first lemma of this section, we see that $\psi$
is well approximated by the fast Ornstein-Uhlenbeck process 
$\mathcal{Z}^{s}$ (cf. (\ref{E40})).
\begin{lemma}
\label{Lemma1}Under Assumption \ref{Poly}, there is a constant $C>0$ such
that for $p\geq 1$ and $\kappa >0$ from the definition of $\tau ^{\ast }$%
\begin{equation}
\mathbb{E}\sup_{t\in [ 0,\tau ^{\ast }] }\Big\|\psi (t)-e^{\varepsilon ^{-2}t%
\mathcal{A}}\psi (0)-\mathcal{Z}^{s}(t)\Big\|_{\mathcal{L}_{n}^{p}}^{p}\leq
C\varepsilon ^{2p-p\rho -mp\kappa }.  \label{E13}
\end{equation}
\end{lemma}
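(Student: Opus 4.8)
The plan is to set $R(t) := \psi(t) - e^{\varepsilon^{-2}t\mathcal{A}}\psi(0) - \mathcal{Z}^s(t)$ and observe that by subtracting the two deterministic/stochastic contributions from the mild-equation \eqref{E200} for $\psi$, the initial-data term and the fast OU-process cancel, leaving the single convolution term
\[
R(t) = \varepsilon^{-\rho}\int_0^t e^{\varepsilon^{-2}(t-\tau)\mathcal{A}_s}\, \mathcal{F}^s\big(a(\tau)+\varepsilon\psi(\tau)\big)\, d\tau .
\]
So the whole lemma reduces to estimating this one convolution in $\mathcal{L}_n^p$, uniformly up to the stopping time $\tau^*$. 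The point is that the fast semigroup contracts on the range of $P_s$ at rate $\varepsilon^{-2}\omega$, which is what will beat the $\varepsilon^{-\rho}$ prefactor and produce the claimed smallness.

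First I would take the $\mathcal{L}_n^p$-norm inside the integral and apply the smoothing/decay bound \eqref{E3}, $\|e^{t\mathcal{A}}P_s v\|_{\mathcal{L}_n^p}\leq Me^{-\omega t}\|P_s v\|_{\mathcal{L}_n^p}$, with $t$ replaced by $\varepsilon^{-2}(t-\tau)$. This gives
\[
\|R(t)\|_{\mathcal{L}_n^p} \leq M\varepsilon^{-\rho}\int_0^t e^{-\varepsilon^{-2}\omega(t-\tau)}\,\big\|\mathcal{F}^s(a+\varepsilon\psi)(\tau)\big\|_{\mathcal{L}_n^p}\, d\tau .
\]
Next I would control the nonlinearity: since $\mathcal{F}^s = P_s\mathcal{F}$ and $P_s$ is bounded, Assumption \ref{Poly} gives $\|\mathcal{F}^s(a+\varepsilon\psi)\|_{\mathcal{L}_n^p}\leq C(1+\|a+\varepsilon\psi\|_{\mathcal{L}_n^{pm}}^m)$. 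On $[0,\tau^*]$ the full solution $u=a+\varepsilon^{\rho}\psi$ (or $a+\psi$) is bounded in $\mathcal{L}_n^{2m}$ by $\varepsilon^{-\kappa}$ via Definition \ref{stopping time}, so the nonlinear term is $\mathcal{O}(\varepsilon^{-m\kappa})$ in the relevant norm. The time integral $\int_0^t e^{-\varepsilon^{-2}\omega(t-\tau)}d\tau \leq \varepsilon^{2}\omega^{-1}$ contributes the dominant $\varepsilon^2$ factor. Collecting powers: $\varepsilon^{-\rho}\cdot\varepsilon^{2}\cdot\varepsilon^{-m\kappa}=\varepsilon^{2-\rho-m\kappa}$, and raising to the $p$-th power and taking $\mathbb{E}\sup_{[0,\tau^*]}$ yields exactly the bound $C\varepsilon^{2p-p\rho-mp\kappa}$ in \eqref{E13}.

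The main technical obstacle is the interchange of the supremum over $t$ with the convolution integral: bounding $\sup_t$ of an integral whose integrand already depends on $\tau$ only up to $\tau^*$ requires care, because one cannot naively pull the $\sup$ inside. The clean way is to bound the integrand pointwise on $[0,\tau^*]$ first—using that $\|\mathcal{F}^s(a+\varepsilon\psi)(\tau)\|$ is itself dominated by the stopped quantity $\sup_{[0,\tau^*]}(1+\|u\|_{\mathcal{L}_n^{2m}}^m)$, which is deterministically $\leq C\varepsilon^{-m\kappa}$ by the definition of $\tau^*$—so that the remaining $t$-dependence sits only in the deterministic convolution kernel $\int_0^t e^{-\varepsilon^{-2}\omega(t-\tau)}d\tau$, whose supremum over $t$ is trivially $\leq \varepsilon^2\omega^{-1}$. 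A secondary subtlety is whether the norm used for $\psi$ should be the same $\mathcal{L}_n^p$ appearing in the stopping condition (there in $\mathcal{L}_n^{2m}$); since $p\geq 1$ is arbitrary and the polynomial growth forces the higher $\mathcal{L}_n^{pm}$ norm of $u$, one must check that the embedding constants and the stopping-time control are compatible, but this is exactly what Assumption \ref{Poly} together with the definition of $\tau^*$ (with $2m\geq pm$ for the relevant $p$, or an application of the interpolation/Hölder step) is designed to supply.
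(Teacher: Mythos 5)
Your proposal is correct and follows essentially the same route as the paper: identify the difference as the single convolution term $\varepsilon^{-\rho}\int_0^t e^{\varepsilon^{-2}(t-\tau)\mathcal{A}_s}\mathcal{F}^s(u)\,d\tau$, apply the exponential decay (\ref{E3}) of the semigroup on the range of $P_s$, bound $\|\mathcal{F}^s(u)\|_{\mathcal{L}_n^p}$ via Assumption \ref{Poly} and the stopping time $\tau^*$ to get the factor $\varepsilon^{-m\kappa}$, and integrate the kernel to gain $\varepsilon^{2}$. Your closing remarks on pulling the supremum through the convolution and on matching the $\mathcal{L}_n^{2m}$ norm of the stopping condition with the $\mathcal{L}_n^{pm}$ norm demanded by the polynomial bound are, if anything, more explicit than the paper's own treatment of these points.
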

\begin{proof}
From (\ref{E200}) using semigroup estimates and Assumption \ref{Poly}
we obtain%
\begin{eqnarray*}
\Big\|\psi (t)-e^{\varepsilon ^{-2}t\mathcal{A}}\psi (0)-\mathcal{Z}^{s}(t)%
\Big\|_{\mathcal{L}_{n}^{p}} &=&\frac{1}{\varepsilon ^{\rho }}\Big\|%
\int_{0}^{t}e^{\varepsilon ^{-2}\mathcal{A}_{s}(T-\tau )}\mathcal{F}%
^{s}(u)d\tau \Big\|_{\mathcal{L}_{n}^{p}} \\
&\leq &C\varepsilon ^{-\rho }\sup_{\tau \in [ 0,\tau ^{\ast }] }\left\Vert 
\mathcal{F}^{s}(u)\right\Vert _{\mathcal{L}_{n}^{p}}\int_{0}^{t}e^{-%
\varepsilon ^{-2}\omega (t-\tau )}d\tau \\
&\leq &C\varepsilon ^{2-\rho }\sup_{\tau \in [ 0,\tau ^{\ast }]
}(1+\left\Vert u\right\Vert _{\mathcal{L}_{n}^{pm}}^{m}) 
\leq C\varepsilon ^{2-\rho -m\kappa }.
\end{eqnarray*}
\end{proof}
\begin{lemma}
\label{Lemma2} Under Assumptions \ref{assg} and \ref{Wiener}, for every $%
\kappa _{0}>0$ and $p\geq 1$ there is a constant $C,$ depending on $p,\
\alpha _{k},$ $\lambda _{k},$\ $\kappa _{0}$ and $T_{0},$ such that%
\begin{equation}
\mathbb{E}\sup_{t\in [ 0,T_0] }\left\Vert \mathcal{Z}^{s}(t)\right\Vert _{%
\mathcal{L}_{n}^{p}}^{p}\leq C\varepsilon ^{-\kappa _{0}},  \label{eq19a}
\end{equation}%
where $\mathcal{Z}^{s}(t)$ was defined in\ (\ref{E40}).
\end{lemma}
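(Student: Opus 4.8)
The plan is to exploit that $\mathcal{Z}^{s}$ is a Gaussian process whose components (\ref{E40a}) are fast Ornstein--Uhlenbeck processes, and to control the supremum in time by the factorization method. Observe first that the prefactor $\varepsilon^{-1}$ exactly balances the factor $\varepsilon^{-2}$ in the exponent: for fixed $t$ one has $\mathbb{E}|\mathcal{Z}_{i,j}(t)|^{2}\le q_{j,j}^{i,i}/(2d_{i}\lambda_{j})$, independent of $\varepsilon$, so a pointwise-in-time bound would be uniform in $\varepsilon$. The genuine difficulty is the supremum over $[0,T_{0}]$, which spans of order $\varepsilon^{-2}$ fast time scales; this is exactly what produces the (arbitrarily small) loss $\varepsilon^{-\kappa_{0}}$.

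To capture the supremum I would use the factorization identity. Fix $\beta\in(0,\gamma]$ and write, via stochastic Fubini, $\mathcal{Z}^{s}(t)=\tfrac{\sin\pi\beta}{\pi}\int_{0}^{t}(t-\sigma)^{\beta-1}e^{\varepsilon^{-2}(t-\sigma)\mathcal{A}_{s}}Y(\sigma)\,d\sigma$ with the auxiliary Gaussian process $Y(\sigma)=\sum_{i,j}\varepsilon^{-1}\int_{0}^{\sigma}(\sigma-s)^{-\beta}e^{-\varepsilon^{-2}d_{i}\lambda_{j}(\sigma-s)}\,d\tilde{W}_{i,j}(s)\,g_{j}e_{i}$. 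Estimating the semigroup by (\ref{E3d}) and applying H\"older in time to the singular kernel $(t-\sigma)^{\beta-1}$ gives, for any integrability exponent $\tilde{p}$ satisfying $\beta\tilde{p}>1$, the deterministic bound $\sup_{t\in[0,T_{0}]}\|\mathcal{Z}^{s}(t)\|_{\mathcal{L}_{n}^{\tilde{p}}}\le C\big(\int_{0}^{T_{0}}\|Y(\sigma)\|_{\mathcal{L}_{n}^{\tilde{p}}}^{\tilde{p}}\,d\sigma\big)^{1/\tilde{p}}$, where $C$ is independent of $\varepsilon$ since the kernel $\int_{0}^{T_{0}}\tau^{(\beta-1)\tilde{p}/(\tilde{p}-1)}\,d\tau$ does not involve $\varepsilon$.

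Next I would bound $Y$ by Gaussianity. Writing $\|Y(\sigma)\|_{\mathcal{L}_{n}^{\tilde{p}}}^{\tilde{p}}=\sum_{i}\|Y_{i}(\sigma)\|_{\mathcal{L}^{\tilde{p}}}^{\tilde{p}}$ and using that $Y_{i}(\sigma,x)$ is, for fixed $(\sigma,x)$, a centred Gaussian, one has $\mathbb{E}\|Y_{i}(\sigma)\|_{\mathcal{L}^{\tilde{p}}}^{\tilde{p}}=C_{\tilde{p}}\int_{G}\big(\mathrm{Var}\,Y_{i}(\sigma,x)\big)^{\tilde{p}/2}\,dx$. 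Expanding the variance over the modes, using $\int_{0}^{\infty}r^{-2\beta}e^{-ar}\,dr=\Gamma(1-2\beta)a^{2\beta-1}$ with $a=\varepsilon^{-2}d_{i}(\lambda_{j}+\lambda_{k})$, the bound $\|g_{j}\|_{\infty}\le C\lambda_{j}^{\gamma_{1}}$ from Assumption \ref{assg}, and the elementary inequality $\lambda_{j}^{\gamma_{1}}\lambda_{k}^{\gamma_{1}}\le C(\lambda_{j}+\lambda_{k})^{2\gamma_{1}}$, one arrives at $\mathrm{Var}\,Y_{i}(\sigma,x)\le C\varepsilon^{-4\beta}\sum_{j,k\ge1}(\lambda_{j}+\lambda_{k})^{2\beta+2\gamma_{1}-1}\,|q_{j,k}^{i,i}|$. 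Since $\beta\le\gamma$, this series converges by the summability condition (\ref{e4a}) of Assumption \ref{Wiener}, whence $\mathbb{E}\|Y(\sigma)\|_{\mathcal{L}_{n}^{\tilde{p}}}^{\tilde{p}}\le C\varepsilon^{-2\beta\tilde{p}}$ uniformly in $\sigma\in[0,T_{0}]$.

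Combining the two previous steps gives $\mathbb{E}\sup_{t\in[0,T_{0}]}\|\mathcal{Z}^{s}(t)\|_{\mathcal{L}_{n}^{\tilde{p}}}^{\tilde{p}}\le C\int_{0}^{T_{0}}\mathbb{E}\|Y(\sigma)\|_{\mathcal{L}_{n}^{\tilde{p}}}^{\tilde{p}}\,d\sigma\le C\varepsilon^{-2\beta\tilde{p}}$. To reach an arbitrary pair $(p,\kappa_{0})$ I would set $\beta=\min\{\gamma,\kappa_{0}/(2p)\}$ and then choose $\tilde{p}\ge p$ large enough that $\beta\tilde{p}>1$; since $G$ is bounded, $\|\cdot\|_{\mathcal{L}_{n}^{p}}\le C\|\cdot\|_{\mathcal{L}_{n}^{\tilde{p}}}$, and Jensen's inequality reduces the $p$-th moment to the $\tilde{p}$-th moment, yielding $\mathbb{E}\sup_{t}\|\mathcal{Z}^{s}(t)\|_{\mathcal{L}_{n}^{p}}^{p}\le C\varepsilon^{-2\beta p}\le C\varepsilon^{-\kappa_{0}}$. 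The main obstacle is precisely this interplay: convergence of the spatial series forces $\beta$ to be small, whereas the factorization requires $\beta\tilde{p}>1$; the resolution is that $\tilde{p}$ is a free parameter, so one first proves the estimate at high integrability and only afterwards descends to the prescribed $p$, at the harmless cost of the factor $\varepsilon^{-2\beta p}$.
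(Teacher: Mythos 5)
Your proposal is correct and follows essentially the same route as the paper: the factorization method, a Gaussian variance computation for the auxiliary process controlled by Assumption \ref{assg} and the summability condition (\ref{e4a}), and a final descent from a high integrability exponent to the prescribed $p$ to make the loss an arbitrarily small power $\varepsilon^{-\kappa_{0}}$. The only (harmless) difference is bookkeeping: the paper keeps the exponential decay $e^{-\omega\varepsilon^{-2}(t-s)}$ of the semigroup on $S$ in the H\"older step, arrives at an intermediate bound $\varepsilon^{-2}$ for any fixed moment, and then interpolates, whereas you discard the decay, shrink the factorization exponent $\beta$ instead, and compensate with a large $\tilde{p}$.
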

\begin{proof}
We use the celebrated factorization method introduced in \cite{Prato} to
prove the bound on $\mathcal{Z}^{s}(t)=\sum_{i=1}^{n}\mathcal{Z}_{i}(t)e_{i}$%
, which is based on the following elementary identity%
\begin{equation}
\int_{\sigma }^{t}(t-r)^{\gamma -1}(r-\sigma )^{-\gamma }dr=\frac{\pi }{\sin
(\pi \gamma )}\text{ for }0\leq r\leq t,\text{ }0<\gamma <1.  \label{eq50}
\end{equation}
Fix $\gamma \in \left( 0,\frac{1}{2}\right) .$ To prove (\ref{eq19a}), it is
enough to bound $\mathcal{Z}_{i}$ for $i=1,\ldots n$. We obtain from
Equation (\ref{E40b}) that 
\begin{equation}
\mathcal{Z}_{i}(t)=\sum_{j=1}^{\infty }\varepsilon
^{-1}\int_{0}^{t}e^{-\varepsilon ^{-2}d_{i}(t-s)\lambda _{j}}d\tilde{W}%
_{i,j}(s)g_{j}=\varepsilon ^{-1}\int_{0}^{t}e^{\varepsilon ^{-2}(t-s)%
\mathcal{A}_{i}}d\tilde{W}_{i}(s),  \label{eq50b}
\end{equation}
where 
\begin{equation*}
\tilde{W}_{i}(t)=\sum_{j=1}^{\infty }\tilde{W}_{i,j}(s)g_{j}\text{ \ for }%
i=1,2,\ldots,n.
\end{equation*}
Using Identity (\ref{eq50}) with Equation (\ref{eq50b}), we obtain:%
\begin{equation*}
\mathcal{Z}_{i}(t)=C_{\gamma }\varepsilon ^{-1}\int_{0}^{t}e^{\varepsilon
^{-2}(t-\sigma )\mathcal{A}_{i}}\left[ \int_{\sigma }^{t}(t-s)^{\gamma
-1}(s-\sigma )^{-\gamma }dr\right] d\tilde{W}_{i}(\sigma ).
\end{equation*}%
From the stochastic Fubini theorem, we obtain%
\begin{eqnarray}
\mathcal{Z}_{i}(t) &=&C_{\gamma }\varepsilon ^{-1}\int_{0}^{t}e^{\varepsilon
^{-2}(t-s)\mathcal{A}_{i}}(t-s)^{\gamma -1}y_{i}(s)ds,  \label{eq50a}
\end{eqnarray}%
where%
\begin{eqnarray}
y_{i}(s) &=&\int_{0}^{s}e^{\varepsilon ^{-2}(s-\sigma )\mathcal{A}%
_{i}}(s-\sigma )^{-\gamma }d\tilde{W}_{i}(\sigma )  \notag \\
&=&\sum_{j=1}^{\infty }\int_{0}^{s}e^{-\varepsilon ^{-2}d_{i}(s-\sigma
)\lambda _{j}}(s-\sigma )^{-\gamma }d\tilde{W}_{i,j}(\sigma )g_{j}.
\label{eq50h}
\end{eqnarray}%
Taking $\left\Vert \cdot \right\Vert _{\mathcal{L}_{n}^{p}}^{p}$ on both
sides of (\ref{eq50a}) and using (\ref{E3}), we obtain 
\begin{equation*}
\Vert \mathcal{Z}_{i}(t)\Vert _{\mathcal{L}^{p}}^{p} \leq C_{\gamma
}^{p}\varepsilon ^{-p}\Big( \int_{0}^{t}e^{-\varepsilon ^{-2}(t-s)\omega
}(t-s)^{\gamma -1}\left\Vert y_{i}(s)\right\Vert _{\mathcal{L}^{p}}ds\Big) %
^{p}.
\end{equation*}%
Using H\"{o}lder inequality with $\frac{1}{p}+\frac{1}{q}=1$ for
sufficiently large $p$ implies%
\begin{eqnarray*}
\Vert \mathcal{Z}_{i}(t) \Vert _{\mathcal{L}^{p}}^{p} &\leq &C_{\gamma
}^{p}\varepsilon ^{-p}\Big( \int_{0}^{t}e^{-\varepsilon ^{-2}(t-s)\omega
}(t-s)^{q\gamma -q}ds\Big) ^{\frac{p}{q}}\cdot \int_{0}^{t}\left\Vert
y_{i}(s)\right\Vert _{\mathcal{L}^{p}}^{p}ds \\
&\leq &C\varepsilon ^{-2+2p(\gamma -\frac{1}{2})} \int_{0}^{t}\Vert
y_{i}(s)\Vert _{\mathcal{L}^{p}}^{p}ds .
\end{eqnarray*}%
Taking supremum after expectation, yields%
\begin{equation}
\mathbb{E}\sup_{t\in [ 0,T_0] }\left\Vert \mathcal{Z}_{i}(t)\right\Vert _{%
\mathcal{L}^{p}}^{p}\leq C\varepsilon ^{-2+2p(\gamma -\frac{1}{2}%
)}\cdot \int_{0}^{T_{0}}\mathbb{E}\left\Vert y_{i}(s)\right\Vert _{\mathcal{L%
}^{p}}^{p}ds .  \label{eq54}
\end{equation}%
Now, we bound $\mathbb{E}\left\Vert y_{i}(s)\right\Vert _{\mathcal{L}%
^{p}}^{p}.$ By Gaussianity%
\begin{equation*}
\mathbb{E}\left\Vert y_{i}(s)\right\Vert _{\mathcal{L}^{p}}^{p}=\mathbb{E}%
\int_{D}\vert y_{i}(s,x)\vert ^{p}dx\leq C_{p}\Big( \int_{D}\mathbb{E}\vert
y_{i}(s,x)\vert ^{2}\Big) ^{\frac{p}{2}} dx.  \label{eq51d}
\end{equation*}%
Hence by Definition of $y_{i}$ (\ref{eq50h})%
\begin{eqnarray*}
\mathbb{E}\left\vert y_{i}(s,x)\right\vert ^{2} &=&\mathbb{E}\big|%
\sum_{j=1}^{\infty }\int_{0}^{s}e^{-\varepsilon ^{-2}d_{i}(s-\sigma )\lambda
_{j}}(s-\sigma )^{-\gamma }d\tilde{W}_{i,j}(\sigma )g_j(x)\big|^{2}
\label{eq51} \\
&=&C\sum_{j,k=1}^{\infty }q_{j,k}^{i,i}\int_{0}^{s}e^{-\varepsilon
^{-2}d_{i}(s-\sigma )(\lambda _{j}+\lambda _{k})}(s-\sigma )^{-2\gamma
}d\sigma g_j(x)g_k(x),  \notag
\end{eqnarray*}%
where we used the definition of covariance operator (\ref{E10d}). Hence,
using the bounds on $g_j$%
\begin{eqnarray}
\mathbb{E}\left\vert y_{i}(s)\right\vert ^{2} &\leq &C\varepsilon
^{2-4\gamma }\sum_{j,k=1}^{\infty }(\lambda _{j}+\lambda
_{k})^{2\gamma+2\gamma_1 -1}q_{j,k}^{i,i} \leq C\varepsilon ^{2-4\gamma },
\label{eq52}
\end{eqnarray}%
where we used (\ref{e4a}). Thus%
\begin{equation}
\sup_{t\in[0,T_0]} \mathbb{E}\left\Vert y_{i}(s)\right\Vert _{\mathcal{L}%
^{p}}^{p}\leq C\varepsilon ^{p-2p\gamma }\text{ }.  \label{eq53a}
\end{equation}%
Now, returning to Equation (\ref{eq54}) and using Equation (\ref{eq53a}),
yields%
\begin{equation*}
\mathbb{E}\sup_{t\in [ 0,T_0] }\left\Vert \mathcal{Z}_{i}(t)\right\Vert _{%
\mathcal{L}_{n}^{p}}^{p}\leq C\varepsilon ^{-2}.
\end{equation*}%
We finish the proof by using H\"{o}lder inequality to derive for all $p>1$
and sufficiently large $q>\frac{2}{\kappa _{0}}$%
\begin{equation*}
\mathbb{E}\sup_{t\in [ 0,T_0] }\left\Vert \mathcal{Z}_{i}(t)\right\Vert _{%
\mathcal{L}_{n}^{p}}^{p}\leq \Big( \mathbb{E}\sup_{t\in [ 0,T_0] }\left\Vert 
\mathcal{Z}_{i}(t)\right\Vert _{\mathcal{L}_{n}^{p}}^{pq}\Big)^{\frac{1}{q}%
}\leq C\varepsilon ^{-\kappa _{0}}.
\end{equation*}
\end{proof}

The following corollary states that $\psi (t)$ is with high probability much
smaller than $\varepsilon ^{-\kappa }$ as assumed the Definition \ref%
{stopping time}\ for $t\leq \tau ^{\ast }$. We show later $\tau ^{\ast }\geq
T_{0}$ with high probability.
\begin{corollary}
\label{Cor1} Under the assumptions of Lemmas \ref{Lemma1} and \ref{Lemma2},
if $\psi (0)=\mathcal{O}(1)$, then for $p>0$ and $\rho =0$ or $1$\ there
exist a constant $C>0$ such that for $\kappa _{0}\leq \kappa $ 
\begin{equation}
\mathbb{E}\sup_{t\in [ 0,\tau ^{\ast }] }\left\Vert \psi (t)\right\Vert _{%
\mathcal{L}_{n}^{p}}^{p}\leq C\varepsilon ^{-\kappa _{0}}\text{ }.
\label{e22}
\end{equation}
\end{corollary}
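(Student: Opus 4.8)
The plan is to read the bound straight off the two preceding lemmas together with the contractive semigroup estimate (\ref{E3}), assembled by the triangle inequality. Starting from the representation (\ref{E200}) I would split $\psi$ as
\[
\|\psi(t)\|_{\mathcal{L}_n^p} \le \Big\|\psi(t)-e^{\varepsilon^{-2}t\mathcal{A}}\psi(0)-\mathcal{Z}^s(t)\Big\|_{\mathcal{L}_n^p} + \Big\|e^{\varepsilon^{-2}t\mathcal{A}}\psi(0)\Big\|_{\mathcal{L}_n^p} + \|\mathcal{Z}^s(t)\|_{\mathcal{L}_n^p},
\]
raise both sides to the $p$-th power using the elementary inequality $(x+y+z)^p\le 3^{p-1}(x^p+y^p+z^p)$, and then apply $\sup_{t\in[0,\tau^*]}$ followed by $\mathbb{E}$. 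Since the supremum of a sum is dominated by the sum of the suprema, it suffices to bound each of the three resulting expectations separately, and the claimed estimate will follow from the dominant one.

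For the first expectation I would invoke Lemma \ref{Lemma1} directly, giving the bound $C\varepsilon^{2p-p\rho-mp\kappa}$; for $\rho\in\{0,1\}$ and $\kappa$ in the admissible range the exponent $2-\rho-m\kappa$ is positive, so this term is a positive power of $\varepsilon$ and in particular is $\le C\varepsilon^{-\kappa_0}$. For the third expectation I would enlarge the time interval, using $\tau^*\le T_0$ to replace $\sup_{[0,\tau^*]}$ by $\sup_{[0,T_0]}$ for the nonnegative integrand, and then apply Lemma \ref{Lemma2} to obtain $C\varepsilon^{-\kappa_0}$; this is the dominant contribution. For the middle term the essential point is that $\psi(0)\in S=\mathcal{N}^\bot$, so $P_s\psi(0)=\psi(0)$ and (\ref{E3}) applies, yielding
\[
\sup_{t\in[0,\tau^*]}\Big\|e^{\varepsilon^{-2}t\mathcal{A}}\psi(0)\Big\|_{\mathcal{L}_n^p} \le M\sup_{t\ge 0}e^{-\omega\varepsilon^{-2}t}\|\psi(0)\|_{\mathcal{L}_n^p}=M\|\psi(0)\|_{\mathcal{L}_n^p};
\]
since $\psi(0)=\mathcal{O}(1)$ we have $\mathbb{E}\|\psi(0)\|_{\mathcal{L}_n^p}^p\le C$, so this contribution is $\mathcal{O}(1)$ and again $\le C\varepsilon^{-\kappa_0}$. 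Collecting the three bounds gives the assertion.

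I do not expect a genuine obstacle here: the corollary is essentially a bookkeeping assembly of Lemmas \ref{Lemma1} and \ref{Lemma2}. The only points that warrant a little care are the mismatch of time horizons between the two lemmas, which is handled by $\tau^*\le T_0$; the use of $\psi(0)\in S$ so that the initial-data semigroup term actually decays rather than persisting on the kernel $\mathcal{N}$; and the verification that the small error exponent $2-\rho-m\kappa$ coming from Lemma \ref{Lemma1} stays positive for the relevant $\kappa$, so that (together with $\kappa_0\le\kappa$) this term is negligible against the $\varepsilon^{-\kappa_0}$ produced by the fast stochastic convolution $\mathcal{Z}^s$.
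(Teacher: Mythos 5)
Your proposal is correct and follows essentially the same route as the paper: a triangle-inequality split of $\psi$ into the remainder from Lemma \ref{Lemma1}, the decaying semigroup term acting on $\psi(0)=\mathcal{O}(1)$, and the stochastic convolution bounded by Lemma \ref{Lemma2}, with the observation that the estimate holds provided $\kappa<\frac{2-\rho}{m}$ so that the exponent $2p-p\rho-mp\kappa$ is positive. The paper's proof is just a more compressed version of the same bookkeeping.
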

\begin{proof}
By triangle inequality and Lemma \ref{Lemma2}, we obtain from (\ref{E13})%
\begin{equation*}
\mathbb{E}\sup_{t\in [ 0,\tau ^{\ast }] }\left\Vert \psi (t)\right\Vert _{%
\mathcal{L}_{n}^{p}}^{p}\leq C+C\varepsilon ^{2p-p\rho -mp\kappa
}+C\varepsilon ^{-\kappa _{0}}\text{ },
\end{equation*}%
which implies (\ref{e22}) for $\kappa <\frac{2-\rho }{m}$.
\end{proof}

Let us now state a result similar to averaging. When we integrate over the
fast decaying contribution of the initial condition in $\psi $, then this
leads to terms of order $\mathcal{O}(\varepsilon ^{2})$.
\begin{lemma}
\label{Lemma3}For $q\geq 1$ there exists a constant $C>0$ such that%
\begin{equation*}
\int_{0}^{t}\left\Vert e^{\tau \varepsilon ^{-2}\mathcal{A}_{s}}\psi
(0)\right\Vert _{\mathcal{L}_{n}^{p}}^{q}d\tau \leq C\varepsilon
^{2}\left\Vert \psi (0)\right\Vert _{\mathcal{L}_{n}^{p}}^{q}\text{ for }%
\psi (0)\in \mathcal{L}_{n}^{p}.
\end{equation*}
\end{lemma}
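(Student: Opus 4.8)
The plan is to reduce the whole statement to the exponential decay estimate (\ref{E3}) for the analytic semigroup restricted to $S$. Since by hypothesis $\psi(0)\in S=\mathcal{N}^{\bot}$, we have $P_{s}\psi(0)=\psi(0)$, and therefore $e^{\tau\varepsilon^{-2}\mathcal{A}_{s}}\psi(0)=e^{\tau\varepsilon^{-2}\mathcal{A}}P_{s}\psi(0)$. Applying (\ref{E3}) with $t=\tau\varepsilon^{-2}$ gives, for every $\tau\geq 0$, the pointwise-in-$\tau$ bound
\begin{equation*}
\left\Vert e^{\tau\varepsilon^{-2}\mathcal{A}_{s}}\psi(0)\right\Vert_{\mathcal{L}_{n}^{p}}\leq M\,e^{-\omega\tau\varepsilon^{-2}}\left\Vert\psi(0)\right\Vert_{\mathcal{L}_{n}^{p}}.
\end{equation*}

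Next I would raise this inequality to the power $q$ (legitimate since both sides are nonnegative and $q\geq 1$) to obtain the integrand bound $M^{q}e^{-q\omega\tau\varepsilon^{-2}}\left\Vert\psi(0)\right\Vert_{\mathcal{L}_{n}^{p}}^{q}$, and then integrate over $\tau\in[0,t]$. Extending the upper limit to $\infty$ (harmless, as the integrand is positive) and using $\int_{0}^{\infty}e^{-q\omega\tau\varepsilon^{-2}}\,d\tau=\varepsilon^{2}/(q\omega)$ yields
\begin{equation*}
\int_{0}^{t}\left\Vert e^{\tau\varepsilon^{-2}\mathcal{A}_{s}}\psi(0)\right\Vert_{\mathcal{L}_{n}^{p}}^{q}\,d\tau\leq \frac{M^{q}}{q\omega}\,\varepsilon^{2}\left\Vert\psi(0)\right\Vert_{\mathcal{L}_{n}^{p}}^{q},
\end{equation*}
which is exactly the claim with $C=M^{q}/(q\omega)$.

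There is essentially no real obstacle here; the single power of $\varepsilon^{2}$ is precisely the gain produced by integrating one fast-decaying exponential of rate $\omega\varepsilon^{-2}$ over a time interval, which is the averaging effect over the transient initial contribution described just before the lemma. The only points worth recording are that the resulting constant $C$ depends on $q$, $M$, and $\omega$ but is independent of $\varepsilon$, $t$, and $\psi(0)$, as required, and that the exponent $\omega$ is the spectral gap on $S$ coming from $\lambda_{k}>0$ for $k\geq 1$, so that the decay rate is genuinely positive.
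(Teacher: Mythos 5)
Your proof is correct and follows exactly the paper's own argument: apply the exponential decay estimate (\ref{E3}) on $S$, raise to the power $q$, and integrate the fast-decaying exponential to produce the factor $\varepsilon^{2}/(q\omega)$. Nothing further is needed.
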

\begin{proof}
Using (\ref{E3}) we obtain%
\begin{equation*}
\int_{0}^{t}\left\Vert e^{\varepsilon ^{-2}\mathcal{A}_{s}\tau }\psi
(0)\right\Vert _{\mathcal{L}_{n}^{p}}^{q}d\tau \leq
c\int_{0}^{T}e^{-q\varepsilon ^{-2}\omega \tau }\left\Vert \psi
(0)\right\Vert _{\mathcal{L}_{n}^{p}}^{q}d\tau \leq \frac{\varepsilon ^{2}}{%
q\omega }\left\Vert \psi (0)\right\Vert _{\mathcal{L}_{n}^{p}}^{q}.
\end{equation*}
\end{proof}


\section{Averaging over the fast OU-process}


\begin{lemma}
\label{Lemma20}Let Assumption \ref{Wiener} hold and consider $\mathcal{Z}_{i,j}(t)$ 
as defined in (\ref{E40a}). 
Then for arbitrary $\delta
_{0}\in (0,\frac{1}{2})$ we obtain%
\begin{equation}
\mathcal{Z}_{i,j}(t)=\lambda _{j}^{-\frac{1}{2}(1-\delta _{0})}\left( \emph{q%
}_{j,j}^{i,i}\right) ^{\frac{1}{2}}\mathcal{O}(\varepsilon ^{-\delta _{0}}),
\label{z1}
\end{equation}%
and%
\begin{equation}
\mathcal{Z}_{i,j}(t)\mathcal{Z}_{\ell ,k}(t)=\Big(\lambda _{j}\lambda _{k}%
\Big)^{-\frac{1}{2}(1-\delta _{0})}\left( \emph{q}_{j,j}^{i,i}\emph{q}%
_{k,k}^{\ell ,\ell }\right) ^{\frac{1}{2}}\mathcal{O}(\varepsilon ^{-2\delta
_{0}}).  \label{z2}
\end{equation}
Moreover, the\ $\mathcal{O}$-terms are uniform in $i,$ $j,$ $k$ and $\ell .$
\end{lemma}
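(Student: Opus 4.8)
The plan is to adapt the factorization argument of Lemma \ref{Lemma2} to the single scalar mode $\mathcal{Z}_{i,j}$, this time tracking the precise dependence on $\lambda_j$ and on the covariance $q_{j,j}^{i,i}$. Since $\tau^\ast\le T_0$, it suffices by Definition \ref{def:O} to bound $\mathbb{E}\sup_{t\in[0,T_0]}|\mathcal{Z}_{i,j}(t)|^p$ for every $p$, and the process $\mathcal{Z}_{i,j}$ is independent of the cut-off. Fix $\gamma\in(\delta_0/2,\tfrac12)$. Exactly as in (\ref{eq50a})--(\ref{eq50h}), the elementary identity (\ref{eq50}) and the stochastic Fubini theorem let me write $\mathcal{Z}_{i,j}(t)=C_\gamma\varepsilon^{-1}\int_0^t e^{-\varepsilon^{-2}d_i(t-s)\lambda_j}(t-s)^{\gamma-1}y_{i,j}(s)\,ds$ with $y_{i,j}(s)=\int_0^s e^{-\varepsilon^{-2}d_i(s-\sigma)\lambda_j}(s-\sigma)^{-\gamma}\,d\tilde W_{i,j}(\sigma)$.

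The heart of the estimate is the explicit size of $y_{i,j}$. Because $\tilde W_{i,j}$ has quadratic variation $q_{j,j}^{i,i}\,ds$ by (\ref{E10d}), the It\^o isometry gives $\mathbb{E}|y_{i,j}(s)|^2=q_{j,j}^{i,i}\int_0^s e^{-2\varepsilon^{-2}d_i\lambda_j w}w^{-2\gamma}\,dw$, and the substitution $v=2\varepsilon^{-2}d_i\lambda_j w$ (using $\gamma<\tfrac12$ for convergence of $\int_0^\infty e^{-v}v^{-2\gamma}dv$) bounds this by $C\,q_{j,j}^{i,i}\big(\varepsilon^2/(d_i\lambda_j)\big)^{1-2\gamma}$. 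Gaussianity then upgrades this to $\mathbb{E}|y_{i,j}(s)|^p\le C_p (q_{j,j}^{i,i})^{p/2}\big(\varepsilon^2/(d_i\lambda_j)\big)^{(1-2\gamma)p/2}$. Applying H\"older in the outer convolution exactly as in Lemma \ref{Lemma2} (the deterministic kernel integrating to $\asymp\big(\varepsilon^2/(d_i\lambda_j)\big)^{p(\gamma-1+1/q)}$ with $1/p+1/q=1$) and assembling, the powers of $\varepsilon^2/(d_i\lambda_j)$ collect to $p(1/q-1/2)=p/2-1$, producing the clean intermediate bound
$$\mathbb{E}\sup_{t\in[0,T_0]}|\mathcal{Z}_{i,j}(t)|^p\le C_p\,\varepsilon^{-2}\,(q_{j,j}^{i,i})^{p/2}\,(d_i\lambda_j)^{1-p/2}.$$

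The main bookkeeping obstacle is converting this into the asserted form. Comparing exponents, the bound equals $C_p(q_{j,j}^{i,i})^{p/2}\lambda_j^{1-p/2}\varepsilon^{-2}$ up to the finitely many constants $d_i$, whereas the target (\ref{z1}) is $(q_{j,j}^{i,i})^{p/2}\lambda_j^{-\frac p2(1-\delta_0)}\varepsilon^{-p\delta_0}$; the ratio of the two is $\lambda_j^{1-p\delta_0/2}\varepsilon^{p\delta_0-2}$. For $p\ge 2/\delta_0$ both exponents are nonpositive, so since $0<\varepsilon<1$ and $\lambda_j\ge\lambda_1>0$ the intermediate bound already implies the claim (at $p=2/\delta_0$ the two even coincide exactly). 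For $p<2/\delta_0$ I apply Jensen's inequality from $p'=\lceil 2/\delta_0\rceil$, i.e. $\mathbb{E}\sup|\mathcal{Z}_{i,j}|^p\le(\mathbb{E}\sup|\mathcal{Z}_{i,j}|^{p'})^{p/p'}$, which reproduces the target with the same prefactor. Uniformity in $i,j$ is automatic: every constant above depends only on $p,\gamma,T_0$, on the Gamma integrals $\Gamma(1-2\gamma)$ and $\Gamma(q\gamma-q+1)$, and on the finite set $\{d_1,\dots,d_n\}$, never on $j$.

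Finally, (\ref{z2}) follows from (\ref{z1}) with no further work. By Cauchy--Schwarz, $\mathbb{E}\sup_t|\mathcal{Z}_{i,j}\mathcal{Z}_{\ell,k}|^p\le(\mathbb{E}\sup_t|\mathcal{Z}_{i,j}|^{2p})^{1/2}(\mathbb{E}\sup_t|\mathcal{Z}_{\ell,k}|^{2p})^{1/2}$, and inserting (\ref{z1}) at exponent $2p$ into each factor gives exactly $C_p(\lambda_j\lambda_k)^{-\frac p2(1-\delta_0)}(q_{j,j}^{i,i}q_{k,k}^{\ell,\ell})^{p/2}\varepsilon^{-2p\delta_0}$, which is the required $\mathcal{O}(\varepsilon^{-2\delta_0})$-bound, uniform in $i,j,k,\ell$.
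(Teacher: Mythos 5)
Your proof is correct and follows essentially the same route as the paper: the factorization method of Lemma \ref{Lemma2} yields the intermediate bound $\mathbb{E}\sup_{[0,T_0]}|\mathcal{Z}_{i,j}|^p\le C\varepsilon^{-2}(q_{j,j}^{i,i})^{p/2}\lambda_j^{1-p/2}$, which is then converted to the $\delta_0$-form by H\"older/Jensen interpolation in the moment index, and the product estimate follows by Cauchy--Schwarz. (Only a wording slip: in the ratio $\lambda_j^{1-p\delta_0/2}\varepsilon^{p\delta_0-2}$ the $\varepsilon$-exponent is \emph{nonnegative} for $p\ge 2/\delta_0$, not nonpositive --- which is exactly what you need since $0<\varepsilon<1$.)
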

\begin{proof}
For the first part, we follow the same steps as in Lemma \ref{Lemma2}\ to
obtain%
\begin{equation*}
\mathbb{E}\sup_{t\in [ 0,T_0] }\left\vert \mathcal{Z}_{i,j}(t)\right\vert
^{p}\leq C\varepsilon ^{-2}\left( \lambda _{j}\right) ^{1-\frac{1}{2}%
p}\left( \emph{q}_{j,j}^{i,i}\right) ^{\frac{p}{2}}.
\end{equation*}%
Using H\"{o}lder inequality, we derive for sufficiently large $q$ and for a
constant independent on $i$ and $j$%
\begin{equation*}
\Big(\mathbb{E}\sup_{t\in [ 0,T_0] }\left\vert \mathcal{Z}%
_{i,j}(t)\right\vert ^{p}\Big)^{1/p}\leq C\lambda _{j}^{-\frac{1}{2}%
}(\varepsilon ^{-2}\lambda _{j})^{\frac{1}{pq}}\left( \emph{q}%
_{j,j}^{i,i}\right) ^{\frac{1}{2}}.
\end{equation*}%
We finish the proof by fixing $\delta _{0}=\frac{2}{pq}<\frac{1}{2}$ for
large $q$ and $p.$

For the second part we use Cauchy-Schwarz inequality to obtain%
\begin{equation*}
\mathbb{E}\sup_{[ 0,T_0] }\left\vert \mathcal{Z}_{i,j}\mathcal{Z}_{\ell
,k}\right\vert ^{p}\leq \Big(\mathbb{E}\sup_{[ 0,T_0] }\left\vert \mathcal{Z}%
_{i,j}\right\vert ^{2}\Big)^{1/2}\Big(\mathbb{E}\sup_{\left[ 0,T_{0}\right]
}\left\vert \mathcal{Z}_{\ell ,k}\right\vert ^{2p}\Big)^{1/2}.
\end{equation*}%
Using the first part, yields (\ref{z2}).
\end{proof}
In next corollary we state without proof the general case of Lemma \ref%
{Lemma20}. For the proof we can follow the same steps as in the proof of
Lemma \ref{Lemma20}.
\begin{corollary}
\label{coll2} Under the assumptions of Lemma \ref{Lemma20} we have%
\begin{equation}
\mathop{\textstyle \prod }\limits_{j=1}^{N}\mathcal{Z}_{\ell _{j},k_{j}}=%
\Big(\mathop{\textstyle \prod }\limits_{j=1}^{N}\lambda _{k_{j}}\Big)^{-%
\frac{1}{2}(1-\delta _{0})}\Big( \mathop{\textstyle \prod }\limits_{j=1}^{N}%
\emph{q}_{k_{j},k_{j}}^{\ell _{j},\ell _{j}}\Big) ^{\frac{1}{2}}\mathcal{O}%
(\varepsilon ^{-N\delta _{0}}).  \label{z3}
\end{equation}
\end{corollary}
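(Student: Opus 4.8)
The plan is to read the statement through Definition \ref{def:O}: it suffices to produce, for every $p\geq 1$, a constant $C_p$ that is uniform in the indices $\ell_1,\dots,\ell_N$ and $k_1,\dots,k_N$ and satisfies
\begin{equation*}
\mathbb{E}\sup_{t\in[0,T_0]}\Big|\prod_{j=1}^{N}\mathcal{Z}_{\ell_j,k_j}(t)\Big|^{p}\leq C_p\Big(\prod_{j=1}^{N}\lambda_{k_j}\Big)^{-\frac{p}{2}(1-\delta_0)}\Big(\prod_{j=1}^{N}q_{k_j,k_j}^{\ell_j,\ell_j}\Big)^{\frac{p}{2}}\varepsilon^{-Np\delta_0}.
\end{equation*}
Since $\tau^{\ast}\leq T_0$, a bound on $[0,T_0]$ immediately yields the bound on $[0,\tau^{\ast}]$ demanded by the $\mathcal{O}$-notation.

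The main step is to apply the generalized H\"older inequality inside the expectation with $N$ conjugate exponents all equal to $N$, so that $\sum_{j=1}^{N}\frac{1}{N}=1$. This is the $N$-fold analogue of the Cauchy--Schwarz step used for the product of two factors in the proof of Lemma \ref{Lemma20}, and it gives
\begin{equation*}
\mathbb{E}\sup_{t\in[0,T_0]}\Big|\prod_{j=1}^{N}\mathcal{Z}_{\ell_j,k_j}\Big|^{p}\leq\prod_{j=1}^{N}\Big(\mathbb{E}\sup_{t\in[0,T_0]}\big|\mathcal{Z}_{\ell_j,k_j}\big|^{Np}\Big)^{\frac{1}{N}}.
\end{equation*}
Into each factor I would substitute the single-index estimate from the first part of Lemma \ref{Lemma20}, read off at moment order $Np$ through Definition \ref{def:O}, namely
\begin{equation*}
\mathbb{E}\sup_{t\in[0,T_0]}\big|\mathcal{Z}_{\ell_j,k_j}\big|^{Np}\leq C_{Np}\,\lambda_{k_j}^{-\frac{Np}{2}(1-\delta_0)}\big(q_{k_j,k_j}^{\ell_j,\ell_j}\big)^{\frac{Np}{2}}\varepsilon^{-Np\delta_0},
\end{equation*}
in which $C_{Np}$ is uniform in $\ell_j$ and $k_j$ by the uniformity statement of Lemma \ref{Lemma20}.

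Raising this to the power $1/N$ and multiplying over $j=1,\dots,N$ collapses every $N$-th root: the constant becomes $C_{Np}$, the powers of $\lambda_{k_j}$ and of $q_{k_j,k_j}^{\ell_j,\ell_j}$ assemble into $\big(\prod_{j}\lambda_{k_j}\big)^{-\frac{p}{2}(1-\delta_0)}$ and $\big(\prod_{j}q_{k_j,k_j}^{\ell_j,\ell_j}\big)^{\frac{p}{2}}$, and the $N$ factors $\varepsilon^{-p\delta_0}$ combine to $\varepsilon^{-Np\delta_0}$. This is precisely the moment bound displayed above, which is (\ref{z3}) in the language of Definition \ref{def:O}. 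I anticipate no genuine obstacle: the only points requiring care are that H\"older be applied with equal exponents, so that each factor is controlled at the same elevated moment $Np$, and that the single-index constant in Lemma \ref{Lemma20} be uniform in the indices, which the lemma explicitly provides. The argument is simply the clean $N$-fold extension of the $N=2$ Cauchy--Schwarz computation.
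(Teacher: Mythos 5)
Your proposal is correct and is essentially the paper's intended argument: the paper omits the proof, saying only that one follows the same steps as in Lemma \ref{Lemma20}, whose $N=2$ case is exactly your H\"older step (there with Cauchy--Schwarz) combined with the single-index moment bound. Your generalized H\"older inequality with $N$ equal exponents, together with the uniformity of the constant in Lemma \ref{Lemma20}, is the clean $N$-fold extension the authors had in mind.
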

\begin{lemma}
\label{Lemma21}Let the assumptions of Lemma \ref{Lemma20} hold and let $X$
be a real valued stochastic process such that for some small $r\geq 0$\ we
have $X(0)=\mathcal{O}(\varepsilon ^{-r})$. If $dX=GdT$ with $G=\mathcal{O}%
(\varepsilon ^{-r})$, then%
\begin{equation}
\sup_{t\ge0}\mathbb{E} \vert \mathcal{Z}_{i,j}(t)\vert ^{2}\leq \frac{\emph{q%
}_{j,j}^{i,i}}{2d_{i}\lambda _{j}},  \label{z3a}
\end{equation}%
\begin{equation}
\int_{0}^{t}X\mathcal{Z}_{i,j}d\tilde{W}_{k,m}= \Big( \frac{\emph{q}%
_{m,m}^{k,k}\emph{q}_{j,j}^{i,i}}{\lambda _{j}}\Big) ^{\frac{1}{2}}\mathcal{O%
}(\varepsilon ^{-r}),  \label{z3b}
\end{equation}%
and%
\begin{equation}
\int_{0}^{t}X\mathop{\textstyle \prod }\limits_{\substack{ j=1,  \\ j\neq i}}%
^{N}\mathcal{Z}_{\ell _{j},k_{j}}d\tilde{W}_{\ell _{i},k_{i}}=\Big(%
\mathop{\textstyle \prod }\limits_{\substack{ j=1,  \\ j\neq i}}^{N}\lambda
_{k_{j}}\Big)^{-\frac{1}{2}}\Big(\mathop{\textstyle \prod }\limits_{j=1}^{N}%
\emph{q}_{k_{j},k_{j}}^{\ell _{j},\ell _{j}}\Big)^{\frac{1}{2}}\mathcal{O}%
(\varepsilon ^{-r}).  \label{z3d}
\end{equation}%
Again all\ $\mathcal{O}$-terms are uniform in the indices $\ell_j$ and $k_j$.
\end{lemma}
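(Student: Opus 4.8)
The plan is to treat (\ref{z3a}) as the engine for the whole lemma and to derive the two stochastic-integral bounds (\ref{z3b}) and (\ref{z3d}) from it by combining the Burkholder--Davis--Gundy (BDG) inequality with the Gaussianity of $\mathcal{Z}_{i,j}(s)$ at fixed times. The crucial point is that one must \emph{not} estimate the inner factors $\mathcal{Z}_{\ell_j,k_j}$ through their time-supremum as in Lemma \ref{Lemma20}, since the supremum there costs a factor $\varepsilon^{-\delta_0}$ and only yields the decay $\lambda^{-(1-\delta_0)/2}$; instead one keeps them inside the time integral and controls them at each fixed time by the sharp pointwise second-moment bound (\ref{z3a}), which carries the exact decay $\lambda^{-1/2}$ with no loss in $\varepsilon$.

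First I would establish (\ref{z3a}). Recall from (\ref{E40a}) that $\mathcal{Z}_{i,j}(t)=\varepsilon^{-1}\int_0^t e^{-\varepsilon^{-2}d_i(t-s)\lambda_j}\,d\tilde{W}_{i,j}(s)$ is, for each fixed $t$, a centered Gaussian integral of a deterministic kernel against the scalar martingale $\tilde{W}_{i,j}$, whose quadratic variation is $d\langle\tilde{W}_{i,j}\rangle_s=q_{j,j}^{i,i}\,ds$ by (\ref{E10d}). The It\^o isometry then gives
\begin{equation*}
\mathbb{E}|\mathcal{Z}_{i,j}(t)|^2=\varepsilon^{-2}q_{j,j}^{i,i}\int_0^t e^{-2\varepsilon^{-2}d_i(t-s)\lambda_j}\,ds=q_{j,j}^{i,i}\,\frac{1-e^{-2\varepsilon^{-2}d_i\lambda_j t}}{2d_i\lambda_j}\le\frac{q_{j,j}^{i,i}}{2d_i\lambda_j},
\end{equation*}
uniformly in $t\ge0$ and $\varepsilon$, which is exactly (\ref{z3a}). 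I would also record the elementary consequence of the hypotheses on $X$: since $X(0)=\mathcal{O}(\varepsilon^{-r})$ and $dX=G\,dt$ with $G=\mathcal{O}(\varepsilon^{-r})$, writing $X(t)=X(0)+\int_0^t G\,ds$ and using $\tau^*\le T_0$ yields $\mathbb{E}\sup_{[0,\tau^*]}|X|^p\le C_p\varepsilon^{-rp}$ for every $p$, i.e. $\sup_{[0,\tau^*]}|X|=\mathcal{O}(\varepsilon^{-r})$.

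For (\ref{z3b}) and (\ref{z3d}) I would argue identically, noting that (\ref{z3b}) is just the case $N=2$. Applying BDG to the martingale $\tilde{W}_{\ell_i,k_i}$ (with $d\langle\tilde{W}_{\ell_i,k_i}\rangle=q_{k_i,k_i}^{\ell_i,\ell_i}\,ds$) gives, for large $p$,
\begin{equation*}
\mathbb{E}\sup_{[0,\tau^*]}\Big|\int_0^t X\prod_{j\ne i}\mathcal{Z}_{\ell_j,k_j}\,d\tilde{W}_{\ell_i,k_i}\Big|^p\le C_p\big(q_{k_i,k_i}^{\ell_i,\ell_i}\big)^{p/2}\,\mathbb{E}\Big(\int_0^{\tau^*}|X|^2\prod_{j\ne i}|\mathcal{Z}_{\ell_j,k_j}|^2\,ds\Big)^{p/2}.
\end{equation*}
I would then bound $|X|\le\sup|X|$ and pull it out, apply Cauchy--Schwarz to split off $\big(\mathbb{E}\sup|X|^{2p}\big)^{1/2}=\mathcal{O}(\varepsilon^{-rp})$ from the remaining integral, use Jensen in time (with the normalized measure $ds/T_0$) to move the power inside, and finally exploit at each fixed $s$ the Gaussian moment identity together with (\ref{z3a}),
\begin{equation*}
\mathbb{E}\prod_{j\ne i}|\mathcal{Z}_{\ell_j,k_j}(s)|^{2p}\le C\prod_{j\ne i}\big(\mathbb{E}|\mathcal{Z}_{\ell_j,k_j}(s)|^2\big)^{p}\le C\prod_{j\ne i}\Big(\frac{q_{k_j,k_j}^{\ell_j,\ell_j}}{\lambda_{k_j}}\Big)^{p},
\end{equation*}
where the first inequality is generalized H\"older over the $N-1$ independent Gaussian factors combined with Gaussian moment equivalence. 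Collecting the powers reproduces $\big(\prod_{j\ne i}\lambda_{k_j}\big)^{-p/2}\big(\prod_{j=1}^N q_{k_j,k_j}^{\ell_j,\ell_j}\big)^{p/2}\varepsilon^{-rp}$, which is precisely the bound asserted in (\ref{z3d}), and (\ref{z3b}) for $N=2$.

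I expect the delicate points to be bookkeeping rather than conceptual. The one thing that must be right is the order of operations that avoids the spurious $\varepsilon^{-\delta_0}$ loss: BDG is applied to the outer integral so that the time-supremum falls only on the martingale, while every inner factor $\mathcal{Z}_{\ell_j,k_j}$ stays under the time integral and is controlled at fixed times by Gaussianity plus the sharp (\ref{z3a}). Uniformity of the $\mathcal{O}$-constants in the indices $\ell_j,k_j$ then follows automatically, since all index dependence has been factored out explicitly through $q_{k_j,k_j}^{\ell_j,\ell_j}$ and $\lambda_{k_j}$, leaving a constant $C_p$ depending only on $p$, $N$, $T_0$ and the diffusion constants $d_i$; one merely needs $p$ large enough for the H\"older and Jensen exponents, which is harmless because the $\mathcal{O}$-notation quantifies over all $p$.
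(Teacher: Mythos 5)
Your proposal is correct and follows essentially the same route as the paper: It\^o isometry for (\ref{z3a}), then Burkholder--Davis--Gundy plus H\"older and the Gaussian moment equivalence applied to the \emph{fixed-time} second moment from (\ref{z3a}) for (\ref{z3b}) and (\ref{z3d}). Your explicit Cauchy--Schwarz step to separate the factor $X$ before invoking Gaussianity is slightly more careful bookkeeping than the paper's one-line version, and your remark that the inner factors must be kept under the time integral (rather than estimated via the supremum bound of Lemma \ref{Lemma20}) is precisely the point of the argument.
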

\begin{proof}
For the first part, we use It\^{o} isometry to obtain%
\begin{equation*}
\mathbb{E}\left\vert \mathcal{Z}_{i,j}\right\vert ^{2}=\frac{1}{\varepsilon
^{2}}\mathbb{E}\Big\vert \int_{0}^{t}e^{-\varepsilon ^{-2}d_{i}(t-s)\lambda
_{j}}d\tilde{W}_{i,j}\Big\vert ^{2}=\frac{\emph{q}_{j,j}^{i.i}}{\varepsilon
^{2}}\int_{0}^{t}e^{-2\varepsilon ^{-2}d_{i}(t-s)\lambda _{j}}ds\leq \frac{%
\emph{q}_{j,j}^{i,i}}{2d_{i}\lambda _{j}}.
\end{equation*}
For the second part using Burkholder-Davis-Gundy theorem and H\"{o}lder
inequality, yields%
\begin{eqnarray*}
\mathbb{E}\sup_{t\in [ 0,T_{0}] }\big|\int_{0}^{t}X\mathcal{Z}_{i,j}d\tilde{W%
}_{k,m}\big|^{p} &\leq &C_{p}\left( \emph{q}_{m,m}^{k,k}\right) ^{\frac{p}{2}%
}\mathbb{E}\Big(\int_{0}^{T_{0}}\left\vert X\right\vert ^{2}\big|\mathcal{Z}%
_{i,j}\big|^{2}d\sigma \Big)^{\frac{p}{2}} \\
&\leq &C_{p,T_{0}}\varepsilon ^{-pr}\left( \emph{q}_{m,m}^{k,k}\right) ^{%
\frac{p}{2}}\int_{0}^{T_{0}}\mathbb{E}|\mathcal{Z}_{i,j}|^{p}d\sigma .
\end{eqnarray*}%
By Gaussianity and the first part we obtain%
\begin{eqnarray*}
\mathbb{E}\sup_{t\in [ 0,T_0] }\big|\int_{0}^{t}X\mathcal{Z}_{i,j}d\tilde{W}%
_{k,m}\big|^{p} &\leq &C_{p,T_{0}}\varepsilon ^{-pr}\Big( \frac{\emph{q}%
_{m,m}^{k,k}\emph{q}_{j,j}^{i,i}}{\lambda _{j}}\Big) ^{\frac{p}{2}}.
\end{eqnarray*}%
Analogously, for the last term%
\begin{eqnarray*}
\mathbb{E}\sup_{t\in [ 0,T_0] }\big|\int_{0}^{t}X\mathop{\textstyle \prod }%
\limits _{\substack{ j=1,  \\ j\neq i}}^{N}\mathcal{Z}_{\ell _{j},k_{j}}d%
\tilde{W}_{\ell _{i},k_{i}}\big|^{p} &\leq &C_{p,T_{0}}\left( \emph{q}%
_{k_{i},k_{i}}^{\ell _{i},\ell _{i}}\right) ^{\frac{p}{2}}\mathbb{E}%
\int_{0}^{T_{0}}\left\vert X\right\vert ^{p}\mathop{\textstyle \prod }\limits
_{\substack{ j=1,  \\ j\neq i}}^{N}\big|\mathcal{Z}_{\ell _{j},k_{j}}\big|%
^{p}d\sigma .
\end{eqnarray*}%
Using H\"older, Gaussianity and the first part,
we obtain (\ref{z3d}).
\end{proof}

In the following we state and prove the averaging lemma over the fast
OU-process $\mathcal{Z}_{i,j}$ (cf. (\ref{E40a})).
\begin{lemma}
\label{Lemma4} Under Assumption \ref{assg}, \ref{Wiener} and \ref{W-C}, let $%
X$ be as in Lemma \ref{Lemma21} and $N\leq m$. Then for $N$ odd%
\begin{equation}
\int_{0}^{t}X\mathop{\textstyle \prod }\limits_{i=1}^{N}\mathcal{Z}_{\ell
_{i},k_{i}}ds=A_{k_{1},\cdots ,k_{N}}^{\ell _{1},\cdots \ell _{N}}\mathcal{O}%
(\varepsilon ^{1-r}),  \label{z7}
\end{equation}%
and for $N$ even%
\begin{eqnarray}
\int_{0}^{t}X\mathop{\textstyle \prod }\limits_{i=1}^{N}\mathcal{Z}_{\ell
_{i},k_{i}}ds &=&\frac{1}{2^{\frac{N}{2}}} \sum\limits_{j\in Per(N)} %
\mathop{\textstyle \prod }\limits_{\eta=1}^{N/2} \frac{\emph{q}%
_{k_{j_{2\eta-1 }},k_{j_{2\eta }} }^{\ell_{j_{2\eta-1 }},\ell _{j_{2\eta}}}}{%
d_{\ell _{j_{2\eta-1 }}} \lambda_{k_{j_{2\eta-1 }}}+d_{\ell
_{j_{2\eta}}}\lambda _{k_{j_{2\eta}}}}\int_{0}^{t}Xds  \notag \\
&&+A_{k_{1},\cdots ,k_{N}}^{\ell _{1},\cdots \ell _{N}}\mathcal{O}%
(\varepsilon ^{1-r})\text{,\ }  \label{z8}
\end{eqnarray}%
with%
\begin{equation*}
\sum\limits_{k_{1}=1}^{\infty }\cdots \sum\limits_{k_{N}=1}^{\infty
}A_{k_{1},\cdots ,k_{N}}^{\ell _{1},\cdots ,\ell _{N}}\mathop{\textstyle
\prod }\limits_{i=1}^N \lambda_{k_i}^{\gamma_1}<\infty .
\end{equation*}
The $\mathcal{O}$-terms are again uniform in all indices.
\end{lemma}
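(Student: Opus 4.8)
The plan is to expand the product $\prod_{i=1}^N \mathcal{Z}_{\ell_i,k_i}$ using the defining stochastic integral (\ref{E40a}) and to repeatedly apply It\^o's formula, peeling off one factor at a time so that the product $X\prod \mathcal{Z}_{\ell_i,k_i}$ is rewritten as a time integral plus stochastic integrals. Concretely, each $\mathcal{Z}_{\ell_i,k_i}$ satisfies the linear SDE $d\mathcal{Z}_{\ell_i,k_i} = -\varepsilon^{-2} d_{\ell_i}\lambda_{k_i}\mathcal{Z}_{\ell_i,k_i}\,dt + \varepsilon^{-1}d\tilde{W}_{\ell_i,k_i}$, so applying It\^o to the product $X\prod_{i}\mathcal{Z}_{\ell_i,k_i}$ produces: (a) the drift $X\,dX$-type term of order $\mathcal{O}(\varepsilon^{-r})$ coming from $dX=G\,dt$; (b) a drift term $-\varepsilon^{-2}\big(\sum_i d_{\ell_i}\lambda_{k_i}\big)X\prod_i\mathcal{Z}_{\ell_i,k_i}\,dt$ from the exponential decay; (c) quadratic-variation (It\^o-correction) terms $\varepsilon^{-2} q^{\ell_i,\ell_{i'}}_{k_i,k_{i'}} X\prod_{j\neq i,i'}\mathcal{Z}_{\ell_j,k_j}\,dt$ for each pair $i<i'$; and (d) martingale terms of the form $\varepsilon^{-1}X\prod_{j\neq i}\mathcal{Z}_{\ell_j,k_j}\,d\tilde{W}_{\ell_i,k_i}$. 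Solving the resulting identity for $\int_0^t X\prod_i \mathcal{Z}_{\ell_i,k_i}\,ds$ (by dividing through by the factor $\varepsilon^{-2}\sum_i d_{\ell_i}\lambda_{k_i}$) expresses the target integral in terms of lower-order products, against which induction on $N$ can be run.

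The key step is this inductive reduction. Dividing by $\varepsilon^{-2}\sum_i d_{\ell_i}\lambda_{k_i}$ turns the boundary term $[X\prod_i\mathcal{Z}_{\ell_i,k_i}]_0^t$ and the martingale terms into contributions carrying a prefactor $\varepsilon^2/\sum_i d_{\ell_i}\lambda_{k_i}$. Using the product bound (\ref{z3}) of Corollary \ref{coll2} for the boundary term, the martingale estimate (\ref{z3d}) of Lemma \ref{Lemma21} for the stochastic integrals, and the hypothesis $X=\mathcal{O}(\varepsilon^{-r})$, each of these is shown to be of order $\mathcal{O}(\varepsilon^{1-r})$ after absorbing the $\lambda_{k_i}$-powers into the coefficients $A^{\ell_1,\cdots,\ell_N}_{k_1,\cdots,k_N}$; the summability of these coefficients against $\prod_i\lambda_{k_i}^{\gamma_1}$ is exactly what Assumption \ref{W-C}, in particular the series condition (\ref{z10a}), guarantees. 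The It\^o-correction terms (c) reduce $N$ to $N-2$, and here the induction hypothesis applies: for $N$ odd one lands eventually on a single factor $\int_0^t X\mathcal{Z}_{\ell,k}\,ds$, which by the same division argument is $\mathcal{O}(\varepsilon^{1-r})$ and contributes no leading term, giving (\ref{z7}); for $N$ even one lands on $\int_0^t X\,ds$, and carefully tracking which pairs $(i,i')$ were contracted at each stage reproduces the sum over pairings $j\in Per(N)$ with the coefficient $q^{\ell,\ell'}_{k,k'}/(d_\ell\lambda_k+d_{\ell'}\lambda_{k'})$ in (\ref{z8}).

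The main obstacle will be the bookkeeping of the combinatorial structure in the even case: one must verify that the recursive contraction of pairs, when unwound, yields precisely the sum over all perfect matchings $Per(N)$ with the stated symmetric denominators, and that no pairing is counted with the wrong multiplicity (the factor $2^{-N/2}$ accounts for the ordering within each contracted pair). A secondary technical point is ensuring that all the $\mathcal{O}$-estimates are genuinely \emph{uniform} in the indices $\ell_i,k_i$, which requires that the constants produced by Lemma \ref{Lemma21} and Corollary \ref{coll2} do not depend on the indices — this is exactly the uniformity already asserted there — and that the leftover error coefficients $A^{\ell_1,\cdots,\ell_N}_{k_1,\cdots,k_N}$ inherit summability from (\ref{z10a}) uniformly across the finitely many choices of $N\leq m$ and $\ell\in\{1,\ldots,N\}^n$.
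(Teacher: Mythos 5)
Your plan is essentially the paper's own proof: apply It\^o's formula to $X\prod_{i}\mathcal{Z}_{\ell_i,k_i}$, solve for the time integral by dividing by $\varepsilon^{-2}\sum_i d_{\ell_i}\lambda_{k_i}$, control the boundary and martingale terms via Corollary \ref{coll2} and Lemma \ref{Lemma21} (absorbing the $\lambda$-powers into the $A$-coefficients, whose summability is Assumption \ref{W-C}), and induct from $N$ to $N-2$ through the It\^o-correction terms, treating the odd and even cases as you describe. The combinatorial point you flag is resolved by the elementary identity $\frac{1}{A+B}\bigl(\frac{1}{A}+\frac{1}{B}\bigr)=\frac{1}{AB}$ applied to the denominators of the contracted pairs, which converts the recursive global denominator $\sum_i d_{\ell_i}\lambda_{k_i}$ into the product of pairwise denominators appearing in (\ref{z8}).
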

We used $j\in Per(N)$\ if $j=(j_{1},\ldots ,j_{N})$ is a permutation of $%
\left\{ 1,\ldots ,N\right\} .$
\begin{remark}
The term%
\begin{equation*}
\sum\limits_{j\in Per(N)}\mathop{\textstyle \prod }\limits_{\eta=1}^{N/2} 
\frac{\emph{q}_{k_{j_{2\eta-1 }},k_{j_{2\eta }} }^{\ell_{j_{2\eta-1 }},\ell
_{j_{2\eta}}}}{d_{\ell _{j_{2\eta-1 }}} \lambda_{k_{j_{2\eta-1 }}}+d_{\ell
_{j_{2\eta}}}\lambda _{k_{j_{2\eta}}}}
\end{equation*}%
is summable over $k_{1},\cdots ,k_{N}$ by Condition (\ref{e4a}).
\end{remark}
Let us state explicitly some $A$'s appearing in the proof of the
theorem.
\begin{example}
For $N=1$ we have $A_{k}^{\ell }=\frac{1}{\lambda _{k}}\left( \emph{q}%
_{k,k}^{\ell ,\ell }\right) ^{\frac{1}{2}}, $ and for $N=2$%
\begin{equation*}
A_{k_{1},k_{2}}^{\ell _{1},\ell _{2}}=
\Big(\sum\limits_{i=1}^{2}d_{\ell _{i}}\lambda _{k_{i}}\Big)^{-1/2}
\Big(
\mathop{\textstyle \prod }\limits_{i=1}^{2}\lambda _{k_{i}}^{-1}q_{k_{i},k_{i}}^{\ell _{i},\ell _{i}}\Big)^{1/2},
\end{equation*}%
and for $N=3$%
\begin{equation*}
A_{k_{1},k_{2},k_{3}}^{\ell _{1},\ell _{2},\ell _{3}}=\Big(\frac{1}{%
\sum\limits_{i=1}^{3}d_{\ell _{i}}\lambda _{k_{i}}}
\mathop{\textstyle \prod }\limits_{i=1}^{3}\lambda _{k_{i}}^{-1}\emph{q}_{k_{i},k_{i}}^{\ell _{i},\ell
_{i}}\Big)^{\frac{1}{2}}+\sum\limits_{\substack{ j_{1},j_{2}=1  \\ j_{1}\neq
j_{2}}}^{3}\frac{\emph{q}_{k_{j_{1}},k_{j_{2}}}^{\ell _{j_{1}},\ell _{j_{2}}}%
}{d_{\ell _{j_{1}}}\lambda _{k_{j_{1}}}+d_{\ell _{j_{2}}}\lambda _{k_{j_{2}}}%
}\frac{\left( \emph{q}_{j,j}^{i,i}\right) ^{\frac{1}{2}}}{\lambda _{j}}.
\end{equation*}
For larger $N$ the terms have similar structure, but there are about $N/2$
many.
\end{example}
\begin{proof}
Fix a small $\delta _{0}<\frac{1}{N}$ for $N>1.$ First, recall $\vert
X\vert = \mathcal{O}(\varepsilon ^{-r}). $ For the first part we treat $%
N=1$ and $3$. The general case follows by induction.

For $N=1$ we apply It\^{o} formula to the term $X\mathcal{Z}_{i,j}$ to obtain%
\begin{equation*}
\int_{0}^{t}X\mathcal{Z}_{i,j}ds=-\frac{\varepsilon ^{2}}{d_{i}\lambda _{j}}%
X(t)\mathcal{Z}_{i,j}(t)+\frac{\varepsilon ^{2}}{d_{i}\lambda _{j}}%
\int_{0}^{t}G\mathcal{Z}_{i,j}ds+\frac{\varepsilon }{d_{i}\lambda _{j}}%
\int_{0}^{t}Xd\tilde{W}_{i,j}.
\end{equation*}%
Using Lemmas \ref{Lemma20}\ and Burkholder-Davis-Gundy theorem, yields%
\begin{eqnarray}
\int_{0}^{t}X\mathcal{Z}_{i,j}ds &=&\left( \emph{q}_{j,j}^{i,i}\right) ^{%
\frac{1}{2}}\Big[\frac{1}{\left( d_{i}\lambda _{j}\right) \lambda _{j}^{%
\frac{1}{2}-\frac{1}{2}\delta _{0}}}\mathcal{O}(\varepsilon ^{2-r-\delta
_{0}})+\frac{1}{d_{i}\lambda _{j}}\mathcal{O}(\varepsilon ^{1-r})\Big] 
\notag \\
&=&\frac{1}{\lambda _{j}}\left( \emph{q}_{j,j}^{i,i}\right) ^{\frac{1}{2}}%
\mathcal{O}(\varepsilon ^{1-r}).  \label{z5}
\end{eqnarray}%
For $N\in \{3,5,..\}$ we apply It\^{o} formula to the term $X%
\mathop{\textstyle \prod }\limits_{i=1}^{N}\mathcal{Z}_{\ell _{i},k_{i}}$ to
obtain%
\begin{eqnarray*}
\int_{0}^{t}X\mathop{\textstyle \prod }\limits_{i=1}^{N}\mathcal{Z}_{\ell
_{i},k_{i}}ds &=&\frac{1}{\sum\limits_{i=1}^{N}d_{\ell _{i}}\lambda _{k_{i}}}%
\left\{ \varepsilon ^{2}X\mathop{\textstyle \prod }\limits_{i=1}^{N}\mathcal{%
Z}_{\ell _{i},k_{i}}+\varepsilon ^{2}\int_{0}^{t}G\mathop{\textstyle \prod }%
\limits_{i=1}^{N}\mathcal{Z}_{\ell _{i},k_{i}}ds\right. \text{ \ \ \ \ \ \ \
\ \ } \\
&&+\varepsilon \sum\limits_{j=1}^{N}\int_{0}^{t}X\mathop{\textstyle \prod }%
\limits_{\substack{ i=1,  \\ i\neq j}}^{N}\mathcal{Z}_{\ell _{i},k_{i}}d%
\tilde{W}_{\ell _{j},k_{j}} \\
&&+\sum\limits_{j_{1}\neq j_{2}=1}^{N}\int_{0}^{t}X\mathop{\textstyle \prod }%
\limits_{\substack{ i=1,  \\ i\notin \{j_{1,}j_{2}\}}}^{N}\mathcal{Z}_{\ell
_{i},k_{i}}\left. d\tilde{W}_{\ell _{j_{1}},k_{j_{1}}}d\tilde{W}_{\ell
_{j_{2}},k_{j_{2}}}\right\} .
\end{eqnarray*}%
Using Corollary \ref{coll2} and Lemma \ref{Lemma21} to obtain%
\begin{eqnarray*}
\int_{0}^{t}X\mathop{\textstyle \prod }\limits_{i=1}^{N}\mathcal{Z}_{\ell
_{i},k_{i}}ds &=&\frac{\Big(\mathop{\textstyle \prod }\limits_{i=1}^{N}%
\lambda _{k_{i}}^{-1}\emph{q}_{k_{i},k_{i}}^{\ell _{i},\ell _{i}}\Big)^{%
\frac{1}{2}}}{\sum\limits_{i=1}^{N}d_{\ell _{i}}\lambda _{k_{i}}}\left\{ %
\mathop{\textstyle \prod }\limits_{i=1}^{N}\lambda _{k_{i}}^{\frac{1}{2}%
\delta _{0}}\mathcal{O}(\varepsilon ^{2-r-3\delta
_{0}})+\sum\limits_{i=1}^{N}\lambda _{k_{i}}^{\frac{1}{2}}\mathcal{O}%
(\varepsilon ^{1-r})\right\} \\
&&+\frac{1}{\sum\limits_{i=1}^{N}d_{\ell _{i}}\lambda _{k_{i}}}\sum\limits 
_{\substack{ j_{1},j_{2}=1,  \\ j_{1}\neq j_{2}}}^{N}\emph{q}%
_{k_{j_{1}},k_{j_{2}}}^{\ell _{j_{1}},\ell _{j_{2}}}\int_{0}^{t}X%
\mathop{\textstyle \prod }\limits_{\substack{ i=1,  \\ i\notin
\{j_{1,}j_{2}\}}}^{N}\mathcal{Z}_{\ell _{i},k_{i}}ds.
\end{eqnarray*}%
We use $\sum_{i=1}^{N}d_{\ell _{i}}\lambda _{k_{i}}\geq
c\prod_{i=1}^{N}\lambda _{k_{i}}^{1/N}$ with $c=\mathop{\textstyle \prod }%
_{i=1}^{N}d_{\ell _{j_{i}}}^{1/N}$ and the equivalence of norms in $%
\mathbb{R}
^{N}$ which implies for $C_{1},C_{2}>0$%
\begin{equation}
C_{1}\Big(\sum_{i=1}^{N}\lambda _{k_{i}}\Big)^{\frac{1}{2}}\leq
\sum_{i=1}^{N}\lambda _{k_{i}}^{\frac{1}{2}}\leq C_{2}\Big(%
\sum_{i=1}^{N}\lambda _{k_{i}}\Big)^{\frac{1}{2}}.  \label{e100}
\end{equation}%
Hence,%
\begin{eqnarray}
\int_{0}^{t}X\mathop{\textstyle \prod }\limits_{i=1}^{N}\mathcal{Z}_{\ell
_{i},k_{i}}ds &=&\Big(\frac{1}{\sum\limits_{i=1}^{N}d_{\ell _{i}}\lambda
_{k_{i}}}\mathop{\textstyle \prod }\limits_{i=1}^{N}\lambda _{k_{i}}^{-1}%
\emph{q}_{k_{i},k_{i}}^{\ell _{i},\ell _{i}}\Big)^{\frac{1}{2}}\mathcal{O}%
(\varepsilon ^{1-r})  \label{e101} \\
&&+\frac{1}{\sum\limits_{i=1}^{N}d_{\ell _{i}}\lambda _{k_{i}}}\sum\limits 
_{\substack{ j_{1},j_{2}=1  \\ j_{1}\neq j_{2}}}^{N}\emph{q}%
_{k_{j_{1}},k_{j_{2}}}^{\ell _{j_{1}},\ell _{j_{2}}}\int_{0}^{t}X%
\mathop{\textstyle \prod }\limits_{\substack{ i=1,  \\ i\notin
\{j_{1,}j_{2}\}}}^{N}\mathcal{Z}_{\ell _{i},k_{i}}ds.  \notag
\end{eqnarray}%
In the case $N=3,$\ for example, Equation (\ref{e101}) takes the form%
\begin{eqnarray*}
\int_{0}^{t}X\mathop{\textstyle \prod }\limits_{i=1}^{3}\mathcal{Z}_{\ell
_{i},k_{i}}ds &=&\Big(\frac{1}{\sum\limits_{i=1}^{3}d_{\ell _{i}}\lambda
_{k_{i}}}\mathop{\textstyle \prod }\limits_{i=1}^{3}\lambda _{k_{i}}^{-1}%
\emph{q}_{k_{i},k_{i}}^{\ell _{i},\ell _{i}}\Big)^{\frac{1}{2}}\mathcal{O}%
(\varepsilon ^{1-r}) \\
&&+\sum\limits_{\substack{ j_{1},j_{2}=1  \\ j_{1}\neq j_{2}}}^{3}\frac{%
\emph{q}_{k_{j_{1}},k_{j_{2}}}^{\ell _{j_{1}},\ell _{j_{2}}}}{d_{\ell
_{j_{1}}}\lambda _{k_{j_{1}}}+d_{\ell _{j_{2}}}\lambda _{k_{j_{2}}}}\frac{%
\left( \emph{q}_{j,j}^{i,i}\right) ^{\frac{1}{2}}}{\lambda _{j}}\mathcal{O}%
(\varepsilon ^{1-r}),
\end{eqnarray*}%
where we used Equation (\ref{z5}) and $d_{\ell _{j_{1}}}\lambda
_{k_{j_{1}}}+d_{\ell _{j_{2}}}\lambda _{k_{j_{2}}}\leq \sum_{i=1}^{3}d_{\ell
_{i}}\lambda _{k_{i}}$ for $j_{1},j_{2}\in \{1,\ 2,\ 3\}.$ The general case
for $N\in \{5,7,\cdots \}$ follows similarly.

We prove the second part only for $N=2$ and we can proceed by induction. 
Applying
It\^{o} formula to $X\cdot\mathop{\textstyle \prod }_{i=1}^{2}\mathcal{Z}%
_{\ell _{i},k_{i}}$ and integrating from $0$ to $t,$ we obtain 
\begin{align*}
\int_{0}^{t}X\mathop{\textstyle \prod }\limits_{i=1}^{2}\mathcal{Z}_{\ell
_{i},k_{i}}ds &=\frac{1}{\sum\limits_{i=1}^{2}d_{\ell _{i}}\lambda _{k_{i}}}%
\left\{ -\varepsilon ^{2}X(t)\mathop{\textstyle \prod }\limits_{i=1}^{2}%
\mathcal{Z}_{\ell _{i},k_{i}}(t)+\varepsilon ^{2} \int_{0}^{t}G%
\mathop{\textstyle \prod }\limits_{i=1}^{2}\mathcal{Z}_{\ell
_{i},k_{i}}ds\right. \\
&+\varepsilon \sum\limits_{j=1}^{2}\int_{0}^{t}X\mathop{\textstyle \prod }%
\limits_{\substack{ i=1 \\ i\neq j}}^{2}\mathcal{Z}_{\ell _{i},k_{i}}d\tilde{%
W}_{\ell _{j},k_{j}}+\int_{0}^{t}X\left. d\tilde{W}_{\ell
_{j_{1}},k_{j_{1}}}d\tilde{W}_{\ell _{j_{2}},k_{j_{2}}}\right\} .
\end{align*}%
Using Corollary \ref{coll2} and Lemma \ref{Lemma21} to obtain%
\begin{eqnarray*}
\int_{0}^{t}X\mathop{\textstyle \prod }\limits_{i=1}^{2}\mathcal{Z}_{\ell
_{i},k_{i}}ds &=&\frac{1}{\sum\limits_{i=1}^{2}d_{\ell _{i}}\lambda _{k_{i}}}%
\left( \mathop{\textstyle \prod }\limits_{i=1}^{2}\lambda _{k_{i}}^{-1}\emph{%
q}_{k_{i},k_{i}}^{\ell _{i},\ell _{i}}\right) ^{\frac{1}{2}}\left\{ \mathcal{%
O}(\varepsilon ^{1-r-2\delta _{0}})\mathop{\textstyle \prod }%
\limits_{i=1}^{2}\lambda _{k_{i}}^{\frac{1}{2}\delta _{0}}\right. \\
&&+\left. \mathcal{O}(\varepsilon ^{1-r})\sum\limits_{i=1}^{2}\lambda
_{k_{i}}^{\frac{1}{2}}\right\} +\frac{\emph{q}_{k_{1},k_{2}}^{\ell _{1},\ell
_{2}}}{\sum\limits_{i=1}^{2}d_{\ell _{i}}\lambda _{k_{i}}}\int_{0}^{t}Xds.
\end{eqnarray*}
Using (\ref{e100}) with $\Big( \sum_{i=1}^{2}d_{\ell _{i}}\lambda _{k_{i}}%
\Big) ^{\frac{1}{2}}\geq c\mathop{\textstyle \prod }_{i=1}^{2}\lambda
_{k_{i}}^{\frac{1}{4}}$\ we obtain for $\delta _{0}<\frac{1}{2}$ 
\begin{equation*}
\int_{0}^{t}X\mathop{\textstyle \prod }\limits_{i=1}^{2}\mathcal{Z}_{\ell
_{i},k_{i}}ds=\Big(\frac{1}{\sum\limits_{i=1}^{2}d_{\ell _{i}}\lambda
_{k_{i}}}\mathop{\textstyle \prod }\limits_{i=1}^{2}\lambda _{k_{i}}^{-1}%
\emph{q}_{k_{i},k_{i}}^{\ell _{i},\ell _{i}}\Big)^{\frac{1}{2}}\mathcal{O}%
(\varepsilon ^{1-r})+\frac{\emph{q}_{k_{1},k_{2}}^{\ell _{1},\ell _{2}}}{%
\sum\limits_{i=1}^{2}d_{\ell _{i}}\lambda _{k_{i}}}\int_{0}^{t}Xds.
\end{equation*}
For $N\in \{4,6,..\}$ we apply It\^{o} formula to the term $X%
\mathop{\textstyle \prod }\limits_{i=1}^{N}\mathcal{Z}_{\ell _{i},k_{i}}$ to
obtain%
\begin{eqnarray*}
\int_{0}^{t}X\mathop{\textstyle \prod }\limits_{i=1}^{N}\mathcal{Z}_{\ell
_{i},k_{i}}ds &=&\frac{1}{\sum\limits_{i=1}^{N}d_{\ell _{i}}\lambda _{k_{i}}}%
\left\{ -\varepsilon ^{2}X(t)\mathop{\textstyle \prod }\limits_{i=1}^{N}%
\mathcal{Z}_{\ell _{i},k_{i}}(t)+\varepsilon ^{2}\int_{0}^{t}G%
\mathop{\textstyle \prod }\limits_{i=1}^{N}\mathcal{Z}_{\ell
_{i},k_{i}}ds\right. \\
&& \qquad +\varepsilon \sum\limits_{j=1}^{N}\int_{0}^{t}X\mathop{\textstyle
\prod }\limits_{\substack{ i=1,  \\ i\neq j}}^{N}\mathcal{Z}_{\ell
_{i},k_{i}}d\tilde{W}_{\ell _{j},k_{j}} \\
&&\qquad +\sum\limits_{j_{1}\neq j_{2}=1}^{N}\int_{0}^{t}X\mathop{\textstyle
\prod }\limits_{\substack{ i=1,  \\ i\neq j_{1}\neq j_{2}}}^{N}\mathcal{Z}%
_{\ell _{i},k_{i}}\left. d\tilde{W}_{\ell _{j_{1}},k_{j_{1}}}d\tilde{W}%
_{\ell _{j_{2}},k_{j_{2}}}\right\} .
\end{eqnarray*}%
Using Corollary \ref{coll2} and Lemma \ref{Lemma21} to obtain as in the odd
case before%
\begin{eqnarray*}
\int_{0}^{t}X\mathop{\textstyle \prod }\limits_{i=1}^{N}\mathcal{Z}_{\ell
_{i},k_{i}}ds &=&\Big(\frac{1}{\sum\limits_{i=1}^{N}d_{\ell _{i}}\lambda
_{k_{i}}}\mathop{\textstyle \prod }\limits_{i=1}^{N}\lambda _{k_{i}}^{-1}%
\emph{q}_{k_{i},k_{i}}^{\ell _{i},\ell _{i}}\Big)^{\frac{1}{2}}\mathcal{O}%
(\varepsilon ^{1-r}) \\
&&+\sum\limits _{\substack{ j_{1},j_{2}=1  \\ j_{1}\neq j_{2}}}^{N} \frac{%
\emph{q}_{k_{j_{1}},k_{j_{2}}}^{\ell _{j_{1}},\ell _{j_{2}}} }{
\sum\limits_{i=1}^{N}d_{\ell _{i}}\lambda _{k_{i}}}\cdot \int_{0}^{t}X \!\!%
\mathop{\textstyle \prod }\limits_{\substack{ i=1,  \\ i\notin
\left\{j_{1},j_{2}\right\} }}^{N}\!\! \mathcal{Z}_{\ell _{i},k_{i}}ds.
\end{eqnarray*}%
The first factor in the sum is summable over $j_1$ and $j_2$ by Condition (%
\ref{e4a}). Now, we can proceed by induction and apply the assertion for $N-2
$ to obtain (\ref{z8}).
\end{proof}

\begin{lemma}
\label{Lemmach3.4} Under Assumption \ref{assg}, \ref{Wiener} and \ref{W-C}
let $X$ be as in Lemma \ref{Lemma4}. Then, for $\ell \in \mathbb{N}%
_{0}^{n}$ with $m \geq\vert \ell \vert \geq 1$, we obtain:

1- If one of the $\ell _{i}$ is odd,\ then%
\begin{equation}
P_{c}\int_{0}^{t}X( \mathcal{Z}^{s}) ^{\ell }d\tau =\mathcal{O}(\varepsilon
^{1-r}).  \label{e20}
\end{equation}

2-If all $\ell _{i}$\ are even, then there is a constant $C_{\ell }$\ such
that%
\begin{equation}
P_{c}\int_{0}^{t}X( \mathcal{Z}^{s}) ^{\ell }d\tau =C_{\ell
}\int_{0}^{t}Xd\tau +\mathcal{O}(\varepsilon ^{1-r}),  \label{e21}
\end{equation}%
where $C_{\ell }$ is given by%
\begin{equation}
C_{\ell }=\mathop{\textstyle \prod }\limits_{i=1}^{n} \Big(\frac{1}{2^{\ell
_{i}/2}d_{i}^{\ell_{i}/2}} \sum_{k_{1},.,k_{\ell _{i}}=1}^{\infty }
\sum\limits_{j\in Per(\ell _{i})} \mathop{\textstyle \prod }%
\limits_{\eta=1}^{\ell_i/2} \frac{\emph{q}_{k_{j_{2\eta-1
}},k_{j_{2\eta}}}^{i,i}}{ \lambda _{k_{j_{2\eta-1}}}+\lambda _{k_{j_{2\eta}}}%
} P_{c}\mathop{\textstyle \prod }\limits_{\eta=1}^{\ell_{i}}g_{k_{\eta}}\Big)%
.  \label{z10}
\end{equation}
\end{lemma}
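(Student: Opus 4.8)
The plan is to reduce the claim to the scalar averaging Lemma~\ref{Lemma4} by expanding the spatial object $(\mathcal{Z}^{s})^{\ell}$ in the eigenbasis. Writing $(\mathcal{Z}^{s})^{\ell}=\prod_{i=1}^{n}\mathcal{Z}_{i}^{\ell_{i}}$ and inserting the Fourier expansion (\ref{E40b}) into each factor, I would express $(\mathcal{Z}^{s})^{\ell}$ as a multiple series whose generic summand is a product of $N:=|\ell|$ fast OU-processes $\mathcal{Z}_{i,k}$ --- arranged so that exactly $\ell_{i}$ of them carry the species index $i$ --- times the corresponding product of $N$ eigenfunctions $g_{k}$. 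Applying $P_{c}$ and $\int_{0}^{t}X\,d\tau$ and interchanging these with the series yields
\[
P_{c}\int_{0}^{t}X(\mathcal{Z}^{s})^{\ell}\,d\tau=\sum\Big(P_{c}\prod g_{k}\Big)\int_{0}^{t}X\prod\mathcal{Z}_{i,k}\,d\tau ,
\]
so that every summand is exactly of the type treated in Lemma~\ref{Lemma4}.

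Next I would invoke Lemma~\ref{Lemma4} termwise. Suppose first that some $\ell_{i}$ is odd. If $N$ is odd, the odd case (\ref{z7}) applies directly and gives a summand of the form $A\cdot\mathcal{O}(\varepsilon^{1-r})$. If $N$ is even, the leading term of (\ref{z8}) is a sum over pairings of the $N$ factors weighted by covariances $q^{\ell_{j_{2\eta-1}},\ell_{j_{2\eta}}}$; the decisive structural input is that, by (\ref{E10d}), these vanish whenever the two species indices differ, so only pairings matching factors inside a common species group survive. Since the group of species $i$ then contains an odd number $\ell_{i}$ of factors, no such perfect matching exists, the leading term vanishes, and only $\mathcal{O}(\varepsilon^{1-r})$ remains. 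In both sub-cases this proves part~1. If instead all $\ell_{i}$ are even, then $N$ is even and within-species matchings do exist; because distinct species are matched independently, the pairing sum factorizes over $i$, and collecting the resulting factors $q^{i,i}/(d_{i}\lambda_{k}+d_{i}\lambda_{k'})=d_{i}^{-1}q^{i,i}/(\lambda_{k}+\lambda_{k'})$, the prefactor $2^{-N/2}=\prod_{i}2^{-\ell_{i}/2}$, and the spatial averaging weights $P_{c}\prod g_{k}$ produces exactly the constant $C_{\ell}$ of (\ref{z10}) multiplied by $\int_{0}^{t}X\,d\tau$. This proves part~2.

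The final and most delicate step is to control the error after summing over all modes. I would estimate $|P_{c}\prod g_{k}|\leq\prod\|g_{k}\|_{\infty}\leq C\prod\lambda_{k}^{\gamma_{1}}$ via Assumption~\ref{assg}, and then sum the error coefficients $A^{\ell_{1},\cdots}_{k_{1},\cdots}$ against these weights; by the summability $\sum A\prod\lambda_{k}^{\gamma_{1}}<\infty$ furnished by Lemma~\ref{Lemma4} (which itself rests on Assumption~\ref{W-C}, i.e. (\ref{z10a})), the accumulated error stays $\mathcal{O}(\varepsilon^{1-r})$. Two points require care here: justifying the interchange of the infinite mode-series with $P_{c}$ and the time integral, and exploiting that the $\mathcal{O}$-constants in Lemma~\ref{Lemma4} are \emph{uniform} in all indices $k_{j},\ell_{j}$ --- without this uniformity one could not pull $\mathcal{O}(\varepsilon^{1-r})$ out of the series. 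Convergence of the defining series for $C_{\ell}$ itself is guaranteed by (\ref{e4a}) together with the same $g_{k}$-bound.
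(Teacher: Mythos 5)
Your proposal is correct and follows essentially the same route as the paper: expand $(\mathcal{Z}^{s})^{\ell}$ in the eigenbasis into a multiple series of $N=|\ell|$ OU-factors, apply Lemma~\ref{Lemma4} termwise, use the vanishing of cross-species covariances from (\ref{E10d}) to kill the leading pairing term when some $\ell_{i}$ is odd, factorize the within-species pairings to obtain $C_{\ell}$ when all $\ell_{i}$ are even, and control the summed error via Assumption~\ref{assg} and the summability of the $A$'s. The only (cosmetic) difference is that you argue the general-$n$ case directly through the perfect-matching structure, whereas the paper writes out $n=1,2$ in detail and appeals to independence of the species-wise factors for $n>2$.
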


\begin{proof}
From the definition of $\mathcal{Z}^{s}$ (cf. (\ref{E40}), we obtain%
\begin{equation}
( \mathcal{Z}^{s}) ^{\ell }=\mathop{\textstyle \prod }\limits_{i=1}^{n}%
\mathcal{Z}_{i}^{\ell _{i}}=\mathop{\textstyle \prod }\limits_{i=1}^{n}\Big(%
\sum\limits_{j_{1},\cdots ,j_{\ell _{i}}=1}^{\infty }\mathop{\textstyle
\prod }\limits_{k=1}^{\ell _{i}}\mathcal{Z}_{i,j_{k}}g_{j_{k}}\Big).
\label{z10d}
\end{equation}
We focus in the proof on the case $n=1$ and $n=2$ as they are needed for our
applications. The general case follows similarly but it is technically more
involved. For $n=1$ we have $\ell =\ell _{1}$ and 
\begin{equation*}
P_{c}\int_{0}^{t}X( \mathcal{Z}^{s}) ^{\ell }d\tau
=P_{c}\sum\limits_{j_{1},\cdots ,j_{\ell }=1}^{\infty }\mathop{\textstyle
\prod }\limits_{k=1}^{\ell }g_{j_{k}}\int_{0}^{t}X\mathop{\textstyle \prod }%
\limits_{k=1}^{\ell }\mathcal{Z}_{1,j_{k}}d\tau .
\end{equation*}%
Now we consider two cases. First if $|\ell| $ is odd, then Lemma \ref{Lemma4}
with $N=|\ell| $ yields 
\begin{equation*}
\int_{0}^{t}X( \mathcal{Z}^{s}) ^{\ell }d\tau =\sum\limits_{k_{1}=1}^{\infty
}\cdots \sum\limits_{k_{\ell }=1}^{\infty }A_{k_{1},\cdots ,k_{\ell
}}^{1,\cdots ,1}\mathop{\textstyle \prod }\limits_{j=1}^{\ell }g_{k_j}\cdot 
\mathcal{O}(\varepsilon ^{1-r}).
\end{equation*}%
And then as the $A$'s are summable%
\begin{equation*}
P_{c}\int_{0}^{t}X( \mathcal{Z}^{s}) ^{\ell }d\tau =\mathcal{O}(\varepsilon
^{1-r}).
\end{equation*}%
Secondly, if $|\ell| $ is even, then Lemma \ref{Lemma4} implies 
\begin{eqnarray*}
\int_{0}^{t}X( \mathcal{Z}^{s}) ^{\ell }d\tau &=&\sum_{k_{1},\cdots ,k_{\ell
}=1}^{\infty } \frac{1}{2^{\frac{\ell }{2}}d_{1}^{\frac{\ell }{2}}}
\sum_{j\in Per(\ell )} \mathop{\textstyle \prod }\limits_{\eta=1}^{|\ell|/2} 
\frac{\emph{q}_{k_{j_{2\eta-1 }},k_{j_{2\eta}}}^{1,1}}{ \lambda
_{k_{j_{2\eta-1 }}}+\lambda _{k_{j_{2\eta}}}}\mathop{\textstyle \prod }%
\limits_{\eta=1}^{\ell }g_{k_\eta} \int_{0}^{t}X ds \\
&&+\sum\limits_{k_{1}=1}^{\infty }\cdots \sum_{k_{\ell
}=1}^{\infty}A_{k_{1},\cdots ,k_{\ell }}^{1,\cdots ,1} \mathop{\textstyle
\prod }\limits_{\eta=1}^{\ell }g_{k_\eta} \mathcal{O}(\varepsilon ^{1-r}).
\end{eqnarray*}%
As the $A$'s are summable%
\begin{eqnarray*}
P_{c}\!\int_{0}^{t}\!X( \mathcal{Z}^{s}) ^{\ell }d\tau\!\!
&=&\!\!\!\!\sum\limits_{k_{1},\cdots ,k_{\ell }=1}^{\infty }\frac{1}{2^{%
\frac{\ell }{2}}d_{1}^{\frac{\ell }{2}}}\sum\limits_{j\in Per(\ell )} %
\mathop{\textstyle \prod }\limits_{\eta=1}^{|\ell|/2} \frac{\emph{q}%
_{k_{j_{2\eta-1 }},k_{j_{2\eta}}}^{1,1}}{ \lambda
_{k_{j_{2\eta-1}}}\!+\lambda _{k_{j_{2\eta}}}}P_{c}\mathop{\textstyle \prod }%
\limits_{\eta=1}^{\ell }g_{k_\eta} \int_{0}^{t}Xds \\
&& + \mathcal{O}(\varepsilon ^{1-r}).
\end{eqnarray*}%
For $n=2,$ we have $N=|\ell| =\ell _{1}+\ell _{2}$ and from Equation (\ref%
{z10d}) 
\begin{equation*}
\int_{0}^{t}X( \mathcal{Z}^{s}) ^{\ell }d\tau =\sum\limits_{j_{1}=1}^{\infty
} \cdots \sum\limits_{j_{|\ell|}=1}^{\infty } \int_{0}^{t}X%
\mathop{\textstyle \prod }\limits_{k=1}^{|\ell| }\mathcal{Z}%
_{i_{k},j_{k}}d\tau \mathop{\textstyle \prod }\limits_{k=1}^{|%
\ell|}g_{j_{k}},
\end{equation*}%
with $i_{1}=\cdots =i_{\ell _{1}}=1$ and $i_{\ell _{1}+1}=\cdots
=i_{|\ell|}=2.$ Similarly to the first part, we consider two cases. First if 
$|\ell|$ is odd, then we apply Lemma \ref{Lemma4} to obtain%
\begin{equation*}
\int_{0}^{t}X( \mathcal{Z}^{s}) ^{\ell }d\tau =\sum\limits_{j_{1}=1}^{\infty
}\cdots \sum\limits_{j_{|\ell|}=1}^{\infty } A_{k_{j_{1}},\cdots
,k_{j_{|\ell| }}}^{i_{1},\cdots ,i_{|\ell| }}\mathop{\textstyle \prod }%
\limits_{k=1}^{|\ell| }g_{j_{k}}\mathcal{O}(\varepsilon ^{1-r}).
\end{equation*}%
As the $A$'s are summable%
\begin{equation*}
P_{c}\int_{0}^{t}X( \mathcal{Z}^{s}) ^{\ell }d\tau =\mathcal{O}(\varepsilon
^{1-r}).
\end{equation*}%
In the second case, when $|\ell| $ is even, we apply Lemma \ref{Lemma4} and
analogously to the first case we obtain%
\begin{eqnarray*}
\int_{0}^{t}X( \mathcal{Z}^{s}) ^{\ell }d\tau \!\!\! &=&\!\!\!\!\!\!\!\!
\sum_{ j_{1},\ldots,j_{|\ell|}=1}^{\infty } \frac{1}{2^{\frac{|\ell| }{2}}}
\sum_{j\in Per(|\ell|)} \mathop{\textstyle \prod }\limits_{\eta=1
}^{|\ell|/2} \frac{\emph{q}_{j_{k_{2\eta-1}},j_{k_{2\eta}}}^{i_{k_{2\eta-1
}},i_{k_{2\eta}}}}{ d_{i_{k_{2\eta-1}}}\!\lambda _{j_{k_{2\eta-1
}}}\!\!+d_{i_{k_{2\eta}}}\!\lambda _{j_{k_{2\eta}}}}\mathop{\textstyle \prod
}\limits_{k=1}^{\vert \ell\vert }g_{j_{k}}\int_{0}^{t}Xds \\
&&+\sum\limits_{j_{1}=1}^{\infty }\cdots \sum\limits_{j_{|\ell|}=1}^{\infty
} A_{k_{j_{1}},\cdots ,k_{j_{|\ell| }}}^{i_{1},\cdots ,i_{|\ell| }}%
\mathop{\textstyle \prod }\limits_{k=1}^{|\ell| }g_{j_{k}}\mathcal{O}%
(\varepsilon ^{1-r}).
\end{eqnarray*}%
We obtain%
\begin{eqnarray*}
P_{c}\int_{0}^{t}X( \mathcal{Z}^{s}) ^{\ell }d\tau &=&\!\!\!\!\!\!\!\!
\sum_{ j_{1},\ldots,j_{|\ell|}=1}^{\infty } \frac{1}{2^{\frac{|\ell| }{2}}}%
\sum_{j\in Per(|\ell|)} \mathop{\textstyle \prod }\limits_{\eta=1
}^{|\ell|/2} \frac{\emph{q}_{j_{k_{2\eta-1}},j_{k_{2\eta}}}^{i_{k_{2\eta-1
}},i_{k_{2\eta}}}}{ d_{i_{k_{2\eta-1}}}\!\lambda _{j_{k_{2\eta-1
}}}\!\!+d_{i_{k_{2\eta}}}\!\lambda _{j_{k_{2\eta}}}} \\
&&\times P_{c}\Big(\mathop{\textstyle \prod }\limits_{k=1}^{|\ell| }g_{j_{k}}%
\Big)\int_{0}^{t}Xds+\mathcal{O}(\varepsilon ^{1-r}).
\end{eqnarray*}%
We can distinguish between two cases when $|\ell| $ is even. First one of $%
\ell _{1}$ and $\ell _{2}$\ is odd. Here $\emph{q}_{j_{\ell _{1}},j_{\ell
_{1}+1}}^{i_{\ell _{1}},i_{\ell _{1}+1}}=0,$ where $i_{\ell _{1}}=1$ and $%
i_{\ell _{1}+1}=2.$ Thus 
\begin{equation*}
P_{c}\int_{0}^{t}X( \mathcal{Z}^{s}) ^{\ell }d\tau =\mathcal{O}(\varepsilon
^{1-r}).
\end{equation*}%
In the second case when $\ell _{1}$ and $\ell _{2}$\ are both even, we have%
\begin{eqnarray*}
\lefteqn{P_{c}\int_{0}^{t}X( \mathcal{Z}^{s}) ^{\ell }d\tau } \\
&=&P_{c}\!\!\! \times \sum\limits_{j_{1}=1}^{\infty } \ldots
\sum\limits_{j_{\ell _{1}}=1}^{\infty } \frac{1}{(2d_{1})^{\frac{\ell _{1}}{2%
}}}\sum\limits_{j\in Per(\ell_{1})} \mathop{\textstyle \prod }\limits_{\eta
= 1}^{\ell _{1}/2} \frac{{q}_{j_{k_{2\eta-1 }},j_{k_{2\eta}}}^{1,1}}{
\lambda _{j_{k_{2\eta-1}}}+\lambda _{j_{k_{2\eta}}}} \mathop{\textstyle
\prod }\limits_{k=1}^{\ell _{1}}g_{j_{k}} \\
&& \times \sum_{j_{1}=1}^{\infty } \ldots \sum_{j_{\ell _{2}}=1}^{\infty } 
\frac{1}{(2d_{2})^{\frac{\ell _{2}}{2}}} \sum_{j\in Per(\ell_{2})} %
\mathop{\textstyle \prod }\limits_{\eta= 1}^{\ell _{2}/2} \frac{%
q_{j_{k_{2\eta-1 }},j_{k_{2\eta}}}^{2,2}}{ \lambda
_{j_{k_{2\eta-1}}}+\lambda _{j_{k_{2\eta}}}} \mathop{\textstyle \prod }%
\limits_{k=1}^{\ell_{2}}g_{j_{k}} \int_{0}^{t}Xds \\
&& \quad +\ \mathcal{O}(\varepsilon ^{1-r}) \\
&=&\mathop{\textstyle \prod }_{i=1}^{2} \sum_{j_{1}=1}^{\infty } \ldots
\sum_{j_{\ell _{i}}=1}^{\infty } \frac{1}{(2d_i)^{\frac{\ell _{i}}{2}}}%
\sum\limits_{j\in Per(\ell _{1})} \mathop{\textstyle \prod }\limits_{\eta=
1}^{\ell _{i}/2} \frac{q_{j_{k_{2\eta-1 }},j_{k_{2\eta}}}^{2,2}}{ \lambda
_{j_{k_{2\eta-1}}}+\lambda _{j_{k_{2\eta}}}} \int_{0}^{t}Xds \cdot P_{c}%
\mathop{\textstyle \prod }\limits_{k=1}^{|\ell|}g_{j_{k}} \\
&& \quad +\ \mathcal{O}(\varepsilon ^{1-r}).
\end{eqnarray*}%
The general case for $n>2$ follows in a similar way, as the random variables 
$\Big(\sum\limits_{j_{1},\cdots ,j_{\ell _{i}}=1}^{\infty }%
\mathop{\textstyle \prod }\limits_{k=1}^{\ell _{i}}\mathcal{Z}%
_{i,j_{k}}g_{j_{k}}\Big)_{i=1,2,...,n}$\ are independent, and we can thus
glue together the individual averaging results as above.
\end{proof}


\section{Proof of the Approximation Theorem I}

\begin{lemma}
\label{Lemma10}Let Assumptions \ref{Wiener}, \ref{assg} and \ref{Poly} hold.
Then%
\begin{equation}
a_{i}(t)=a_{i}(0)+\int_{0}^{t}\mathcal{F}_{i}(a)d\tau +\sum_{\left\vert \ell
\right\vert =2,4,..}\frac{C_{\ell }}{\ell !}\int_{0}^{t}D^{\ell }\mathcal{F}%
_{i}(a)d\tau +\tilde{R}(t),  \label{Ampli}
\end{equation}%
where $C_{\ell }$ was defined in (\ref{z10}) and the error  is
bounded by 
$
\tilde{R}=\mathcal{O}(\varepsilon ^{1-2m\kappa -\kappa _{0}}).  
$
\end{lemma}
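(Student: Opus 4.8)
The plan is to start from the exact identity (\ref{E8}) for the dominant mode, expand the polynomial nonlinearity about $a$, replace the fluctuation $\psi$ by the fast OU-process $\mathcal{Z}^{s}$, and then feed each resulting monomial into the averaging Lemma \ref{Lemmach3.4}. Writing (\ref{E8}) componentwise gives $a_{i}(t)=a_{i}(0)+\int_{0}^{t}P_{c}\mathcal{F}_{i}(a+\psi)\,d\tau$. Since $\mathcal{F}_{i}$ is a polynomial of degree at most $m$ (Assumption \ref{Poly}), Taylor's formula is \emph{exact} and finite,
\[
\mathcal{F}_{i}(a+\psi)=\mathcal{F}_{i}(a)+\sum_{1\le|\ell|\le m}\frac{D^{\ell}\mathcal{F}_{i}(a)}{\ell!}\,\psi^{\ell},
\]
and because $a\in\mathcal{N}$ is spatially constant we have $P_{c}\mathcal{F}_{i}(a)=\mathcal{F}_{i}(a)$. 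Substituting yields
\[
a_{i}(t)=a_{i}(0)+\int_{0}^{t}\mathcal{F}_{i}(a)\,d\tau+\sum_{1\le|\ell|\le m}\frac{1}{\ell!}\,P_{c}\int_{0}^{t}D^{\ell}\mathcal{F}_{i}(a)\,\psi^{\ell}\,d\tau,
\]
so everything reduces to analysing the integrals with $X:=D^{\ell}\mathcal{F}_{i}(a)$.

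Next I would replace $\psi$ by $\mathcal{Z}^{s}$. Decompose $\psi=e^{\varepsilon^{-2}\tau\mathcal{A}}\psi(0)+\mathcal{Z}^{s}+R$, where by Lemma \ref{Lemma1} with $\rho=0$ the remainder obeys $R=\mathcal{O}(\varepsilon^{2-m\kappa})$. Expanding $\psi^{\ell}$ multinomially separates the pure term $(\mathcal{Z}^{s})^{\ell}$ from cross terms, each carrying at least one factor of the decaying initial contribution or of $R$. On $[0,\tau^{\ast}]$ the stopping time (Definition \ref{stopping time}) forces $|a|\le C\varepsilon^{-\kappa}$, so $X=\mathcal{O}(\varepsilon^{-(m-|\ell|)\kappa})$; combined with Lemma \ref{Lemma2} (giving $\mathcal{Z}^{s}=\mathcal{O}(\varepsilon^{-\kappa_{0}})$), the integrated decay estimate Lemma \ref{Lemma3}, and H\"older's inequality, each cross term is bounded by $\mathcal{O}(\varepsilon^{1-2m\kappa-\kappa_{0}})$ and absorbed into $\tilde{R}$.

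For the pure term I would apply Lemma \ref{Lemmach3.4} to $P_{c}\int_{0}^{t}X(\mathcal{Z}^{s})^{\ell}\,d\tau$. One first verifies that $X=D^{\ell}\mathcal{F}_{i}(a)$ meets the hypotheses of Lemma \ref{Lemma21}: from $\dot{a}=\mathcal{F}^{c}(a+\psi)=\mathcal{O}(\varepsilon^{-m\kappa})$ one obtains $dX=G\,dT$ with both $X$ and $G$ of order $\varepsilon^{-r}$, $r=(2m-|\ell|-1)\kappa$. Lemma \ref{Lemmach3.4} then shows the contribution is $\mathcal{O}(\varepsilon^{1-r})$ whenever some $\ell_{i}$ is odd, while for $\ell$ with all components even it equals $C_{\ell}\int_{0}^{t}X\,d\tau$ up to $\mathcal{O}(\varepsilon^{1-r})$, with $C_{\ell}$ exactly as in (\ref{z10}). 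Collecting the even-$\ell$ main terms reproduces $\sum_{|\ell|=2,4,\dots}\frac{C_{\ell}}{\ell!}\int_{0}^{t}D^{\ell}\mathcal{F}_{i}(a)\,d\tau$ (with the convention $C_{\ell}=0$ when some $\ell_{i}$ is odd), and since $r\le(2m-2)\kappa$ all averaging remainders, together with the cross terms, are dominated by $\varepsilon^{1-2m\kappa-\kappa_{0}}$, yielding $\tilde{R}=\mathcal{O}(\varepsilon^{1-2m\kappa-\kappa_{0}})$.

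The hard part will be the uniform bookkeeping of the $\varepsilon$-exponents: one must confirm that $X=D^{\ell}\mathcal{F}_{i}(a)$ genuinely satisfies the drift condition of Lemma \ref{Lemma21} on $[0,\tau^{\ast}]$ using only the a priori bound $|a|\le\varepsilon^{-\kappa}$, and that the worst case over all $1\le|\ell|\le m$ of both the cross-term estimates and the averaging remainders stays below the threshold $\varepsilon^{1-2m\kappa-\kappa_{0}}$. The factor $2m\kappa$ arises precisely from pairing the $\varepsilon^{-(m-|\ell|)\kappa}$ size of $X$ with the $\varepsilon^{-m\kappa}$ size of its drift $\dot{a}$.
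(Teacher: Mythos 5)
Your proposal is correct and follows essentially the same route as the paper: substitute the decomposition $\psi=e^{\varepsilon^{-2}t\mathcal{A}}\psi(0)+\mathcal{Z}^{s}+R$ from Lemma \ref{Lemma1} into the equation for $a$, Taylor-expand the polynomial $\mathcal{F}_{i}$, control the terms carrying $y$ or $R$ via Lemmas \ref{Lemma2} and \ref{Lemma3}, and apply the averaging Lemma \ref{Lemmach3.4} to the pure $(\mathcal{Z}^{s})^{\ell}$ terms. The only (cosmetic) difference is that the paper performs two nested Taylor expansions (first in powers of $y+R$ about $a+\mathcal{Z}^{s}$, then in powers of $\mathcal{Z}^{s}$ about $a$) whereas you expand once about $a$ and then multinomially in $\mathcal{Z}^{s}+y+R$; your exponent bookkeeping for $X=D^{\ell}\mathcal{F}_{i}(a)$ and its drift is actually more explicit than the paper's.
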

\begin{proof}
The mild formulation of (\ref{E10}) and Lemma \ref{Lemma1} with $\rho=0 
$ yields 
\begin{equation}
\psi (t)=\mathcal{Z}^{s}(t)+e^{\varepsilon ^{-2}t\mathcal{A}}\psi (0)+%
\mathcal{O}(\varepsilon ^{2-m\kappa })=:\mathcal{Z}^{s}(t)+y(t)+R(t),
\label{E100}
\end{equation}
where 
\begin{equation*}
y(t)=e^{\varepsilon ^{-2}t\mathcal{A}}\psi (0)\text{ and }R(t)=\mathcal{O}%
(\varepsilon ^{2-m\kappa }).
\end{equation*}
Substituting from (\ref{E100}) into (\ref{E10a}), yields 
\begin{equation}
a_{i}(t)=a_{i}(0)+\int_{0}^{t}\mathcal{F}_{i}(a+\mathcal{Z}^{s}+y+R)(\tau
)d\tau .  \label{E100a}
\end{equation}
Taylor's expansion for the polynomial $\mathcal{F}_{i}:\mathcal{L}%
_{n}^{p}\rightarrow \mathbb{R}$ yields 
\begin{equation}
a_{i}(t)=a_{i}(0)+\int_{0}^{t}\mathcal{F}_{i}(a+\mathcal{Z}^{s})(\tau )d\tau
+R^{(1)}_i(t),  \label{E10b}
\end{equation}
where $R^{(1)}(t)$ is given by 
\begin{equation*}
R^{(1)}_i(t)=\sum_{|\ell| \geq 1}P_{c}\int_{0}^{t}\frac{D^{\ell }\mathcal{F}%
_{i}(a+\mathcal{Z}^{s})}{\ell !}(y+R)^{\ell }d\tau .
\end{equation*}%
We see later that $R^{(1)}$\ is small, as all terms contain at least
one $R=\mathcal{O}(\varepsilon ^{2-m\kappa })$.  Taylor's expansion
again for the polynomial  $\mathcal{F}_{i}:\mathcal{L}_{n}^{p}\rightarrow 
\mathbb{R}$, yields 
\begin{equation*}
a_{i}(t)=a_{i}(0)+\sum_{|\ell| \geq 0}P_{c}\int_{0}^{t}\frac{D^{\ell }%
\mathcal{F}_{i}(a)}{\ell !}( \mathcal{Z}^{s}) ^{\ell }d\tau +R_{1}(t).
\label{E16}
\end{equation*}
Applying the Averaging-Lemma \ref{Lemmach3.4}, yields 
\begin{equation*}
a_{i}(t)=a_{i}(0)+\sum_{|\ell| \geq 0}\frac{C_{\ell }}{\ell !}%
\int_{0}^{t}D^{\ell }\mathcal{F}_{i}(a)d\tau +\mathcal{O}(\varepsilon
^{1-m_{i}\kappa })+R^{(1)}_i(t),  \label{AM}
\end{equation*}
where $C_{0}=1$ and $C_{\ell }=0$ if one $\ell _{i}$ is odd. Thus 
\begin{equation*}
a_{i}(t)=a_{i}(0)+\sum_{|\ell| =0,2,4,..}\frac{C_{\ell }}{\ell !}%
\int_{0}^{t}D^{\ell }\mathcal{F}_{i}(a)d\tau +\tilde{R}_i(t),
\end{equation*}
where 
$
\tilde{R}(t)=R^{(1)}(t)+\mathcal{O}(\varepsilon ^{1-m\kappa }).
$

To bound $\tilde{R}$ we use Lemmas \ref{Lemma3} and \ref{Lemma2} and
Assumption \ref{Poly}.
\end{proof}


\begin{definition}
Define the set $\Omega ^{\ast }\subset \Omega $ such that all the following
estimates hold on $\Omega ^{\ast }$%
\begin{equation}
\sup_{\lbrack 0,\tau ^{\ast }]}\Vert \psi -\mathcal{Q}\Vert _{\mathcal{L}%
_{n}^{p}}<C\varepsilon ^{2-m\kappa -\kappa }\text{ },  \label{eq30}
\end{equation}%
\begin{equation}
\sup_{\lbrack 0,\tau ^{\ast }]}\| \psi \| _{\mathcal{L}_{n}^{p}}<C%
\varepsilon ^{-\frac{3}{2}\kappa _{0}}\text{ },  \label{eq31}
\end{equation}%
\begin{equation}
\sup_{\lbrack 0,\tau ^{\ast }]}\Vert \tilde{R}\Vert _{\mathcal{L}%
_{n}^{p}}<C\varepsilon ^{1-2m\kappa -\kappa }\text{ ,}  \label{eq32}
\end{equation}%
and%
\begin{equation}
\sup_{\lbrack 0,T_{1}]}|b| \leq \tilde{C}_{0}\text{.}  \label{eq33}
\end{equation}
\end{definition}

%
%

\begin{proposition}
$\Omega ^{\ast }$ has approximately probability $1.$
\end{proposition}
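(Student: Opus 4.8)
The plan is to treat the four defining estimates (\ref{eq30})--(\ref{eq33}) one at a time, convert the $\mathcal{L}_n^p$-moment bounds already established into tail probabilities by Markov's inequality, and then combine the resulting exceptional sets with a union bound. Here ``approximately probability $1$'' is read in the sense of Definition \ref{def:O}: the goal is to show $\mathbb{P}\big((\Omega^*)^c\big)\le C_p\varepsilon^p$ for every $p\ge1$, with the moment order $q$ taken large depending on $p$. Throughout, all suprema run up to $\tau^*$, on which $\|u\|_{\mathcal{L}_n^{2m}}\le\varepsilon^{-\kappa}$, so the polynomial estimate of Assumption \ref{Poly} feeding the lemmas below is available uniformly.

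First I would dispose of (\ref{eq30}). Since $\psi(0)\in S$ we have $e^{\varepsilon^{-2}t\mathcal{A}}\psi(0)=e^{\varepsilon^{-2}t\mathcal{A}_s}\psi(0)$, so $\mathcal{Q}$ from (\ref{eq17}) is exactly the approximation controlled by Lemma \ref{Lemma1}. Taking $\rho=0$ there gives $\mathbb{E}\sup_{[0,\tau^*]}\|\psi-\mathcal{Q}\|_{\mathcal{L}_n^p}^q\le C\varepsilon^{(2-m\kappa)q}$, and Markov against the threshold $\varepsilon^{2-m\kappa-\kappa}$ leaves a positive gap of $\kappa q$ in the exponent, so this event fails with probability at most $C\varepsilon^{\kappa q}$. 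For (\ref{eq31}) I would quote Corollary \ref{Cor1}, giving $\mathbb{E}\sup_{[0,\tau^*]}\|\psi\|_{\mathcal{L}_n^p}^q\le C\varepsilon^{-\kappa_0}$; Markov against $\varepsilon^{-\frac32\kappa_0}$ produces the gap $\kappa_0(\tfrac32 q-1)$, positive for $q\ge1$. For (\ref{eq32}) I would use Lemma \ref{Lemma10}, which states $\tilde R=\mathcal{O}(\varepsilon^{1-2m\kappa-\kappa_0})$; Markov against $\varepsilon^{1-2m\kappa-\kappa}$ yields the gap $(\kappa-\kappa_0)q$, strictly positive precisely because $\kappa_0<\kappa$, as already imposed in Corollary \ref{Cor1}. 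Finally, (\ref{eq33}) is deterministic: Assumption \ref{Amp} bounds $\sup_{[0,T_1]}|b|$ by a fixed constant, so choosing $\tilde C_0$ at least that large makes this event certain.

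Since in each of the three stochastic cases the exponent gap is a fixed positive multiple of $q$ (up to the harmless $-1$ in the second), for any prescribed $p$ one simply picks $q$ large enough that the gap exceeds $p$. A union bound over the four exceptional events then gives $\mathbb{P}\big((\Omega^*)^c\big)\le C_p\varepsilon^p$, which is the claim.

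I do not expect a genuine obstacle here; the whole argument is a routine passage from moments to probabilities via Markov and a union bound. The only real care needed is the exponent bookkeeping that forces each gap to be strictly positive --- this is what dictates the constraint $\kappa_0<\kappa$ and the choice of first-case scaling $\rho=0$ when invoking Lemma \ref{Lemma1} --- together with the observation that all the feeding moment bounds are stated up to the same stopping time $\tau^*$, so that no additional localization is required to glue them.
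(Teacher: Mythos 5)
Your proposal is correct and follows essentially the same route as the paper: a union bound over the four defining events, Chebyshev/Markov applied to the high-moment bounds of Lemma \ref{Lemma1} (with $\rho=0$), Corollary \ref{Cor1} and Lemma \ref{Lemma10}, the deterministic bound of Assumption \ref{Amp} for the event involving $b$, and then choosing the moment order $q$ large depending on $p$ so that each exponent gap exceeds $p$. Your bookkeeping of the gaps ($\kappa q$, $\kappa_0(\tfrac32 q-1)$, $(\kappa-\kappa_0)q$) matches the paper's $\varepsilon^{q\kappa}+\varepsilon^{\frac12 q\kappa}+\varepsilon^{q(\kappa-\kappa_0)}$ up to inessential constants.
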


\begin{proof}
\begin{equation*}
\mathbb{P}(\Omega ^{\ast })\geq 1-\mathbb{P}(\sup_{[0,\tau ^{\ast }]}\Vert
\psi -\mathcal{Q}\Vert _{\mathcal{L}_{n}^{p}}\geq C\varepsilon ^{2-m\kappa
-\kappa })-\mathbb{P}(\sup_{[0,\tau ^{\ast }]}\| \psi \| _{\mathcal{L}%
_{n}^{p}}\geq C\varepsilon ^{-\frac{3}{2}\kappa _{0}})
\end{equation*}%
\begin{equation*}
-\mathbb{P}(\sup_{[0,\tau ^{\ast }]}\Vert \tilde{R}\Vert _{\mathcal{L}%
_{n}^{p}}\geq C\varepsilon ^{1-2m\kappa -\kappa })-\mathbb{P}%
(\sup_{[0,T_{1}]}|b| >\tilde{C}_{0}).
\end{equation*}%
Using Chebychev inequality and Lemmas \ref{Lemma1}, \ref{Lemma10} and
Corollary \ref{Cor1}, we obtain for $\kappa >\kappa _{0}$ and sufficiently
large $q>\frac{2p}{(\kappa -\kappa _{0})}>0$ 
\begin{eqnarray}
\mathbb{P}(\Omega ^{\ast }) &\geq &1-C[\varepsilon ^{q\kappa }+\varepsilon ^{%
\frac{1}{2}q\kappa }+\varepsilon ^{q(\kappa -\kappa _{0})}]-\mathbb{P}%
(\sup_{[0,T_{1}]}|b| >\tilde{C}_{0})  \notag \\
&\geq &1-C\varepsilon ^{\frac{1}{2}q(\kappa -\kappa _{0})}-\mathbb{P}%
(\sup_{[0,T_{1}]}|b| >\tilde{C}_{0})  \notag \\
&\geq &1-C\varepsilon ^{p},  \label{e34}
\end{eqnarray}%
where $\tilde{C}_{0}$\ is chosen sufficiently large ($\sup_{[0,T_{1}]}|b|
\leq C$ by Assumption \ref{Amp}).
\end{proof}

\begin{theorem}
\label{th1} Assume that Assumptions \ref{Poly} and \ref{Amp}\ hold. Suppose $%
a(0)=\mathcal{O}(1)$ and $\psi (0)=\mathcal{O}(1)$. Let $b$ be a solution of
(\ref{ODE})\ and $a$ as defined in (\ref{Ampli}). If the initial conditions
satisfy $a(0)=b(0)$, then for $\kappa <\frac{1}{2m+1}$ we obtain%
\begin{equation}
\sup_{t\in [ 0,T_{1}\wedge \tau ^{\ast }] } |a(t)-b(t)| \leq C\varepsilon
^{1-2m\kappa -\kappa } \text{\ \ \ on }\Omega ^{\ast },  \label{e33}
\end{equation}%
and%
\begin{equation}
\sup_{t\in [ 0,T_{1}\wedge \tau ^{\ast }] }\left\vert a(t)\right\vert \leq C%
\text{\ \ \ on }\Omega ^{\ast }.  \label{e33a}
\end{equation}
\end{theorem}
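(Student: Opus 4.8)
The plan is to treat this as a deterministic, pathwise comparison on $\Omega^*$ between the integral equation (\ref{Ampli}) for $a$ and the ODE (\ref{ODE}) for $b$, using a Gronwall estimate combined with a bootstrap that keeps $a$ bounded. First I would subtract the integral form of (\ref{ODE}) from (\ref{Ampli}). Since $b$ solves (\ref{ODE}) with the effective reaction $\mathcal{G}_i(b)=\sum_{|\ell|=2,4,\ldots}\frac{C_\ell}{\ell!}D^\ell\mathcal{F}_i(b)$, which is precisely the averaged term produced in Lemma \ref{Lemma10}, and since $a(0)=b(0)$, the difference $h:=a-b$ satisfies, componentwise,
\begin{equation*}
h_i(t)=\int_0^t\big[\mathcal{F}_i(a)-\mathcal{F}_i(b)\big]\,d\tau+\sum_{|\ell|=2,4,\ldots}\frac{C_\ell}{\ell!}\int_0^t\big[D^\ell\mathcal{F}_i(a)-D^\ell\mathcal{F}_i(b)\big]\,d\tau+\tilde{R}_i(t),
\end{equation*}
where the sums are finite because $\mathcal{F}$ is a polynomial of degree at most $m$ and each $C_\ell$ is a finite constant by (\ref{z10}).

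The difficulty is that on $\Omega^*$ the cut-off only controls $a$ crudely up to $\tau^*$, namely $|a|=|P_c u|\le C\|u\|_{\mathcal{L}_n^{2m}}\le C\varepsilon^{-\kappa}$, which is far too weak to bound the Lipschitz constants of the polynomials $\mathcal{F}_i$ and $D^\ell\mathcal{F}_i$ uniformly. I would therefore introduce the auxiliary stopping time
\begin{equation*}
\theta:=(T_1\wedge\tau^*)\wedge\inf\{t\ge0:|a(t)-b(t)|>1\}.
\end{equation*}
On $[0,\theta]$ the bound (\ref{eq33}) gives $|a|\le|b|+1\le\tilde{C}_0+1$, so all the polynomials appearing are Lipschitz on the ball $\{|x|\le\tilde{C}_0+1\}$ with a single constant $L$ depending only on $\tilde{C}_0$ and the finitely many $C_\ell$. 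Since $\tilde R\in\mathcal{N}$, this yields, for $t\le\theta$,
\begin{equation*}
|h(t)|\le L\int_0^t|h(\tau)|\,d\tau+C\sup_{[0,\tau^*]}\|\tilde{R}\|_{\mathcal{L}_n^p}.
\end{equation*}

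Next I would apply Gronwall's inequality together with the error bound (\ref{eq32}), $\sup_{[0,\tau^*]}\|\tilde{R}\|_{\mathcal{L}_n^p}<C\varepsilon^{1-2m\kappa-\kappa}$, to obtain on $\Omega^*$
\begin{equation*}
\sup_{t\in[0,\theta]}|a(t)-b(t)|\le C\varepsilon^{1-2m\kappa-\kappa}\,e^{LT_0}.
\end{equation*}
The final step is to close the bootstrap: because the exponent $1-2m\kappa-\kappa=1-(2m+1)\kappa$ is strictly positive for $\kappa<\frac{1}{2m+1}$, the right-hand side is strictly below $1$ for all sufficiently small $\varepsilon$, so by continuity of $t\mapsto a(t)-b(t)$ the threshold defining $\theta$ is never attained and hence $\theta=T_1\wedge\tau^*$. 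This gives (\ref{e33}), and (\ref{e33a}) follows from the triangle inequality $|a|\le|b|+|a-b|\le\tilde{C}_0+C\varepsilon^{1-2m\kappa-\kappa}\le C$. The main obstacle is precisely this bootstrap: one must run the continuity argument so that the Lipschitz constant feeding Gronwall depends only on the a priori bound $\tilde{C}_0$ for $b$ and not on the crude $\varepsilon^{-\kappa}$ bound for $a$, and I expect the only delicate point to be verifying that $\theta$ is well defined and that the comparison estimate genuinely holds up to it.
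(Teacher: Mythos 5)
Your proposal is correct and follows essentially the same route as the paper: subtract the two equations, run a Gronwall estimate whose Lipschitz constant is controlled by a bootstrap keeping the difference below $1$ (so that only the a priori bound $\tilde{C}_0$ on $b$ from (\ref{eq33}) enters), then close the bootstrap using the positivity of $1-(2m+1)\kappa$ and finish with the triangle inequality. The only cosmetic difference is that the paper first peels off the error by setting $Q=h-\tilde{R}$ and works with a differential inequality for $|Q|^2$ under the condition $|Q|<1$, whereas you keep the integral form for $h$ and encode the bootstrap via the stopping time $\theta$; both devices serve the same purpose.
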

We note that all norms in a finite dimensional space are equivalent. Thus
for simplicity of notation we always use the standard
Euclidean norm.

\begin{proof}
Subtracting (\ref{ODE})\ from (\ref{Ampli}) and defining 
\begin{equation}
h:=a-b,  \label{e36}
\end{equation}%
we obtain%
\begin{equation}
h(t)=\sum_{|\ell| =0,2,4,..}\frac{C_{\ell }}{\ell !}\int_{0}^{t}[D^{\ell }%
\mathcal{F}_{i}(h+b)-D^{\ell }\mathcal{F}_{i}(b)]d\tau +\tilde{R}(t),
\label{e37}
\end{equation}%
where the error $\tilde{R}$ is bounded by 
$
\tilde{R}=\mathcal{O}(\varepsilon ^{1-2m\kappa }). 
$

Define $Q$ as%
\begin{equation}
Q:=h-\tilde{R}.  \label{e35}
\end{equation}%
From Equation (\ref{e37}) we obtain%
\begin{equation*}
\partial _{t}Q=\sum_{|\ell| =0,2,4,..}\frac{C_{\ell }}{\ell !}D^{\ell }[%
\mathcal{F}_{i}(Q+\tilde{R}+b)-D^{\ell }\mathcal{F}_{i}(b)].
\end{equation*}%
Taking the scalar product $\left\langle Q,\cdot \right\rangle $\ on both
sides, yields%
\begin{equation*}
\frac{1}{2}\partial _{t}| Q| ^{2}=\sum_{\left\vert \ell \right\vert
=0,2,4,..}\frac{C_{\ell }}{\ell !}\left\langle D^{\ell }\mathcal{F}_{i}(Q+%
\tilde{R}+b)-D^{\ell }\mathcal{F}_{i}(b),Q\right\rangle .
\end{equation*}%
Using Young and Cauchy-Schwarz inequalities, where $\mathcal{F}$ is a
polynomial of degree $m$, we obtain%
\begin{equation}
\frac{1}{2}\partial _{t}| Q| ^{2}\leq C\left( 1+| Q| ^{m-1}+|\tilde{R}%
|^{m-1}+|b| ^{m-1}\right) \left( | Q| ^{2}+|\tilde{R}|^{2}\right) .
\label{e35a}
\end{equation}%
As long as $| Q| <1,$ using Equations (\ref{eq32}) and (\ref{eq33}), we
obtain for $\kappa <\frac{1}{2m+1}$ 
\begin{equation*}
\frac{1}{2}\partial _{t}| Q| ^{2}\leq c\left\vert Q\right\vert
^{2}+C\varepsilon ^{2-2(2m+1)\kappa }\text{ \ on \ }\Omega ^{\ast },
\end{equation*}%
Using Gronwall's lemma, we obtain for $t\leq\tau ^{\ast }\wedge T_{1}\leq
T_{0}$%
\begin{equation*}
\left\vert Q(t)\right\vert ^{2}\leq C\varepsilon ^{1-(2m+1)\kappa
}e^{2cT_{0}},
\end{equation*}%
and thus $\left\vert Q(t)\right\vert <1$ for $t\leq \tau ^{\ast }\wedge
T_{1} $. Taking supremum on $[0,\tau ^{\ast }\wedge T_{1}]$%
\begin{equation*}
\sup_{t\in \lbrack 0,\tau ^{\ast }\wedge T_{1}]}\left\vert Q(t)\right\vert
^{2}\leq C\varepsilon ^{1-(2m+1)\kappa }\text{ \ on \ }\Omega ^{\ast }.
\end{equation*}%
Hence, 
\begin{eqnarray}
\sup_{\lbrack 0,\tau ^{\ast }\wedge T_{1}]}\left\vert a-b\right\vert
&=&\sup_{[0,\tau ^{\ast }\wedge T_{1}]}|Q-\tilde{R}|\leq \sup_{\lbrack
0,\tau ^{\ast }\wedge T_{1}]}\left\vert Q(t)\right\vert +\sup_{[0,\tau
^{\ast }\wedge T_{1}]}|\tilde{R}|  \notag \\
&\leq &C\varepsilon ^{1-(2m+1)\kappa }\text{ \ on \ }\Omega ^{\ast }.
\label{E37}
\end{eqnarray}%
We finish the proof by using (\ref{e36}), (\ref{e35}) and%
\begin{equation*}
\sup_{\lbrack 0,\tau ^{\ast }\wedge T_{1}]}\left\vert a\right\vert \leq
\sup_{\lbrack 0,\tau ^{\ast }\wedge T_{1}]}\left\vert a-b\right\vert
+\sup_{[0,\tau ^{\ast }\wedge T_{1}]}|b| \leq C.
\end{equation*}
\end{proof}

Now we can collect the results obtained previously to prove the main result
of Theorem \ref{thm} and Corollary \ref{coll5} for the system of SPDE (\ref%
{eq1a}).

\begin{proof}[Proof of Theorem \protect\ref{thm}]
Using (\ref{E7}) and triangle inequality, we obtain%
\begin{eqnarray*}
\sup_{t\in \lbrack 0,T_{1}\wedge \tau ^{\ast }]}\Vert u(t)-b(t)-\mathcal{Q}%
(t)\Vert _{\mathcal{L}_{n}^{p}} &\leq &\sup_{[0,T_{1}\wedge \tau ^{\ast
}]}\Vert a-b\Vert _{\mathcal{L}_{n}^{p}}+\sup_{[0,T_{1}\wedge \tau ^{\ast
}]}\Vert \psi -\mathcal{Q}\Vert _{\mathcal{L}_{n}^{p}} \\
&\leq &C\sup_{[0,T_{1}\wedge \tau ^{\ast }]}\left\vert a-b\right\vert
+\sup_{[0,\tau ^{\ast }]}\Vert \psi -\mathcal{Q}\Vert _{\mathcal{L}_{n}^{p}}.
\end{eqnarray*}%
From (\ref{eq30}) and (\ref{e33}), we obtain 
\begin{equation*}
\sup_{t\in \lbrack 0,T_{1}\wedge \tau ^{\ast }]}\Vert u(t)-b(t)-\mathcal{Q}%
(t)\Vert _{\mathcal{L}_{n}^{p}}\leq C\varepsilon ^{1-(2m+1)\kappa }\;\ on\
\Omega ^{\ast }.
\end{equation*}%
Hence,%
\begin{equation*}
\mathbb{P}\Big( \sup_{t\in \lbrack 0,T_{1}\wedge \tau ^{\ast }]}\Vert
u(t)-b(t)-\mathcal{Q}(t)\Vert _{\mathcal{L}_{n}^{p}}>C\varepsilon
^{1-(2m+1)\kappa }\Big) \leq 1-\mathbb{P(}\Omega ^{\ast })\;.
\end{equation*}%
Using (\ref{e34}), yields (\ref{eq16}).
\end{proof}

\begin{proof}[Proof of Corollary \protect\ref{coll5}]
We note that by the semigroup estimate based on Assumptions \ref{asscoeff}
and Equation (\ref{E3})\ 
\begin{eqnarray*}
\Vert \psi (t)\Vert _{\mathcal{L}_{n}^{mp}} &\leq &\Vert e^{\varepsilon
^{-2}t\mathcal{A}}\psi (0)\Vert _{\mathcal{L}_{n}^{mp}}+\left\Vert \mathcal{Z%
}^{s}(t)\right\Vert _{\mathcal{L}_{n}^{mp}}+\frac{1}{\varepsilon ^{\rho }}%
\Big\|\int_{0}^{t}e^{\varepsilon ^{-2}\mathcal{A}_{s}(T-\tau )}\mathcal{F}%
^{s}(u)d\tau \Big\|_{\mathcal{L}_{n}^{mp}} \\
&\leq &e^{-\varepsilon ^{-2}t\omega }\left\Vert \psi (0)\right\Vert _{%
\mathcal{L}_{n}^{mp}}+\left\Vert \mathcal{Z}^{s}(t)\right\Vert _{\mathcal{L}%
_{n}^{mp}}+C\varepsilon ^{2-\rho }\sup_{\tau \in \left[ 0,\tau ^{\ast }%
\right] }(1+\left\Vert u\right\Vert _{\mathcal{L}_{n}^{pm}}^{m}),
\end{eqnarray*}
where we used Assumption \ref{Poly}. Thus by the definition of $\tau ^{\ast
} $ and the bounds on $\mathcal{Z}^{s}$\ (cf. (\ref{eq19a})) we obtain on $%
\Omega ^{\ast }$ 
\begin{equation*}
\sup_{t\in \lbrack 0,\tau ^{\ast }]}\left\Vert \psi (t)\right\Vert _{%
\mathcal{L}_{n}^{mp}}\leq C\varepsilon ^{-\kappa _{0}}.
\end{equation*}%
Thus from the Theorem \ref{th1}\ we derive%
\begin{equation*}
\Omega \supset \left\{ \tau ^{\ast }>T_{1}\right\} \supseteq
\{\sup_{[0,T_{1}\wedge \tau ^{\ast }]}\Vert u\Vert _{\mathcal{L}%
_{n}^{mp}}<\varepsilon ^{-\kappa }\}\supseteq \Omega ^{\ast }.
\end{equation*}
Hence, 
\begin{eqnarray*}
\sup_{t\in \lbrack 0,T_{1}]}\Vert u(t)-b(t)-\mathcal{Q}(t)\Vert _{\mathcal{L}%
_{n}^{mp}} &\leq &\sup_{[0,T_{1}]}\Vert a-b\Vert _{\mathcal{L}%
_{n}^{mp}}+\sup_{[0,T_{1}]}\Vert \psi -\mathcal{Q}\Vert _{\mathcal{L}%
_{n}^{mp}} \\
&\leq &C\sup_{[0,T_{1}]}\left\vert a-b\right\vert +\sup_{[0,\tau ^{\ast
}]}\Vert \psi -\mathcal{Q}\Vert _{\mathcal{L}_{n}^{mp}}.
\end{eqnarray*}%
Proceeding as in the proof of Theorem \ref{thm} we bound the error in $%
\mathcal{L}_{n}^{mp}.$
\end{proof}


\subsection{Application of Approximation Theorem I}

In this subsection we consider all examples with non-homogeneous Neumann
boundary condition on $[0,1]^{2}$. Here the eigenfunctions are%
\begin{equation*}
g_{k_{1},k_{2}}=\left\{ 
\begin{array}{ccl}
1 & \text{if} & k_{1}=k_{2}=0 \\ 
2\cos (\pi k_{1}x)\cos (\pi k_{2}y) & \text{if} & k_{1},k_{2}>0.%
\end{array}%
\right.
\end{equation*}%
The eigenvalues of the operator $-\mathcal{A}_{i}=-d_{i}(\partial
_{x}^{2}+\partial _{y}^{2})$ are $\lambda _{k_{1},k_{2}}=\pi
^{2}(k_{1}^{2}+k_{2}^{2})$. Define $f_{\ell }(z)$ as 
\begin{equation*}
f_{\ell }(z)=\left\{ 
\begin{array}{ccl}
1 & \text{if} & \ell =0 \\ 
\sqrt{2}\cos (\pi \ell z) & \text{if} & \ell >0.%
\end{array}%
\right.
\end{equation*}%
Now $g_{k}(x,y)=f_{k_{1}}(x)f_{k_{2}}(y)$ for $k\in \mathbb{N}_{0}^{2}$.


\subsubsection{Physical Application (Nonlinear Heat Eq.)}

The heat equation plays a significant role in several areas of science
including mathematics, probability theory and financial mathematics. In
probability theory for instance, the heat equation is used for studying
Brownian motion via the Fokker--Planck equation.

To apply our main Theorem \ref{thm}, we consider the following nonlinear
heat Equation with stochastic Neumann boundary condition. 
\begin{eqnarray}
\partial _{t}u &=&\varepsilon ^{-2}\left( \partial _{x}^{2}+\partial
_{y}^{2}\right) u+u-u^{3}\ \ \text{for }0\leq x\leq 1,\text{ }0\leq y\leq 1 
\notag \\
\partial _{x}u(t,x,0) &=&\sigma _{\epsilon }\partial _{t}W_{1}(t,x),\
\partial _{x}u(t,x,1)=\sigma _{\epsilon }\partial _{t}W_{2}(t,x)\ \text{for }%
x\in (0,1)\text{ \ \ \ \ }  \notag \\
\partial _{y}u(t,0,y) &=&\sigma _{\epsilon }\partial _{t}W_{3}(t,x),\
\partial _{y}u(t,1,y)=\sigma _{\epsilon }\partial _{t}W_{4}(t,y)\ \text{for }%
y\in (0,1).  \label{Heat}
\end{eqnarray}%
Define $W_{i}(t)$ for $i=1,2,3,4$ as $W_{i}(t)=\sum\limits_{j=1}^{\infty
}\alpha _{i,j}\beta _{i,j}(t)f_{j}$ and $\mathcal{N}=\{1\}$.

Our main Theorem \ref{thm} states that the solution of the nonlinear heat
equation (\ref{Heat}) with $\sigma _{\varepsilon }=\varepsilon ^{-1}$ is
well approximated by%
\begin{equation*}
u(t,x,y)=b(t)+\mathcal{Z}^{s}(t,x,y)+\mathcal{O}(\varepsilon ^{1-}),
\end{equation*}%
where $b$ is the solution of 
\begin{equation}
\partial _{t}b=(1-3C_{2})b-b^{3},  \label{Ampl}
\end{equation}%
and $C_{2}$ is a constant given by $C_{2}=\sum\limits_{k,j=1}^{\infty }\frac{%
\emph{q}_{k,j}}{\lambda _{k}+\lambda _{j}}P_{c}\left( g_{k}g_{j}\right) .$

We calculate%
\begin{equation*}
P_{c}\left( g_{k}g_{j}\right) =\left\{ 
\begin{array}{cl}
\frac{1}{2} & \text{if } k_{1}=j_{1},\text{ }k_{2}=j_{2} \\ 
0 & \text{otherwise,}%
\end{array}%
\right.
\end{equation*}%
and%
\begin{eqnarray*}
{q}_{k,j} &=&\delta _{k_{1},j_{1}}\alpha
_{1,k_{1}}^{2}f_{k_{2}}(0)f_{j_{2}}(0)+\delta _{k_{1},j_{1}}\alpha
_{2,k_{1}}^{2}f_{k_{2}}(1)f_{j_{2}}(1) \\
&&+\delta _{k_{2},j_{2}}\alpha _{3,k_{2}}^{2}f_{k_{1}}(0)f_{j_{1}}(0)+\delta
_{k_{2},j_{2}}\alpha _{4,k_{1}}^{2}f_{k_{1}}(1)f_{j_{1}}(1).
\end{eqnarray*}%
Thus%
\begin{equation*}
C_{2}=\frac{1}{2\pi ^{2}}\sum_{k_{1},k_{2}=1}^{\infty }\frac{1}{%
k_{1}^{2}+k_{2}^{2}}(\alpha _{1,k_{1}}^{2}+2\alpha _{2,k_{1}}^{2}+\alpha
_{3,k_{2}}^{2}+2\alpha _{4,k_{2}}^{2}).
\end{equation*}%
If we choose for any $\mu >0$ that $\alpha _{i,k}^{2}\leq C |k|^{-2\mu }$
for $i=1,\ldots,4$ and all $k\in\mathbb{N}$, then $C_{2}$ is finite and
furthermore, all summability conditions are satisfied.

Let us finally check the bound on $b$. Taking the product with $b$ on both
sides of (\ref{Ampl}), yields 
\begin{equation*}
\frac{1}{2}\partial _{t}|b| ^{2}=C|b| ^{2}-|b|^4 \leq C|b| ^{2}.
\end{equation*}%
Using Gronwall's lemma, we obtain for $0\leq t \leq T_0$ that%
\begin{equation*}
\sup_{[0,T_0]}|b|^{2}\leq |b(0)|^{2}e^{C T_0}.
\end{equation*}%
Thus Assumption \ref{Amp} is always true for deterministic initial
conditions if we choose $C_{0}$ sufficiently large.

\subsubsection{Chemical Application}

A simple archetypical example for a reaction-diffusion system is a cubic
auto-catalytic reaction between two chemicals according to the rule $%
A+B\rightarrow 2B$ with rate $r=\rho u_{1}u_{2}^{2}$.

Denoting by $u_{1}$ and $u_{2}$ the concentration of $A$ and $B$,
respectively. The two species satisfy the equations:%
\begin{equation}
\partial _{t}u_{1}=\frac{1}{\varepsilon ^{2}}\Delta u_{1}-\rho
u_{1}u_{2}^{2}\ \ \ \&\ \ \partial _{t}u_{2}=\frac{d}{\varepsilon ^{2}}%
\Delta u_{2}+\rho u_{1}u_{2}^{2}.  \label{Chim}
\end{equation}%
with respect to stochastic boundary conditions for $i=1,2$%
\begin{eqnarray}
\partial _{x}u_{i}(t,x,0) &=&\sigma _{\epsilon }\partial
_{t}W_{i_{1}}(t,x),\ \partial _{x}u_{i}(t,x,1)=\sigma _{\epsilon }\partial
_{t}W_{i_{2}}(t,x)\ \text{for }x\in (0,1)\text{ \ \ \ \ \ }  \notag \\
\partial _{y}u_{i}(t,0,y) &=&\sigma _{\epsilon }\partial
_{t}W_{i_{3}}(t,x),\ \partial _{y}u_{i}(t,1,y)=\sigma _{\epsilon }\partial
_{t}W_{i_{4}}(t,y)\ \text{for }y\in (0,1),  \label{Ch-bou}
\end{eqnarray}%
where $W_{i_{j}}(t)=\sum\limits_{k=1}^{\infty }\alpha _{i_{j},k}\beta
_{i_{j},k}(t)f_{k}$ for $j=1,\ldots,4,$ and $f_{k}$ defined as before.

\noindent We define $\mathcal{N}=\left\{ \left( 
\begin{array}{c}
1 \\ 
0%
\end{array}%
\right) ,\left( 
\begin{array}{c}
0 \\ 
1%
\end{array}%
\right) \right\} $ and take $\sigma _{\epsilon }=\varepsilon .$

\noindent Then our main theorem states that%
\begin{equation*}
u(t)=b(t)+\mathcal{Z}^{s}(t)+\mathcal{O}(\varepsilon ^{1-}),
\end{equation*}%
\begin{equation*}
\text{with }\quad u=\left( 
\begin{array}{c}
u_{1} \\ 
u_{2}%
\end{array}%
\right) ,\ \ b=\left( 
\begin{array}{c}
b_{1} \\ 
b_{2}%
\end{array}%
\right) \text{, and }\mathcal{Z}^{s}=\left( 
\begin{array}{c}
\mathcal{Z}_{1}^{s} \\ 
\mathcal{Z}_{2}^{s}%
\end{array}%
\right) ,
\end{equation*}%
where $b_{1}$ and $b_{2}$ are the solutions of%
\begin{eqnarray*}
\partial _{t}b_{1} &=&-\rho b_{1}b_{2}^{2}-\rho C_{2}b_{1} \\
\partial _{t}b_{2} &=&\rho b_{1}b_{2}^{2}+\rho C_{2}b_{1},
\end{eqnarray*}%
with%
\begin{equation*}
C_{2}=\frac{1}{2\pi ^{2}}\sum_{k_{1},k_{2}=1}^{\infty }\frac{1}{%
k_{1}^{2}+k_{2}^{2}}(\alpha _{2_{1},k_{1}}^{2}+2\alpha
_{2_{2},k_{2}}^{2}+2\alpha _{2_{3},k_{1}}^{2}+\alpha _{2_{4},k_{2}}^{2}).
\end{equation*}%
We note that high fluctuations in combination with fast diffusion lead to
effective new terms describing the transformation of $b_{1}$\ to $b_{2}$.
Although both terms individually do not change the average $\int
u_{i}dx=b_{i}$, their nonlinear combination does.

Let us check the bound on $b$ from Assumption \ref{Amp}. We note that%
\begin{equation*}
\sum_{i=1}^{2}\partial _{t}b_{i}=0 \quad\text{and thus }\quad
\sum_{i=1}^{2}b_{i}(t)=\sum_{i=1}^{2}b_{i}(0)=C_{0}.
\end{equation*}%
As $b_{1}(t)\geq 0$ and thus $b_{2}(t)\geq b_{2}(0)\geq 0,$ we have 
$
0\leq b_{i}(t)\leq \sum\limits_{i=1}^{2}b_{i}(t)\leq C_{0}.
$

\noindent 
Hence, for all times $t>0$ we obtain
$
\left\Vert b(t)\right\Vert =\Big( \sum\limits_{i=1}^{2}b_{i}^{2}(t)\Big)^{1/2}\leq
C_{0}\sqrt{2}.
$

\section{Proof of the Approximation Theorem II}

In this section, we use many ideas and lemmas of the previous sections, as
the main ideas are similar.

\begin{lemma}
\label{Lemma11}Let Assumption \ref{Poly} holds. Then
for $R^{(2)}$  defined in (\ref{E11b}) as%
\begin{equation*}
R^{(2)}_i=\sum_{|\ell| \geq 1}\int_{0}^{t}\frac{D^{\ell }\mathcal{F}_{i}(a)}{%
\ell !}P_{c}\mathcal{(}\varepsilon \psi )^{\ell }d\tau 
\end{equation*}
we have $R^{(2)}=\mathcal{O}(\varepsilon ^{1-m\kappa }).$
\end{lemma}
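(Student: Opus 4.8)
The plan is to exploit three features of $R_i^{(2)}$: the polynomial structure of $\mathcal{F}_i$ truncates the sum to finitely many terms, each surviving term carries an explicit factor $\varepsilon^{|\ell|}$ with $|\ell|\geq 1$, and the two remaining (random) factors are already controlled by the a priori bounds on $a$ and $\psi$. No new estimate is needed beyond Corollary \ref{Cor1} and the stopping-time bound.

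First I would reduce to a finite sum. Since $\mathcal{F}_i$ is a polynomial of degree at most $m$ (Assumption \ref{Poly}), we have $D^{\ell}\mathcal{F}_i\equiv 0$ whenever $|\ell|>m$, so only $1\leq|\ell|\leq m$ contribute. As $a\in\mathcal{N}$ is spatially constant, the number $D^{\ell}\mathcal{F}_i(a)$ may be pulled out of the average $P_c$, and each term becomes
\[
\int_0^t\frac{D^{\ell}\mathcal{F}_i(a)}{\ell!}\,P_c(\varepsilon\psi)^{\ell}\,d\tau=\frac{\varepsilon^{|\ell|}}{\ell!}\int_0^t D^{\ell}\mathcal{F}_i(a)\,P_c(\psi^{\ell})\,d\tau,
\]
which isolates the deterministic gain $\varepsilon^{|\ell|}\geq\varepsilon$.

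Next I would bound the two remaining factors pathwise on $[0,\tau^{\ast}]$. The coefficient $D^{\ell}\mathcal{F}_i(a)$ is a polynomial in $a$ of degree at most $m-|\ell|$; since $a=P_cu$ we have $|a|\leq C\|u\|_{\mathcal{L}_{n}^{2m}}\leq C\varepsilon^{-\kappa}$ on $[0,\tau^{\ast}]$ by the definition of the stopping time $\tau^{\ast}$, whence $|D^{\ell}\mathcal{F}_i(a)|\leq C(1+|a|^{m-|\ell|})\leq C\varepsilon^{-(m-|\ell|)\kappa}$. For the average, the generalized Hölder inequality on the bounded domain $G$ gives $|P_c(\psi^{\ell})|\leq C\|\psi\|_{\mathcal{L}_{n}^{|\ell|}}^{|\ell|}\leq C\|\psi\|_{\mathcal{L}_{n}^{m}}^{|\ell|}$. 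Collecting, and using $|\ell|\geq1$ together with the elementary inequality $|\ell|-(m-|\ell|)\kappa\geq 1-(m-1)\kappa$, one gets on $[0,\tau^{\ast}]$
\[
|R_i^{(2)}|\leq C\sum_{1\leq|\ell|\leq m}\varepsilon^{\,|\ell|-(m-|\ell|)\kappa}\|\psi\|_{\mathcal{L}_{n}^{m}}^{|\ell|}\leq C\varepsilon^{1-(m-1)\kappa}\sum_{1\leq|\ell|\leq m}\|\psi\|_{\mathcal{L}_{n}^{m}}^{|\ell|}.
\]
Taking $\mathbb{E}\sup_{[0,\tau^{\ast}]}(\cdot)^p$ of this finite sum, it then suffices to control $\mathbb{E}\sup\|\psi\|_{\mathcal{L}_{n}^{m}}^{|\ell|p}$ for each $\ell$, which by Corollary \ref{Cor1} (valid for $\rho=1$), the embedding $\mathcal{L}_{n}^{p'}\hookrightarrow\mathcal{L}_{n}^{m}$ for $p'\geq m$, and Jensen's inequality is $\mathcal{O}(\varepsilon^{-\delta})$ for any $\delta>0$. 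Hence $R_i^{(2)}=\mathcal{O}(\varepsilon^{1-(m-1)\kappa-\delta})$, and choosing $\delta\leq\kappa$ gives $1-(m-1)\kappa-\delta\geq 1-m\kappa$, so $R^{(2)}=\mathcal{O}(\varepsilon^{1-m\kappa})$.

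I expect the only genuine obstacle to be the final moment step: Corollary \ref{Cor1} controls the $\mathcal{L}_{n}^{p}$-norm only in its own $p$-th moment and only up to the harmless loss $\varepsilon^{-\kappa_0}$, so one must transfer this to an arbitrary moment of the \emph{fixed} $\mathcal{L}_{n}^{m}$-norm while keeping the accumulated loss below the margin $\kappa$ that separates $1-(m-1)\kappa$ from the claimed exponent $1-m\kappa$; choosing $\kappa_0$ sufficiently small makes this work. Everything else is the deterministic bookkeeping of $\varepsilon$-powers coming from the factor $\varepsilon^{|\ell|}$ against the polynomial degree $m-|\ell|$ of $D^{\ell}\mathcal{F}_i$.
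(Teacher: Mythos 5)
Your proposal is correct and follows essentially the same route as the paper's proof: truncate the Taylor sum at $|\ell|=m$ by the polynomial structure, extract the factor $\varepsilon^{|\ell|}$, bound $|D^{\ell}\mathcal{F}_i(a)|\leq C(1+|a|^{m-|\ell|})\leq C\varepsilon^{-(m-|\ell|)\kappa}$ via the stopping time $\tau^{\ast}$, estimate $P_c(\psi^{\ell})$ by H\"older through $\|\psi\|_{\mathcal{L}_n^{|\ell|}}^{|\ell|}$, and control the remaining moments of $\psi$ by Corollary \ref{Cor1}. Your exponent bookkeeping ($1-(m-1)\kappa-\delta$ with $\delta\leq\kappa$) is in fact slightly sharper than the paper's direct $\ell-m\kappa\geq 1-m\kappa$, and your explicit attention to transferring the Corollary \ref{Cor1} moment bound to the fixed $\mathcal{L}_n^{m}$-norm fills in a step the paper leaves implicit.
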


\begin{proof}
Using Assumption \ref{Poly}%
\begin{eqnarray*}
\mathbb{E}\sup_{[ 0,\tau ^{\ast }] }\vert R_i^{(2)}\vert^{p} &\leq
&C\sum_{|\ell| \geq 1}\frac{1}{\ell !} \mathbb{E}\sup_{[ 0,\tau ^{\ast
}]}\int_{0}^{t}|D^{\ell }\mathcal{F}_{i}(a)| \Vert \varepsilon \psi\Vert_{%
\mathcal{L}_{n}^{|\ell|}}^{|\ell|p} d\tau \\
&\leq &C\sum_{|\ell| \geq 1}\frac{1}{\ell !}[1+\varepsilon ^{(\ell
-m)p\kappa }]\varepsilon ^{\ell p(1-\kappa )} \\
&\leq &C\varepsilon ^{1-m\kappa }.
\end{eqnarray*}
\end{proof}


\begin{definition}
Define the set $\overset{\ast \ast }{\Omega }\subset \Omega $ such that for
sufficiently large $\zeta \gg 1$ all the following estimates hold on $%
\overset{\ast \ast }{\Omega }$ 
\begin{equation}
\sup_{\lbrack 0,\tau ^{\ast }]}\Vert \psi -\mathcal{Q}\Vert _{\mathcal{L}%
_{n}^{p}}<C\varepsilon ^{1-m\kappa -\kappa }\text{ },  \label{e39}
\end{equation}%
\begin{equation}
\sup_{\lbrack 0,\tau ^{\ast }]}\| \psi \| _{\mathcal{L}_{n}^{p}}<C%
\varepsilon ^{-\frac{3}{2}\kappa _{0}}\text{ },  \label{e40}
\end{equation}%
\begin{equation}
\sup_{\lbrack 0,\tau ^{\ast }]}| R^{(2)}|<C\varepsilon ^{1-m\kappa -\kappa }%
\text{ ,}  \label{e41}
\end{equation}%
and%
\begin{equation}
\sup_{\lbrack 0,T_{1}]}|b| ^{m-1}\leq \ln (\varepsilon ^{-\frac{1}{\zeta }})%
\text{.}  \label{e42}
\end{equation}
\end{definition}


\begin{proposition}
$\overset{\ast \ast }{\Omega }$ has approximately probability $1$.
\end{proposition}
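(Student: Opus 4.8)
The plan is to follow the proof of the corresponding proposition for $\Omega^*$ in Case 1 in structure, replacing the deterministic control of $b$ by its probabilistic counterpart. I would bound the probability of the complement of $\overset{\ast\ast}{\Omega}$ by a union bound over the four events failing (\ref{e39})--(\ref{e42}), writing
\begin{align*}
\mathbb{P}(\overset{\ast\ast}{\Omega})\geq{}& 1-\mathbb{P}\big(\sup_{[0,\tau^*]}\|\psi-\mathcal{Q}\|_{\mathcal{L}_n^p}\geq C\varepsilon^{1-m\kappa-\kappa}\big)-\mathbb{P}\big(\sup_{[0,\tau^*]}\|\psi\|_{\mathcal{L}_n^p}\geq C\varepsilon^{-\frac32\kappa_0}\big)\\
&-\mathbb{P}\big(\sup_{[0,\tau^*]}|R^{(2)}|\geq C\varepsilon^{1-m\kappa-\kappa}\big)-\mathbb{P}\big(\sup_{[0,T_1]}|b|^{m-1}>C\ln(\varepsilon^{-1/\zeta})\big),
\end{align*}
and estimate the first three probabilities by Chebyshev's inequality at a high moment order $q$, using the $\mathcal{O}$-bounds already established.

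For the first probability I would apply Lemma \ref{Lemma1} in the present regime $\rho=1$, which yields $\mathbb{E}\sup_{[0,\tau^*]}\|\psi-\mathcal{Q}\|_{\mathcal{L}_n^q}^q\leq C\varepsilon^{(1-m\kappa)q}$; since the threshold is $\varepsilon^{1-m\kappa-\kappa}$, Chebyshev produces a bound of order $\varepsilon^{\kappa q}$ (using the embedding $\mathcal{L}_n^q\hookrightarrow\mathcal{L}_n^p$ on the bounded domain to pass from the fixed spatial exponent $p$ to the high moment/exponent $q$). The third probability is identical with Lemma \ref{Lemma11}, i.e.\ $R^{(2)}=\mathcal{O}(\varepsilon^{1-m\kappa})$, against the same threshold, again giving $\varepsilon^{\kappa q}$. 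For the second probability I would use Corollary \ref{Cor1}, $\mathbb{E}\sup_{[0,\tau^*]}\|\psi\|_{\mathcal{L}_n^q}^q\leq C\varepsilon^{-\kappa_0}$; the factor $\tfrac32$ in the threshold $\varepsilon^{-\frac32\kappa_0}$ supplies a margin so that Chebyshev gives a bound of order $\varepsilon^{\frac32\kappa_0 q-\kappa_0}$, which is arbitrarily small for $q$ large.

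The genuinely new ingredient, and the main obstacle, is the fourth term. In Case 1 the amplitude $b$ solved the deterministic ODE (\ref{ODE}) and could be bounded pathwise via Assumption \ref{Amp}; here $b$ solves the stochastic ODE (\ref{SODE}) and is therefore random and potentially large, so no deterministic bound is available. Instead, the estimate (\ref{e42}) is exactly the hypothesis (\ref{E4b}) of Assumption \ref{Amp1}, which gives $\mathbb{P}\big(\sup_{[0,T_1]}|b|^{m-1}>C\ln(\varepsilon^{-1/\zeta})\big)\leq\varepsilon^{\delta\kappa}$. This term is the one that cannot be driven below $\varepsilon^{\delta\kappa}$ by increasing the moment order, and it therefore fixes the rate. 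Choosing $q$ so large that both $\kappa q$ and $\frac32\kappa_0 q-\kappa_0$ exceed $\delta\kappa$, the first three contributions are absorbed and I conclude $\mathbb{P}(\overset{\ast\ast}{\Omega})\geq 1-C\varepsilon^{\delta\kappa}$, which is the precise meaning of ``approximately probability one'' compatible with the error bound of Theorem \ref{thm2}. The only care needed is the exponent bookkeeping: checking that each lemma is invoked with $\rho=1$ and that its $\mathcal{O}$-exponent, divided by the relevant threshold in (\ref{e39})--(\ref{e41}), leaves a strictly positive power of $\varepsilon$.
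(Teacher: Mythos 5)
Your proposal is correct and follows essentially the same route as the paper: a union bound over the four defining events, Chebyshev at a high moment order $q$ combined with Lemma \ref{Lemma1} (with $\rho=1$), Corollary \ref{Cor1} and Lemma \ref{Lemma11} for the first three probabilities, and Assumption \ref{Amp1} for the bound on $b$, which you correctly identify as the term fixing the final rate $\varepsilon^{\delta\kappa}$. The exponent bookkeeping also matches the paper's (up to the paper's slightly different but equivalent intermediate exponents for the second term).
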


\begin{proof}
\begin{equation*}
\mathbb{P}(\overset{\ast \ast }{\Omega })\geq 1-\mathbb{P}(\sup_{[0,\tau
^{\ast }]}\Vert \psi -\mathcal{Q}\Vert _{\mathcal{L}_{n}^{p}}\geq
C\varepsilon ^{1-m\kappa -\kappa })-\mathbb{P}(\sup_{[0,\tau ^{\ast }]}\Vert
\psi \Vert _{\mathcal{L}_{n}^{p}}\geq C\varepsilon ^{-\frac{3}{2}\kappa
_{0}})
\end{equation*}%
\begin{equation*}
-\mathbb{P}(\sup_{[0,\tau ^{\ast }]}| R^{(2)}|\geq C\varepsilon ^{1-m\kappa
-\kappa })-\mathbb{P}(\sup_{[0,T_{1}]}|b| ^{m-1}>\ln (\varepsilon ^{-\frac{1%
}{\zeta }})).
\end{equation*}%
Using Chebychev inequality and Lemmas \ref{Lemma1}, \ref{Lemma11} and
Corollary \ref{Cor1}, we obtain for $\kappa >\kappa _{0}$ and sufficiently
large $q>\frac{2p}{(\kappa -\kappa _{0})}>0$%
\begin{eqnarray}
\mathbb{P}(\overset{\ast \ast }{\Omega }) &\geq &1-C[\varepsilon ^{q\kappa
}+\varepsilon ^{\frac{1}{2}q\kappa }+\varepsilon ^{q(\kappa -\kappa _{0})}]-%
\mathbb{P}(\sup_{[0,T_{1}]}|b| ^{m-1}>\ln (\varepsilon ^{-\frac{1}{\zeta }}))
\notag \\
&\geq &1-C\varepsilon ^{\frac{1}{2}q(\kappa -\kappa _{0})}-\mathbb{P}%
(\sup_{[0,T_{1}]}|b| ^{m-1}>\ln (\varepsilon ^{-\frac{1}{\zeta }}))  \notag
\\
&\geq &1-C\varepsilon ^{\delta \kappa }\;,  \label{e56}
\end{eqnarray}%
where we used Assumption \ref{Amp1}.
\end{proof}

\begin{theorem}
\label{th2}Assume that Assumptions \ref{Wiener}, \ref{Poly} and \ref{Amp1}\
hold. Suppose $a(0)=\mathcal{O}(1)$ and $\psi (0)=\mathcal{O}(1)$. Let $b\in 
\mathcal{N}$ be a solution of (\ref{SODE})\ and $a\in \mathcal{N}$ as
defined in (\ref{Ampl2}). If the initial conditions satisfy $a(0)=b(0)$,
then for $\kappa <\frac{1}{m+2}$ we obtain 
\begin{equation}
\sup_{t\in \left[ 0,\tau ^{\ast }\wedge T_{1}\right] }\left\vert
a(t)-b(t)\right\vert \leq C\varepsilon ^{1-(m+2)\kappa }\text{\ \ \ on }%
\overset{\ast \ast }{\Omega }.  \label{e43}
\end{equation}
\end{theorem}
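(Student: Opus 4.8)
The plan is to mirror the proof of Theorem \ref{th1}, exploiting the fact that in this second case the dominant noise enters $a$ and $b$ \emph{identically}, so it cancels in the difference. Subtracting the limiting equation (\ref{SODE}) from the amplitude equation (\ref{Ampl2}) and using the initial condition $a(0)=b(0)$, the Wiener terms $\tilde{W}_{i,0}(t)g_{0}$ coincide with $\tilde{\beta}_{i}(t)$ (which is precisely the projection of $W$ onto the constant modes). Hence, writing $h:=a-b$, the stochastic contributions drop out and
\[
h_{i}(t)=\int_{0}^{t}[\mathcal{F}_{i}(a)-\mathcal{F}_{i}(b)]\,d\tau +R_{i}^{(2)}(t),
\]
with $R^{(2)}$ from (\ref{E11b}). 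First I would set $Q:=h-R^{(2)}$, so that $\partial_{t}Q=\mathcal{F}(Q+R^{(2)}+b)-\mathcal{F}(b)$ and $Q(0)=0$.

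Next I would run an energy estimate exactly as in the derivation of (\ref{e35a}): take the scalar product with $Q$, and use that $\mathcal{F}$ is a polynomial of degree $m$ together with Young's and the Cauchy--Schwarz inequalities to obtain
\[
\tfrac{1}{2}\partial_{t}|Q|^{2}\le C\big(1+|Q|^{m-1}+|R^{(2)}|^{m-1}+|b|^{m-1}\big)\big(|Q|^{2}+|R^{(2)}|^{2}\big).
\]
Working on $\overset{\ast\ast}{\Omega}$ and arguing by continuation as long as $|Q|<1$, the first three factors are bounded, while by (\ref{e42}) the crucial term $|b|^{m-1}$ is controlled by $\ln(\varepsilon^{-1/\zeta})$. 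This reduces the inequality to $\partial_{t}|Q|^{2}\le c\,|Q|^{2}+c\,|R^{(2)}|^{2}$ with $c=C\big(1+\ln(\varepsilon^{-1/\zeta})\big)$.

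The main obstacle, and the only genuinely new point compared with Theorem \ref{th1}, is the logarithmically growing coefficient $c$. Applying Gronwall's lemma to the equation with $Q(0)=0$ produces a factor $e^{cT_{0}}$, and since $\ln(\varepsilon^{-1/\zeta})=\tfrac{1}{\zeta}|\ln\varepsilon|$ this contributes $\varepsilon^{-CT_{0}/\zeta}$ to the bound. Using $R^{(2)}=\mathcal{O}(\varepsilon^{1-(m+1)\kappa})$ on $\overset{\ast\ast}{\Omega}$ (cf.\ (\ref{e41})) and absorbing the prefactor $c$ itself into an arbitrarily small power of $\varepsilon$, I would obtain
\[
\sup_{[0,\tau^{\ast}\wedge T_{1}]}|Q|^{2}\le C\,\varepsilon^{-CT_{0}/\zeta}\,\varepsilon^{2(1-(m+1)\kappa)}.
\]

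Choosing $\zeta$ sufficiently large (depending on $\kappa$ and $T_{0}$), the extra negative power $\varepsilon^{-CT_{0}/\zeta}$ together with the logarithmic prefactor stays below $\varepsilon^{-2\kappa}$, which yields $\sup|Q|\le C\varepsilon^{1-(m+2)\kappa}$; for $\kappa<\tfrac{1}{m+2}$ this is indeed below $1$ for small $\varepsilon$, closing the continuation argument. Finally, $|a-b|=|Q+R^{(2)}|\le|Q|+|R^{(2)}|$ gives (\ref{e43}), the second term being of strictly higher order.
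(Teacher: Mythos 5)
Your proposal is correct and follows essentially the same route as the paper: subtract (\ref{SODE}) from (\ref{Ampl2}) so the boundary-noise terms cancel, set $Q=h-R^{(2)}$, derive the energy inequality of the form (\ref{e35a}), control $|b|^{m-1}$ via (\ref{e42}), and absorb the resulting $\varepsilon^{-cT_{0}/\zeta}$ Gronwall factor into one extra power $\varepsilon^{-\kappa}$ by taking $\zeta$ large. The only difference is presentational — you make explicit the cancellation of the Wiener terms and the conversion $\ln(\varepsilon^{-1/\zeta})=\tfrac{1}{\zeta}|\ln\varepsilon|$, which the paper leaves implicit.
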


\begin{proof}
We follow the same steps as in the proof of Lemma \ref{th1} until Equation (%
\ref{e35a}) to obtain 
\begin{equation*}
\frac{1}{2}\partial _{t}|Q|^{2}\leq C\left(
1+|Q|^{m-1}+|R^{(2)}|^{m-1}+|b|^{m-1}\right) \left(
|Q|^{2}+|R^{(2)}|^{2}\right) .
\end{equation*}%
As long as $|Q|<1,$ using Equations (\ref{e41}) and (\ref{e42}), we obtain%
\begin{equation*}
\frac{1}{2}\partial _{t}\left\vert Q(t)\right\vert ^{2}\leq c(1+\ln
(\varepsilon ^{-\frac{1}{\zeta }}))\left\vert Q(t)\right\vert
^{2}+C\varepsilon ^{2-2(m+1)\kappa }\text{ \ on \ }\overset{\ast \ast }{%
\Omega }.
\end{equation*}%
Using Gronwall's lemma, we obtain for $t\leq \tau ^{\ast }\wedge T_{1}\leq
T_{0}$%
\begin{eqnarray*}
\left\vert Q(t)\right\vert ^{2} &\leq &C\varepsilon ^{2-2(m+1)\kappa }\exp
(2c(1+\ln (\varepsilon ^{-\frac{1}{\zeta }}))T_{0}) \\
&\leq &Ce^{2cT_{0}}\varepsilon ^{2-2(m+1)\kappa -2\tilde{\kappa}},
\end{eqnarray*}%
where $\tilde{\kappa}=\frac{cT_{0}}{\zeta }.$ If we choose $\tilde{\kappa}%
\leq \kappa $ for sufficiently large $\zeta ,$ then $\left\vert
Q(t)\right\vert <1$ for $\kappa <\frac{1}{m+2}$ and small $\varepsilon $.
Taking supremum on $[0,\tau ^{\ast }\wedge T_{1}]$%
\begin{equation}
\sup_{t\in \lbrack 0,\tau ^{\ast }\wedge T_{1}]}\left\vert Q(t)\right\vert
\leq C\varepsilon ^{1-(m+2)\kappa }\text{ \ on \ }\overset{\ast \ast }{%
\Omega }.  \label{e45}
\end{equation}%
Hence,%
\begin{eqnarray*}
\sup_{\lbrack 0,\tau ^{\ast }\wedge T_{1}]}\left\vert a-b\right\vert 
&=&\sup_{[0,\tau ^{\ast }\wedge T_{1}]}|Q-R^{(2)}|\leq \sup_{\lbrack 0,\tau
^{\ast }\wedge T_{1}]}|Q|+\sup_{[0,\tau ^{\ast }\wedge T_{1}]}|R^{(2)}| \\
&\leq &C\varepsilon ^{1-(m+2)\kappa }\text{ \ on \ }\overset{\ast \ast }{%
\Omega }.
\end{eqnarray*}
\end{proof}

Now we can use the results obtained previously to prove the main result of
Theorem \ref{thm2} and Corollary \ref{coll5a} \ for the SPDE (\ref{eq1a}).

\noindent
\begin{proof}[Proof of Theorem \protect\ref{thm2}]
\!\! Similar steps than the proof of Theorem \ref{thm}.
\end{proof}

\noindent
\begin{proof}[Proof of Corollary \protect\ref{coll5a}]
\!\! Similar steps than the proof of Corollary \ref{coll5}.
\end{proof}


\subsection{Application of Approximation Theorem II}

In this subsection we apply our main Theorem \ref{thm2} to the nonlinear
heat equation (\ref{Heat}) and a cubic auto-catalytic reaction (\ref{Chim})
with $\sigma _{\varepsilon }=1$ and non-zero $\alpha _{k,0}$.

\subsubsection{Physical Application (Nonlinear Heat Eq.)}

Our main Theorem \ref{thm2} in this case states that the solution of (\ref%
{Heat}) takes the form 
\begin{equation*}
u(t)=b(t)+\mathcal{O}(\varepsilon ^{1-}),
\end{equation*}%
where $b$ is the solution of stochastic ordinary differential equation%
\begin{equation}
db=[b-b^{3}]dt+dB,  \label{e50}
\end{equation}%
and $B$\ is a $\mathbb{R}$-valued standard Brownian motion given by%
\begin{equation*}
B(t)=\alpha _{1,0}\beta _{1,0}(t)+\alpha _{2,0}\beta _{2,0}(t)+\alpha
_{3,0}\beta _{3,0}(t)+\alpha _{4,0}\beta _{4,0}(t).
\end{equation*}

\noindent To check the bound on $b$ consider $\exp \{\delta |b|^{2}\}$. We
note that%
\begin{equation}
d\exp \{\delta |b|^{2}\}=\delta \exp \{\delta |b|^{2}\}d|b|^{2}+\delta
^{2}\exp \{\delta |b|^{2}\}(d|b|^{2})^{2},  \label{e51}
\end{equation}%
and%
\begin{equation*}
d|b|^{2}=2b\cdot db+db\cdot db.
\end{equation*}%
From (\ref{e50}) we obtain for some constant $c>0$ 
\begin{eqnarray*}
d|b|^{2} &=&2|b|^{2}dt-2|b|^{4}dt+2b\cdot dB+dB\cdot dB \\
&=&(C+2|b|^{2}-2|b|^{4})dt+2b\cdot dB.
\end{eqnarray*}%
Substituting this into (\ref{e51}), yields%
\begin{eqnarray}
d\exp \{\delta |b|^{2}\} &=&\delta (C+(2+4\delta )|b|^{2}-2|b|^{4})\exp
\{\delta |b|^{2}\}dt+2\delta \exp \{\delta |b|^{2}\}b\cdot dB 
\nonumber\\
&\leq &c_{\delta }\exp \{\delta |b|^{2}\}dt+2\delta \exp \{\delta
|b|^{2}\}b\cdot dB.\label{e52}
\end{eqnarray}

Integrating from $0$ to $t$ and taking expectation, yields%
\begin{equation*}
\mathbb{E}\exp \{\delta |b(t)|^{2}\}\leq \mathbb{E}\exp \{\delta
|b(0)|^{2}\}+c_{\delta }\int_{0}^{t}\mathbb{E}\exp \{\delta |b|^{2}\}dt.
\end{equation*}%
As $\mathbb{E}\exp \{3\delta |b(0)|^{2}\}\leq C$ and applying Gronwall's
lemma, yields for $t\leq T_{1}$%
\begin{equation}
\sup_{\lbrack 0,T_{1}]}\mathbb{E}\exp \{\delta \left\vert b\right\vert
^{2}\}\leq C.  \label{e53}
\end{equation}%
With $3\delta $ instead of $\delta $, we have%
\begin{equation}
\sup_{\lbrack 0,T_{1}]}\mathbb{E}\exp \{3\delta \left\vert b\right\vert
^{2}\}\leq C.  \label{e53a}
\end{equation}%
Taking expectation after supremum on both sides of (\ref{e52}) to obtain%
\begin{eqnarray*}
\lefteqn{\mathbb{E}\sup_{t\in \lbrack 0,T_{1}]}\exp \{\delta |b(t)|^{2}\})}
\\
&\leq &\mathbb{E}\exp \{\delta |b(0)|^{2}\}+c_{\delta }\mathbb{E}\sup_{t\in
\lbrack 0,T_{1}]}\int_{0}^{t}\exp \{\delta |b(s)|^{2}\}ds \\
&&\qquad +2\delta \mathbb{E}\sup_{t\in \lbrack 0,T_{1}]}\int_{0}^{t}b(s)\exp
\{\delta |b(s)|^{2}\}dB(s) \\
&\leq &C+c_{\delta }\mathbb{E}\int_{0}^{T_{1}}\exp \{\delta
|b(s)|^{2}\}ds+2\delta \mathbb{E}\Big(\int_{0}^{T_{1}}b(s)^{2}\exp \{2\delta
|b(s)|^{2}\}ds\Big)^{1/2}.
\end{eqnarray*}%
Using (\ref{e53a}) together with $xe^{2\delta x}\leq Ce^{3\delta x}$ for all 
$x>0$, yields%
\begin{equation*}
\mathbb{E}\sup_{t\in \lbrack 0,T_{1}]}\exp \{\delta |b(t)|^{2}\}\leq C.
\end{equation*}%
Now, using Chebychev inequality%
\begin{equation*}
\mathbb{P}(\sup_{[0,T_{1}]}|b(t)|^{2}>\ln (\varepsilon ^{-\kappa }))\leq 
\frac{\mathbb{E}\sup_{t\in \lbrack 0,T_{1}]}\exp \left( \delta
|b(t)|^{2}\right) }{\exp \left( \delta \ln (\varepsilon ^{-\kappa })\right) }%
\leq C\varepsilon ^{\delta \kappa }.
\end{equation*}

\subsubsection{Chemical Application}

Our main theorem states that the solution of (\ref{Chim}) takes the form%
\begin{equation*}
u(t)=b(t)+\mathcal{O}(\varepsilon ^{1-}),
\end{equation*}
\begin{equation*}
\text{ with}\quad u=\left( 
\begin{array}{c}
u_{1} \\ 
u_{2}%
\end{array}%
\right) \ \text{ and }b=\left( 
\begin{array}{c}
b_{1} \\ 
b_{2}%
\end{array}%
\right) .
\end{equation*}%
In this case $b_{1}$ and $b_{2}$ are the solutions of%
\begin{equation}
db_{1}=-\rho b_{1}b_{2}^{2}dt+dB_{1}(t)\text{ \ \& \ }db_{2}=\rho
b_{1}b_{2}^{2}dt+dB_{2}(t),  \label{E50}
\end{equation}%
where%
\begin{equation*}
B_{i}(t)=\alpha _{i_{1},0}\beta _{i_{1},0}+\alpha _{i_{2},0}\beta
_{i_{2},0}(t)+\alpha _{i_{3},0}\beta _{i_{3},0}(t)+\alpha _{i_{4},0}\beta
_{i_{4},0}(t)\text{ \ for }i=1,2.
\end{equation*}
To verify the bound on $b$ define first the stopping $T_{1}$ as%
\begin{equation*}
T_{1}=T_{0}\wedge \inf \left\{ t>0:\exists \text{ }i\in \left\{ 1,2\right\}
:b_{i}(t)<0\right\} .
\end{equation*}%
This means that our approximation result is only true as long as the
concentrations $b_{i}$ are non-negative.

Now, we note that%
\begin{equation*}
\sum_{i=1}^{2}db_{i}=\sum_{i=1}^{2}dB_{i}.
\end{equation*}%
Integrating from $0$ to $t$, yields%
\begin{equation}
\sum_{i=1}^{2}b_{i}(t)=\sum_{i=1}^{2}b_{i}(0)+\sum_{i=1}^{2}B_{i}(t).
\label{e55}
\end{equation}%
Hence, up to $T_{1}$ we obtain%
\begin{equation*}
|b(t)| \leq
\sum_{i=1}^{2}b_{i}(t)=\sum_{i=1}^{2}b_{i}(0)+\sum_{i=1}^{2}B_{i}(t)\leq 
\sqrt{2}| B(t)| +\sqrt{2}|b(0)| ,
\end{equation*}%
where we used $\left( x^{2}+y^{2}\right) ^{1/2}\leq | x | +| y| \leq \sqrt{2}
( x^{2}+y^{2} ) ^{1/2}.$ Moreover,%
\begin{equation*}
|b(t)| ^{2}\leq 4| B(t)| ^{2}+4| b(0)|^{2}.
\end{equation*}%
Thus%
\begin{equation*}
\mathbb{E}\sup_{[ 0,T_{1}] }\exp \{ \delta |b| ^{2}\} \leq \mathbb{E}\sup_{[
0,T_{1}] }\exp \{ 4\delta |B| ^{2}\} \cdot 
\exp \{ 4\delta |b(0)| ^{2}\} \leq C, 
\end{equation*}
but only for sufficiently small $\delta .$ Using Chebychev inequality%
\begin{equation*}
\mathbb{P}(\sup_{[0,T_{1}]}|b(t)| ^{2}>\ln (\varepsilon ^{-\kappa }))\leq 
\frac{\mathbb{E}\sup_{t\in [ 0,T_{1}] }\left( \exp \left( \delta |b(t)|
^{2}\right) \right) }{\exp \left( \delta \ln (\varepsilon ^{-\kappa
})\right) }\leq C\varepsilon ^{\delta \kappa }.
\end{equation*}
So the probability is close, but not very close to $1$, as $\delta$ cannot
be arbitrarily large. %
%
%
%
%
%
%
%
%
%
%
%
%
%
%
%
%
%
%
%
%
%
%

\section*{Acknowledgements}

This work was supported by the Deutsche Forschungsgemeinschaft (DFG)
`Multiscale Analysis of SPDEs` (DFG BL535/9-2).


\begin{thebibliography}{99}
\bibitem{Dir1} E. Al\'{o}s and S. Bonaccorsi. Stochastic partial
differential equations with Dirichlet white-noise boundary conditions
Annales de l'institut Henri Poincar\'e (B) Probabilits et Statistiques,
38(2):125--154, (2002)

\bibitem{BD} D. Bl\"{o}mker and J. Duan. Predictability of the Burgers
dynamics under model uncertainty, Stochastic Differential Equations: Theory
and Applications, P. Baxendale and S. Lototsky (Eds.), p.71-90, World
Scientific, New Jersey, 2007.

\bibitem{Dir2} Z. Brzezniak, B. Goldys, S. Peszat and F. Russo. Second Order
PDEs with Dirichlet White Noise Boundary Condition. J. Evol. Equ. (to
appear).

\bibitem{Cerrai} S. Cerrai and M. Freidlin. Fast transport asymptotics for
stochastic RDE's with boundary noise, Ann. Probab. 39:369--405, (2011).

\bibitem{CH} R. Courant and D. Hilbert. Methoden der mathematischen Physik.
(Methods of mathematical physics). 4. Aufl. (German) Springer, (1993).

\bibitem{Grie} {D. Grieser}.{\ Uniform bounds for eigenfunctions of the
Laplacian on mani\-folds with boundary}, Commun. Partial Diff. Eq.,
1283--1299, (2002).

\bibitem{Prato} G. Da Prato and J. Zabczyk. Stochastic equations in infinite
dimensions. Vol. 44 of Encyclopedia of Mathematics and its Applications.
Cambridge University Press, Cambridge, (1992).

\bibitem{PratoZabczyk2} G. Da Prato and J. Zabczyk. Evolution equations with
white-noise boundary conditions. Stochastics Stochastics Rep. 42:167-182,
(1993).

\bibitem{PratoZabczyk1} G. Da Prato and J. Zabczyk. Ergodicity for
infinite-dimensional systems, volume 229 of London Mathematical Society
Lecture Note Series. Cambridge University Press, Cambridge, (1996).

\bibitem{henry} D. Henry. Geometric theory of semilinear parabolic
equations, Lecture Notes in Mathematics 840, Springer-Verlag, Berlin, (1981).

\bibitem{Langer} R. E. Langer. A problem in diffusion or in the flow of heat
for a solid in contact with a fluid. Tohoku Math. J. 35:260-275, (1932).

\bibitem{Lapidus} L. Lapidus and N. Amundson. Chemical reactor theory,
Prentice-Hall, (1977).

\bibitem{Mas} B. Maslowski, Stability of semilinear equations with boundary
and pointwise noise, Ann. Scuola Norm. Sup. Pisa Cl. Sci., 22(4):55--93,
(1995).

\bibitem{Runst} T. Runst and W. Sickel. Sobolev spaces of fractional order,
Nemytskij operators, and nonlinear partial differential equations. Walter de
Gruyter. Berlin. New York, (1996).

\bibitem{Sowers} R.B. Sowers, Multidimensional reaction-diffusion equations
with white noise boundary perturbations, Ann. Probab. 22:2071--2121, (1994).

\bibitem{Pazy} A. Pazy. Semigroups of linear operators and applications to
partial differential equations. Applied Mathematical Sciences, 44. New York
etc.: Springer-Verlag (1983).

\bibitem{Peixoto} J. P. Peixoto and A. H. Oort. Physics of climate.
Springer, New York, (1992).

\bibitem{mildweak} R. Schnaubelt, M. Veraar, Stochastic Equations with
Boundary Noise. Parabolic Problems. Progress in Nonlinear Differential
Equations and Their Applications, 80:609--629, (2011).

\bibitem{Vold} R. Vold and M. Vold. Colloid and interface chemistry,
Addison-Wesley, (1983).
\end{thebibliography}
\end{document}